\renewcommand*{\geq}{\geqslant}
\renewcommand*{\leq}{\leqslant}
\numberwithin{equation}{section}
\newcommand*{\abs}[1]{\lvert#1\rvert}
\newcommand*{\Sscr}{\mathcal S}
\newcommand*{\N}{\mathbb{N}}
\newcommand*{\R}{\mathbb{R}}
\renewcommand*{\d}{\mathrm{d}}
\DeclareMathOperator{\var}{var}
\DeclareMathOperator{\const}{const}
\def\wt{\widetilde}
\def\pas{\ \d P \mbox{-a.s.}}
\def\<{\left<}
\def\>{\right>}
\def\({\left(}
\def\){\right)}
\def\vp{\varphi}
\def\o{\omega}
\def\O{\Omega}
\def\9{\infty}
\def\R{\mathbb R}
\def\N{\mathbb N}
\def\shb{{\cal B}}
\def\shc{{\cal C}}
\def\she{{\cal E}}
\def\shf{{\cal F}}
\def\shg{{\cal G}}
\def\shh{{\cal H}}
\def\shm{{\cal M}}
\def\shs{{\cal S}}
\def\shy{{\cal Y}}
\def\shw{{\cal W}}
\newcommand{\norm}[1]{\left\| #1 \right\|}
\newtheorem{theo}{Theorem}[section]
\newtheorem{lemma}[theo]{Lemma}
\newtheorem{Assumption}[theo]{Assumption}
\newtheorem{prop}[theo]{Proposition}
\newtheorem{rem}[theo]{Remark}
\newtheorem{defi}[theo]{Definition}
\newcommand{\beqnar}{\begin{eqnarray*}}
\newcommand{\eeqnar}{\end{eqnarray*}}
\newcommand{\ba}{\begin{array}}
\newcommand{\ea}{\end{array}}
\newcommand{\halb}{\frac{1}{2}}
\begin{document}

\title{A stochastic Fokker-Planck equation and 
double probabilistic representation for the
stochastic  porous media type equation.}

\author{ Viorel Barbu (1), Michael R\"ockner (2)
and Francesco Russo (3) } 

\date{April 18th  2014}
\maketitle

\thispagestyle{myheadings}
\markright{Stochastic porous media with multiplicative noise.} 

{\bf Summary:} The purpose of the present paper consists in proposing 
and discussing a double probabilistic representation
for a porous media equation in the whole space
perturbed by a multiplicative colored noise.
For almost all random realizations $\omega$, one associates
a stochastic differential equation in law with random coefficients,
driven by an independent Brownian motion.
The key ingredient is a uniqueness lemma for a 
linear SPDE of Fokker-Planck type with measurable bounded
(possibly  degenerated) random coefficients.

{\bf Key words}: stochastic partial differential equations,
infinite volume, singular  porous media type equation,
double probabilistic representation, multiplicative noise,
singular random Fokker-Planck type equation.

{\bf2000  AMS-classification}: 35R60, 60H15, 60H30, 60H10, 60G46,
 35C99, 58J65, 82C31.



\begin{itemize}
\item[(1)] Viorel Barbu,
University Al.I. Cuza, Ro--6600 Iasi,
Romania.
\item[(2)] Michael R\"ockner,
Fakult\"at f\"ur Mathematik, 
Universit\"at   Bielefeld, 
\\ D--33615 Bielefeld, Germany
\item[(3)] Francesco Russo,
ENSTA ParisTech,
Unit\'e de Math\'ematiques appliqu\'ees,
 828, boulevard des Mar\'echaux,
F-91120 Palaiseau (France).

\end{itemize}

\vfill \eject

\section{Introduction}

\setcounter{equation}{0}

We consider a function $\psi: \R \rightarrow \R$ and
real functions $e^1, \ldots, e^N$ on $\R$, for some strictly positive integer 
$N > 0$. In the whole paper, the following Assumption will be in force.
\begin{Assumption}\label{E1.0}
\begin{itemize}
\item  $\psi : \R \rightarrow \R$ is such that
its restriction to ${\R_+}$
is monotone increasing, with   $\psi(0) = 0$.
\item 
$\vert \psi (u) \vert \le  {\rm const} \vert u \vert, \ u \ge 0.  $ \\
In particular,  $\psi$ is right-continuous at zero and $\psi(0) = 0$.
\item Let  $e^i \in  C^2_{\rm b}(\R), 0 \le i \le N,$ such that  they are $H^{-1}$-multipliers
in the sense that the maps $\varphi \mapsto \varphi e^i$
are continuous in the $H^{-1}$-topology.   $\shc(e^i)$
denotes the norm of this operator and we will call it {\bf multiplier norm}.
\end{itemize}
\end{Assumption}
 Let $ T > 0$ and $(\Omega, \shf, P)$,
be a fixed  probability space.
Let $(\shf_t, t \in [0,T])$ be a filtration
fulfilling the usual conditions and 
we suppose $\shf = \shf_T$.
Let $\mu(t,\xi), t \in [0,T], \xi \in \R,$ be a random field
of the type
$$ \mu(t,\xi) = \sum_{i=1}^N e^i (\xi) W^i_t + e^0(\xi) t,
 \ t \in [0,T], 
\xi \in \R,$$
where $W^i, 1 \le i \le N,$ are independent continuous
 $\shf_t)$-Brownian motions   on $(\Omega, \shf, P)$, 
 which are fixed from now on until the end of the paper.
For technical reasons we will sometimes set
$W^0_t \equiv t$.
We focus on a stochastic partial differential equation 
of the following type:
\begin{equation}
\label{PME}
\left \{
\begin{array}{ccl}
\partial_t X(t,\xi)&=& \frac{1}{2} \partial_{xx}^2(\psi(X(t, \xi) ) +
 X(t,\xi) \partial_t \mu(t, \xi),
\\
X(0,\d \xi)& = & x_0(\d \xi), 
\end{array}
\right.
\end{equation}
which holds in the sense of Definition \ref{DSPDE}, where $x_0$ is a
a given probability measure on $\R$.
The stochastic multiplication above is of It\^o type.
We look for a solution of (\ref{PME}) with time evolution in $L^1(\R)$.

\begin{rem}\label{Rint} 
\begin{enumerate}
\item With $\psi$ we can naturally associate an odd increasing function
$\psi_0: \R \rightarrow \R$ such that $\psi_0(u) = - \psi_0(-u)$
for every $u \in \R$.  
\item By the usual technique of {\it filling the gap,}  $\psi$ 
can be associated with a graph, i.e. a multivalued function
$ \R \mapsto 2^\R$,
still denoted by the same letter, by
 setting $\psi(u)  = [\psi(u-), \psi(u+)]$.
\end{enumerate}
 \end{rem}
Since $\psi$ restricted to $\R_+ $ is monotone, Assumption \ref{E1.0} implies
   $ \psi (u) = \Phi^2(u) u, \ u  > 0$, $\Phi: \R^\star_+ \rightarrow \R$
   being a
non-negative Borel function which is  bounded on $\R^\star_+$. 
When $\psi (u) = \vert u \vert^{m-1}u$,
 $m  >   1$, (\ref{PME}) and $\mu \equiv 0$, 
\eqref{PME}
 is  nothing else but the
classical {\it porous media equation}.
When $\psi $ is a general increasing function
(and $\mu \equiv 0$),
there are several contributions to 
the analytical study of (\ref{PME}),
 starting from
\cite{BeBrC75} for existence,   \cite{BrC79} for  uniqueness in the
case of bounded solutions 
 and \cite{BeC81} 
for continuous dependence on the coefficients.
The authors consider  the case where $\psi$ is  continuous, even
if their arguments allow some extensions for the discontinuous case.
Those are the classical references when the space variable varies
on the real line. For equations in a bounded domain and Dirichlet
boundary conditions, for simplicity, we only refer to monographs,
e.g. \cite{vazquez, Show, barbu93, barbu10}.

As far as the stochastic porous media is concerned,
most of the work for existence and uniqueness
concerned the case of bounded domain, see for instance
\cite{BDR09, BDR09CMP, BDR08}.
The infinite volume case, i.e. when the underlying domain is $\R^d$, 
was fully analyzed in  \cite{Ren}, when
 $\psi$ is polynomially bounded
 (including the fast diffusion case) 
when the space dimension is $d \ge 3$,
more precisely, see \cite{Ren}, Theorem 3.9, Proposition 3.1 and 
Exemple 3.4.
To the best of  our knowledge, except for  \cite{Ren} 
and our companion paper  \cite{BRR3}, 
 this seems to be  the only work concerning a stochastic porous type equation
in infinite volume.




\begin{defi} \label{DNond} 
\begin{itemize} 
\item We will say that equation (\ref{PME}) (or $ \psi$) is 
{\bf non-degenerate} if on each compact,
 there is a constant $c_0 > 0$ such that
$ \Phi  \ge c_0  $.
\item We will say that equation (\ref{PME}) or $ \psi$ is 
{\bf degenerate} if  $ \lim_{u \rightarrow 0_+} \Phi(u) = 0  $
  in the sense
that for any sequence of non-negative reals $(x_n)$ converging
to zero, and $y_n \in \Phi(x_n)$ we have 
$\lim_{n \rightarrow \infty} y_n = 0$, see Remark \ref{Rint} 2.
\end{itemize} 
\end{defi}
 One of the typical examples of degenerate $\psi$ is the
case of $\psi$ being {\bf strictly increasing after some zero}.
This notion was introduced in \cite{BRR2} and it means the following.
There is $0 \le u_c $ such that $\psi_{[0,u_c]} \equiv 0$ and
$\psi$ is strictly increasing on $]u_c, +\infty[$.

\begin{rem}\label{DegNonD}
\begin{enumerate}
\item 
 $\psi$ is non-degenerate if and only if
$\liminf_{u \rightarrow 0+} \Phi(u)  ( = \lim_{u \rightarrow 0+} \Phi(u)) > 0 $.
\item Of course, if $\psi$ is strictly increasing  after 
some zero, with $u_c > 0$ then
 $\psi$ is degenerate.
 If  $\psi$ is degenerate, then
$\psi^\kappa (u) =  (\Phi^2(u) + \kappa) u $,
for every $\kappa > 0$, is non-degenerate.
In the sequel we  will set $\Phi(0) := \lim_{u \downarrow 0} \Phi(u)$.
In particular, if $\psi$ is degenerate we 
have $\Phi(0) = 0$.
 \end{enumerate}
\end{rem}
This paper will be devoted to both the case when $\psi$ 
is non-degenerate and the case when $\psi$ is 
degenerate.\\



One of the targets of the present paper
concerns the   probabilistic representation of solutions to \eqref{PME}
extending the results of \cite{BRR1, BRR2} which treated the 
deterministic case $\mu \equiv 0$.
In the deterministic case,
to the best of our knowledge the first author who considered a
probabilistic representation (of the type studied in this paper) for the
solutions of a non-linear deterministic PDE was McKean
\cite{mckean}, particularly in relation with the so called propagation of
chaos. In his case, however, the coefficients were smooth. From then on
the literature steadily grew and nowadays there is a vast amount of
contributions to the subject, see the reference 
list of \cite{BRR1, BRR2}.
A probabilistic representation
 when  $\beta(u) = \vert u \vert  u^{m-1}, m  > 1,$ was provided for
instance in \cite{BCRV}, in
 the case of the classical 
porous media equation. When $m < 1$, i.e. in the case of the fast
diffusion equation, \cite{BR} provides a probabilistic representation of the
so called {\bf Barenblatt solution}, i.e. the solution whose initial condition 
is concentrated at zero.
 
\cite{BRR1, BRR2} discussed the probabilistic representation
when $\mu = 0$  in the non-degenerate and degenerate case respectively, 
where $\psi$ also may have jumps. In the sequel of this introduction
we will suppose $\psi$ to be single-valued. 

In the case $\mu =  0$, the equation \eqref{PME}
models a non-linear phenomenon macroscopically.
Let us denote by $u:[0,T] \times \R \rightarrow \R$ the
solution of that equation.
The idea of the probabilistic representation is
to find a process $(Y_t, t \in [0,T])$ whose law 
at time $t$ has for  density $u(t,\cdot)$.

 $Y$ turns out to be the weak solution of the 
non-linear stochastic differential equation
\begin{equation}
\label{E1.2}
\left \{
\begin{array}{ccc}
Y_t &=& Y_0 + \int_0^t \Phi(u(s,Y_s)) dB_s,  \\
{\rm Law } (Y_t) &=& u(t,\cdot), \quad t \ge 0, \\
\end{array}
\right.
\end{equation}
where $B$ is a classical Brownian motion.
The behaviour of $Y$ is the microscopic counterpart of
the phenomenon described by (\ref{PME}), describing
the evolution of  a single particle, whose law 
behaves according to  (\ref{PME}).




The idea of this paper is to consider the case when $\mu \neq 0$.
 This includes the case when 
the $\mu$ is not vanishing but it is deterministic; it happens
when only $e^0$ is non-zero, and $e^i \equiv 0, 1 \le i \le n$.
 In this case our technique gives a sort of forward
Feynman-Kac formula for non-linear PDEs.

We introduce a double
stochastic representation (in a strong-weak probabilistic sense)
by means of introducing an enlarged probability space 
on which one can represent the solution of \eqref{PME} 
as the (generalized)-law (called $\mu$-law) of
a solution to a non-linear SDE.
Intuitively, it describes the microscopic aspect of the SPDE \eqref{PME}
for almost all quenched $\omega$. 
The terminology strong refers to the case that the 
probability space $(\Omega, \shf, P)$ on which the SPDE
is defined, will remain fixed.

We represent a solution $X$ to  \eqref{PME} making use of
another independent source of randomness described by another
probability space based on some set $\Omega_1$. 

The analog of the process $Y$, obtained 
when $\mu$ is zero in \cite{BRR2, BRR1},
is a doubly stochastic process, still denoted
by $Y$ defined on $(\Omega_1 \times \Omega, Q)$,
for which, $X$ constitutes the so-called
family of {\it $\mu$-marginal laws of $Y$.}
More precisely, for fixed $\omega \in \Omega$, the $\mu$-marginal 
law at time $t$
of  process $Y$ is given
by the positive finite Borel measure 
\begin{equation} \label{1.2ter}
 A \mapsto E^{Q^\omega}\left(1_{A}(Y_t) \she_t \left
(\int_0^\cdot \mu(ds, Y_s(\cdot,\omega))\right)
\right), 
\end{equation}
$\she$ denoting the Dol\'eans exponential,
where 
\begin{equation} \label{1.2bis}
\int_0^t \mu(ds, Y_s(\cdot, \omega)) = \sum_{i=0}^N
 \int_0^t e^i(Y_s(\cdot, \omega)) dW^i_s(\omega), t  \in [0,T],
\end{equation}
and where we assume that for some filtration $(\shg_t)$ on
$\Omega_1 \times \Omega$, $Y$ is $(\shg_t)$-adapted 
and $W^1, \ldots, W^N$ are $(\shg_t)$-martingales on 
$\Omega_1 \times \Omega$.
For fixed $ t \in [0,T]$ we also say that the
previous measure is the {\it $\mu$-law} of $Y_t$.
In the case $e^0 = 0$, the situation is the following.
 For each fixed $\omega \in \Omega$,
\eqref{1.2ter}  is a (random) non-negative measure
which is not a (random) probability. 
 However the expectation of its 
 total mass is indeed $1$.

The  double probabilistic representation is based on a simple idea.
Suppose there is a process $Y$ defined on a suitably enlarged
probability space $(\Omega_1 \times \Omega, Q)$ such that 
\begin{equation}
\label{DPIY}
\left \{
\begin{array}{ccc}
Y_t &=& Y_0 + \int_0^t \Phi(X(s,Y_s)) dB_s, \\
\mu-{\rm Law } (Y_t) &=& X(t,\xi) \d  \xi, \quad t \in ]0,T],\\
\mu-{\rm Law } (Y_0) &=& x_0(\d \xi), 
\end{array}
\right.
\end{equation}
where $B$ is a standard Brownian motion.
Then  $X$  solves the SPDE \eqref{PME}. This is the object of Theorem \ref{T32}.
Vice versa, if $X$ is a solution of \eqref{PME} then there is a process $Y$
solving \eqref{DPIY}, see Theorem \ref{T73}.

\begin{rem} \label{RDPIY}
\begin{enumerate}
\item If $X$ is a solution of \eqref{PME}, then \eqref{DPIY} implies that    
$ X \ge 0 \ dt \otimes dx \otimes dP$ a.e. 
\item Let $t \in [0,T]$. Let $\varphi: \R \rightarrow \R$ be
Borel and bounded. Then
$$ \int_\R \varphi(\xi)  X(\omega)(t,\xi) \d \xi = 
 E^{Q^\omega}\left(\varphi(Y_t(\omega)) \she_t\left(\int_0^\cdot \mu(ds, Y_s(\omega))\right)
\right).$$
So 
$$ \int_\R   X(\omega)(t,\xi) d\xi = 
 E^{Q^\omega} \left(\she_t \left(\int_0^\cdot \mu(ds, Y_s(\omega))\right)
\right).$$
Even though  for a.e. $\omega \in \Omega$,  the   previous expression
is not necessarily a probability measure, of course, 
$$ \nu_\omega: \varphi \mapsto \frac{\int_\R \varphi(\xi) 
 X(\o)(t,\xi) d\xi}
{\int_\R  X(\omega)(t,\xi) d\xi } $$
is one. It can be expressed as 
$$\nu_\omega(A) =  \frac{E^{Q^\omega}(1_A(Y_t) \she_t(M(\cdot, \omega)))}
{E^{Q^\omega} \she_t(M(\cdot,\omega))}, $$
where
$M_t(\cdot,\omega) = \int_0^t \mu(ds, Y_s(\cdot, \omega)), t \in [0,T],$
 is defined in \eqref{1.2bis}.
\item Consider the particular case $e_0 = 0, e_1 = c$, 
$c$ being some constant. In this case, the $\mu$-marginal laws are given by
\begin{eqnarray*}
A &\mapsto & E^{Q^\omega}(1_A(Y_t) c \she_t(W)) = 
 c \she_t(W) E^{Q^\omega}(1_A(Y_t)) \\ &=
& c \she_t(W) \nu_\omega(t,A)
\end{eqnarray*}
and $\nu_\omega(t,\cdot)$ is the law of $Y_t(\cdot,\omega)$ under 
$Q^\omega$.
\end{enumerate}
\end{rem}
\begin{rem} \label{RFiltering}
Item 2. of Remark \ref{RDPIY} has a filtering interpretation, see
e.g. \cite{pardoux} for a comprehensive introduction. \\
Suppose $e^0 = 0$.
Let $\hat Q$ be a probability on $(\Omega, \shg_T, \hat Q)$,
and consider the non-linear diffusion problem \eqref{E1.2}
as a basic dynamical phenomenon. We suppose now  
that there are N observations $Y^1,\ldots,Y^N$ related to  the process $Y$
generating a filtration $(\shf_t)$.
We suppose in particular 
that $\d Y^i_t = \d W^i_t + e^i(Y_t), 1 \le i \le N,$
and  $W^1, \ldots, W^N$ be $(\shf_t)$-Brownian motions.
Consider the following dynamical system of non-linear diffusion type:
\begin{equation}\label{EFiltering1}
\left \{
 \begin{array}{ccc}
Y_t &=& Y_0 + \int_0^t \Phi(X(s,Y_s))dB_s \\
\d Y^i_t &=& \d W^i_t + e^i(Y_s), 1 \le i \le N,\\
X(t,\cdot)&:& {\rm conditional \ law \ under} \ \shf_t.  
\end{array}
\right.
\end{equation}
The third equality of \eqref{EFiltering1} means, under $\hat Q$, that we have
\begin{equation} \label{EFiltering2}
\int_\R \varphi(\xi) X(t,\xi) \d \xi = 
E(\varphi(Y_t) \vert \shf_t).
\end{equation}
We remark that, under the new probability $Q$
defined by $\d Q = \d \hat Q \she(\int_0^T \mu(\d s, Y_s))$,
 $Y^1, \ldots, Y^N$ are standard $(\shf_t)$-independent Brownian motions.
Then \eqref{EFiltering2} becomes
\begin{eqnarray*}
\int_\R \varphi(\xi) X(t,\xi) \d \xi &=& E^{\hat Q}(\varphi(Y_t) \vert \shf_t) \\
&=&  \frac{E^{Q}(\varphi(Y_t)   \she_t (\int_0^\cdot \mu(ds, Y_s) \vert \shf_t ))}
{E^{Q}(\she_t (\int_0^\cdot \mu(ds, Y_s) \vert \shf_t))}.
 \end{eqnarray*}
Consequently by Theorem \ref{T32} $X$ will be the solution 
of the SPDE \eqref{PME}, with $x_0$ being the law of $Y_0$; so  
\eqref{PME} constitutes
the Zakai type equation associated with our filtering problem.
\end{rem}

The present approach has some vague links with the topic of random irregular 
media. In this case the macroscopic equation is a linear random partial 
differential equation with diffusion being $1$ and with a drift
which is the realisation of a Brownian motion $W$. Here the equation
has a random second term, and the diffusion term is non-linear.
Our type of stochastic differential equation  \eqref{E1.2} is
time inhomogeneous (and depending on the law
of the solution),  in contrast to the one of random media.
 The literature of random (even irregular) media is huge,
see for instance \cite{mathieu, hushi}.
We mention that a stochastic calculus approach in  this context
was developed in \cite{FRW1,FRW2,RTrutnau}.

The paper is organized as follows. After the present introduction,
Section \ref{S2} is devoted to preliminaries, to the notion
of a $\mu$-law associated  with a (doubly) stochastic process and 
to the notion of
weak-strong solution of a doubly random stochastic differential equation.
In 
Section \ref{S3} we define the notion of double probabilistic
representation and we
expose the main idea behind it.
 Section \ref{S4} shows that
the $\mu$-law of the solution of a {\it non-degenerate} weak-strong
stochastic differential equation, always admits a density for a.e. $\omega \in 
\Omega$. In Section \ref{S5} 
 a uniqueness theorem for an SPDE of Fokker-Planck type is
 formulated and proved, which is useful for the 
weak-strong probabilistic representation when $\psi$
is non-degenerate, but it has an interest
in itself.
Section \ref{S6}  shows existence and uniqueness
 of the double stochastic
representation of \eqref{PME} when $\psi$ is non-degenerate and
 finally Section \ref{S7}
provides the double probabilistic representation 
when $\psi$  is degenerate and 
 strictly increasing after some zero,
 in case  $\psi$ is Lipschitz.
We believe that the latter assumption can be generalized, which is the subject of future work.

\section{Preliminaries} 

\setcounter{equation}{0}

\label{S2}

First we introduce some basic recurrent notations.
$C_0^\infty(\R)$ is the space of smooth functions with compact support.
$H^{-1}(\R)$ is the classical Sobolev space.
$\shm(\R)$ (resp. $\shm_+(\R)$) denotes the space of
finite real (resp. non-negative) measures. 
 \\
We recall that  $\shs(\R)$ is the space of the Schwartz fast decreasing
test functions.  $\shs'(\R)$ is its dual, i.e. 
the space of Schwartz tempered distributions.
On $\shs'(\R)$, the map $(I-\Delta)^\frac{s}{2},  s \in \R,$ is well-defined.
For $s \in \R$, $H^s(\R)$ denotes the classical
Sobolev space consisting of all functions  $f \in \shs'(\R)$ such that 
$(I-\Delta)^\frac{s}{2} f \in L^2(\R)$.
We introduce   the norm
 $$\Vert f \Vert_{H^s} := \Vert I-\Delta)^\frac{s}{2} f \Vert_{L^2},$$
where $\Vert \cdot \Vert_{L^p}$ is the classical $L^p(\R)$-norm
for $ 1 \le p \le \infty$.
In the sequel, we will often simply denote $H^{-1}(\R)$, 
by  $H^{-1}$ and $L^2(\R)$  by $L^2$.
Furthermore, $W^{r,p}$ denote the classical Sobolev space of order $r \in \N$
in $L^p(\R)$ for $1 \le p \le \infty$. 
\begin{defi} \label{DMultipl}
Given a function $e$ belonging to $L^1_{\rm loc}(\R) \cap  \shs'(\R)$, we say that it is an
{\bf $H^{-1}$-multiplier}, if the map
$ \varphi \mapsto \varphi  e$ 
is continuous from $\shs(\R)$ to $H^{-1}$ 
with respect to the $H^{-1}$-topology on both spaces.
We remark that $  \varphi e$ is always 
 a well-defined Schwartz tempered distribution, 
 whenever $\varphi$ is a fast decreasing test function.
\end{defi}
Of course, any constant function is an  {\bf $H^{-1}$-multiplier}.
In the following lines we give some other sufficient  conditions
on a function $e$ to be an {\bf $H^{-1}$-multiplier}.

\begin{lemma} \label{LMultipl}
Let $e\, : \, \R \to \R$.  If  $e\in W^{1,\infty}$ 
(for instance  if $e \in W^{2,1}$), 
 then $e$ is a $H^{-1}(\R)$-multiplier. 
\end{lemma}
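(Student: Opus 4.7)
The plan is to argue by duality. By definition $H^{-1}(\R)$ is the topological dual of $H^1(\R)$, so for $\varphi \in \mathcal S(\R)$,
\begin{equation*}
\Vert \varphi e \Vert_{H^{-1}} \;=\; \sup_{\Vert \psi \Vert_{H^1}\le 1} \bigl| \langle \varphi e, \psi\rangle \bigr| \;=\; \sup_{\Vert \psi \Vert_{H^1}\le 1} \bigl| \langle \varphi, e\psi\rangle \bigr|,
\end{equation*}
where the pairings make sense because $\varphi \in \mathcal S(\R)$ and $e\psi \in L^2(\R)$. The task is therefore to show that multiplication by $e$ is continuous on $H^1(\R)$; then an application of the duality bound $|\langle \varphi, e\psi\rangle| \le \Vert \varphi\Vert_{H^{-1}}\Vert e\psi\Vert_{H^1}$ yields the multiplier property and in particular identifies the multiplier norm $\shc(e)$ with the operator norm of multiplication by $e$ on $H^1$.

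The $H^1$-multiplier estimate is straightforward when $e\in W^{1,\infty}$. First, $\Vert e\psi\Vert_{L^2}\le \Vert e\Vert_{L^\infty}\Vert\psi\Vert_{L^2}$. Next, since $e$ is Lipschitz, the distributional derivative satisfies $(e\psi)' = e'\psi + e\psi'$ with $e' \in L^\infty$, so
\begin{equation*}
\Vert (e\psi)'\Vert_{L^2} \;\le\; \Vert e'\Vert_{L^\infty}\Vert \psi\Vert_{L^2} + \Vert e\Vert_{L^\infty}\Vert \psi'\Vert_{L^2}.
\end{equation*}
Combining the two, $\Vert e\psi\Vert_{H^1} \le C\,\Vert e\Vert_{W^{1,\infty}}\Vert \psi\Vert_{H^1}$. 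Inserting this into the duality bound above gives
\begin{equation*}
\Vert \varphi e\Vert_{H^{-1}} \;\le\; C\,\Vert e\Vert_{W^{1,\infty}}\,\Vert \varphi\Vert_{H^{-1}},\qquad \varphi\in \mathcal S(\R),
\end{equation*}
which is precisely the required continuity of $\varphi\mapsto \varphi e$ in the $H^{-1}$-topology.

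For the parenthetical sufficient condition $e\in W^{2,1}(\R)$, the plan is to invoke the one-dimensional Sobolev embedding $W^{1,1}(\R)\hookrightarrow L^\infty(\R)$ (which follows from the fundamental theorem of calculus applied to $\int_{-\infty}^{x}f'$). Applying it both to $e$ and to $e'$ gives $e,e'\in L^\infty$, hence $e\in W^{1,\infty}$, reducing this case to the previous one.

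No serious obstacle is anticipated; the only point of care is the use of the distributional product rule $(e\psi)'=e'\psi+e\psi'$ for $\psi\in H^1$ and Lipschitz $e$, which is standard and justified because $e$ has an almost-everywhere-defined bounded derivative.
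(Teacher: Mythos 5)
Your proof is correct and follows essentially the same route as the paper: reduce by $H^{-1}$–$H^1$ duality to showing multiplication by $e$ is bounded on $H^1$, then verify that bound via the product rule and the $L^\infty$ bounds on $e$ and $e'$. You additionally spell out the $W^{2,1}\hookrightarrow W^{1,\infty}$ embedding that the paper only mentions parenthetically; this is a welcome completion but not a different method.
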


\begin{proof}
For the convenience of the reader we provide a proof. 
We observe that it is enough to show the existence of
a constant $\shc(e)$ such that
\begin{equation}\label{Lmult1} 
\norm{eg}_{H^1} \leq \shc(e)\norm{g}_{H^1},\; \forall\; g \in \Sscr (\R).
\end{equation}
In fact, if \eqref{Lmult1} holds, for every $f\in \shs(\R)$ we have
\begin{align*}
\norm{ef}_{H^{-1}} &= \sup_{\stackrel{g\in\Sscr(\R)}{\norm{g}_{H^1} \leq 1}}  
\int (efg)(x) \d x
\leq \norm{f}_{H^{-1}} \sup_{\stackrel{g\in\Sscr(\R)}{\norm{g}_{H^1} \leq 1}} \norm{eg}_{H^1}\\
&\leq \norm{f}_{H^{-1}} \shc(e),
\end{align*}
which implies that $e$ is a $H^{-1}(\R)$-multiplier.
We verify now \eqref{Lmult1}.\\
For $g \in \Sscr(\R)$  we have
\begin{align*}
\norm{e g}^2_{H^1} &= \int (eg)^2 (x) \d x + \int (eg)^{'2} (x) \d x\\
&\leq \norm{e}_\infty^2 \norm{g}_{L^2}^2 + 2 
\int (e'g)^2 (x) \d x + 2 \int (eg')^2 (x) \d x\\
&\leq \left(\norm{e}_\infty^2 + 2 
\norm{e'}_\infty^2\right) \norm{g}_{L^2}^2 + 2\norm{e}_\infty^2 \norm{g'}_{L^2}^2\\
&\leq \shc(e) \norm{g}^2_{H^1},
\end{align*}
where $\shc(e)=\sqrt 2 \left(\norm{e}^2_\infty + \norm{e'}^2_\infty\right)^{\frac12}$.
\end{proof}

As mentioned in the Introduction, we will consider a fixed filtered
 probability space 
$(\Omega, \shf, P, (\shf_t)_ {t \in [0,T]})$,
where the $ (\shf_t)_ {t \in [0,T]}$ is the canonical filtration
of a standard Brownian motion  $(W^1, \ldots, W^N)$ 
enlarged with the $\sigma$-field generated by $x_0$.  We suppose 
$\shf = \shf_T$. 

Let $(\Omega_1, \shh)$  be a measurable space.
In the sequel, we will also consider 
 another filtered probability space
$(\Omega_0, \shg, Q, (\shg_t)_{t \in [0,T]})$,
where $\Omega_0 = \Omega_1 \times \Omega $, $\shg = \shh \otimes \shf.$

Clearly any random element $Z$ on  $(\Omega, \shf)$
will be implicitly extended to $(\Omega_0, \shg)$
setting $Z(\omega^1,\omega) = Z(\omega)$. It will be for instance 
the case for the above mentioned processes $W^i, i = 1 \ldots N$.

Here we fix some conventions concerning measurability.
Any  topological space $E$ is naturally equipped with its
Borel $\sigma$-algebra $\shb(E)$. 
For instance
$\shb(\R)$ (resp. $\shb([0,T]$) denotes 
the Borel $\sigma$-algebra
of $\R$ (resp. $[0,T]$).

Given any probability space $(\Omega, \shf, P)$,
the $\sigma$-field $\shf$ will always be omitted.
When we will say that a map $T: \Omega \times E \rightarrow \R$
is measurable, we will implicitly suppose that 
the corresponding $\sigma$-algebras are $\shf \otimes \shb(E)$ and $\shb(\R)$.

All the processes on any generic measurable space $(\Omega_2, \shf_2)$
will be considered to be measurable with respect to both
 variables $(t,\omega)$.
In particular any processes on  $\Omega^1 \times \Omega$
is supposed to be measurable with respect to 
$([0,T] \times \Omega^1 \times \Omega, \shb([0,T]) \otimes \shh \otimes \shf)$.

A function $(A, \omega) \mapsto Q(A, \omega)$ 
from $\shh \times \Omega \rightarrow \R_+$ 
is called {\bf random  kernel} (resp.  {\bf random probability kernel})
if for each $\omega \in \Omega$, $Q(\cdot, \omega)$
is a finite  positive (resp. probability) measure and for each $A \in \shh$,
$\omega \mapsto Q(A, \omega)$ is $\shf$-measurable.
The finite measure $Q(\cdot, \omega)$ will also be denoted
by  $Q^\omega$. 
To that random  kernel 
we can associate a specific  finite measure (resp. probability)
denoted by $Q$ on $(\Omega_0, \shg)$
setting $Q(A \times F) = \int_F Q(A, \omega) P(\d \omega) = \int_F Q^\omega(A) P(\d \omega) $,
for $A \in \shh, F\in \shf$.
The probability $Q$ from above 
will be supposed here and below to be associated with a random probability  kernel.


\begin{defi}\label{DSEPS}
If there is a measurable space $(\Omega_1, \shh)$
and a random kernel $Q$ as before, then the probability space
$(\Omega_0, \shg, Q)$ will be called {\bf suitable enlarged probability
space} (of $(\Omega,\shf,P)$).
\end{defi}
As said above, any random variable on $\Omega, \shf)$ will be considered as
a random variable on $\Omega_0 = \Omega_1 \times \Omega$. Then, obviously, $W^1, \ldots, W^N$ 
are independent Brownian motions also 
$(\Omega_0, \shg, Q)$.

Given a local martingale $M$ on any filtered probability space,
 the process $Z:=\she(M)$
denotes its Dol\'eans exponential, which is a local martingale.
 In particular it is the unique
 solution of $\ dZ_t = Z_{t-} dM_t, \quad Z_0 = 1$. 
When $M$ is continuous we have 
 $Z_t = e^{M_t- \halb \langle M \rangle_t}$.

We go on discussing some basic probabilistic tools.
We come back to the notations presented at the beginning of the Introduction,
in particular concerning the random field $\mu$.

Let  $ Z = (Z(s,\xi), s \in [0,T], \xi \in \R)$ be a random field
on $(\Omega, \shf, (\shf_t), P) $
such that $\int_0^T \left (\int_{\R}  \vert Z(s,\xi) \vert \d \xi \right)^2 
\d s < \infty$
a.s. and it is an $L^1(\R)$-valued $(\shf_s)$-progressively measurable process.
 Then the stochastic integral
\begin{equation} \label{DSI}
\int_{[0,t]\times \R} Z(s, \xi)  \mu(\d s, \xi) d \xi := \sum_{i=0}^N \int_0^t 
 \left(\int_\R Z(s,\xi) e^i(\xi) \d \xi\right) \d W^i_s,
 \end{equation}
is well-defined. The consistency of \eqref{DSI} and \eqref{1.2bis} can be seen as follows.\\
Let for a moment $\langle \cdot, \cdot \rangle$ denote the dualization between measures and functions on $\R$, i.e.
$$ \langle \nu, f \rangle := \int_\R f \d \nu, $$
whenever the right-hand side makes sense. Then, for $t \in [0,T]$
$$ \int_{[0,t] \times \R} Z(s, \xi) \mu(\d s, \xi) \d \xi = \sum_{i=0}^N \int_0^t \langle Z(s,\xi) \d \xi, e^i \rangle \d W^i_s $$ 
and
$$ \int \mu(\d s, Y_s(\cdot,\omega)) = \int_0^N \int_0^t \langle 
\delta_{Y_s(\cdot, \omega)},  e^i \rangle 
\d W^i_s,$$
where $\delta_x$ means Dirac measure with mass in $x \in \R$.

We discuss now in which sense the SPDE \eqref{PME}
has to be understood.

 \begin{defi} \label{DSPDE}
 A  random field $X = (X(t, \xi, \omega), 
t \in [0,T]), \xi 
 \in \R, \omega \in \Omega) $ is said to be a solution to \eqref{PME} if
 $P$ a.s. 
we have the following.
\begin{itemize}
\item $X \in C([0,T]; \shs'(\R)) \cap  L^2([0,T]; L^1_{\rm loc} (\R))$.
\item $X$ is an $\shs'(\R))$
-valued $(\shf_t)$-progressively measurable process.
\item 
for any test function $\varphi \in \shs(\R)$ with compact support,
 $ t \in ]0,T]$ 
 we have 
\begin{eqnarray} \label{EDist}
\int_\R X(t,\xi) \varphi(\xi) \d\xi &=& \int_\R x_0(\d\xi) \varphi(\xi)  +
\halb \int_0^t \d s \int_\R 
\eta(s,\xi, \cdot) \varphi''(\xi) \d\xi
 \nonumber\\
& & \\
&+& \int_{[0,t] \times \R} X(s,\xi) \varphi(\xi) \mu(\d s,\xi) \d\xi, 
\nonumber
\end{eqnarray}
where $\eta$ is an $L^1_{\rm loc} (\R) \cap   \shs'(\R)$-valued $(\shf_t)$-progressuvely measurable process
such that for any $\varphi \in \shs(\R)$, we have
$$ \int_0^T \d s \int_\R \vert \eta(s,\xi, \omega) \varphi(\xi) \vert
 \d \xi < \infty,$$
and 
$\eta(s,\xi, \omega) \in \psi(X(s,\xi,\omega)), \quad \d s \d \xi \d P$-a.e.
$(s,\xi, \omega) \in [0,T] \times \R \times \Omega$.
\end{itemize}
\end{defi}
\begin{rem} \label{RDSPDE}
Clearly, if $\psi$ is continuous then $\eta(s,\xi, \cdot) = 
\psi(X(s,\xi,\cdot))$.
\end{rem}

\begin{defi}\label{mylau}
Let  $Y: \Omega_1 \times \Omega \times [0,T] \rightarrow \R$ 
be a measurable process, progressively measurable on $(\Omega_0, \shg, Q,
 (\shg_t)),$ where $(\shg_t)$ is some filtration on $(\Omega_0,\shg,Q)$
such that $W^1, \ldots, W^N$ are $(\shg_t)$-Brownian motions on 
$(\Omega_0,\shg,Q)$.
As we shall see below in Proposition \ref{P28},
 for every $t\in [0,T]$
\begin{equation} \label{e2.1}
E^Q \left( \mathcal{E}_t \left(\int^\cdot_0 \mu (\d s, Y_s) \right) \right) < \infty.
\end{equation}
To $Y$, we will associate its {\bf family of $\mu$-marginal laws}, i.e.
 the family of random kernels ($t \in [0,T]$)
\begin{equation*}
\Gamma_t = \left(\Gamma_t^Y (A,\omega), \; A\in \shb(\R), \; 
\omega \in \Omega\right)
\end{equation*}
defined by
\begin{equation}\tag{E2.2}
\varphi   \mapsto E^{Q^\omega} \left( \varphi (Y_t (\cdot,\omega))
 \she_t \left( \int^\cdot_0 \mu (\d s, Y_s (\cdot , \omega))\right) \right) 
= \int_\R \varphi(r) \Gamma_t^Y(\d r, \omega),
\end{equation}
where $\varphi$ is a generic bounded real Borel function. We will also say that for fixed $t \in [0,T], \; \Gamma_t$ is {\bf the $\mu$- marginal law} of $Y_t$.
\end{defi}
We observe that, taking into account L\'evy's characterization theorem,
the assumption on $W^1,\ldots, W^N$ to be $(\shg_t)$-Brownian motions
can be replaced with  $(\shg_t)$-local martingales.
\begin{rem} \label{R27}
\begin{enumerate}[i)]
\item If $\O$ is a singleton $\{\o_0\}, \; e^i=0,\; 1\leq i\leq N$, the
 $\mu$-marginal laws coincide with the weighted laws
\begin{equation*}
\vp \mapsto E^Q \( \vp (Y_t) \exp \(\int^t_0 e^0 (Y_s) \d s \) \),
\end{equation*}
with $Q=Q^{\o_0}$. In particular if $\mu \equiv0$ then the $\mu$-marginal laws are the classical laws.
\item By \eqref{e2.1}, for any $t\in [0,T]$ , for $P$ almost all
 $\o \in \Omega$,
\begin{equation} \label{ER27}
E^{Q^\o} \( \she_t \(\int^\cdot_0 \mu (\d s, Y_s (\cdot \; , \o))\)\)< \infty.
\end{equation}
\item The function $(t,\o)\mapsto \Gamma_t(A,\o)$ is measurable, for any $A \in
 \shb(\R)$, because $Y$ is a measurable process.
\end{enumerate}
\end{rem}
\begin{prop} \label{P28} 
Consider the situation of Definition \ref{mylau}.
Then we have the following.
\begin{enumerate}[i)]
\item 
The process
 $M_t:=\she_t \( \sum_{i=1}^N \int^\cdot_0 e^i (Y_s)\d W_s^i \)$ is a 
 martingale.
\item The quantity \eqref{e2.1} is bounded by
$ \exp \(T\norm{e^0}_\9 \).$\\
\item $E^Q(M_t^2) \le \exp(3T \sum_{i=1}^N \Vert e^i \Vert_\infty^2),
t \in [0,T]$. Consequently $M$ is a uniformly integrable martingale.  
\item For  $P$-a.e. $\omega \in \Omega$,
\begin{equation*}
\sup_{0 \le t \le T} \norm{\Gamma_t (\cdot,\o)}_{\var} <\9.
\end{equation*}
\end{enumerate}
\end{prop}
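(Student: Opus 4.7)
The plan is to exploit the global boundedness of $e^0, \ldots, e^N$ (which follows from $e^i \in C^2_b(\R)$) to make Novikov's criterion trivial, carry out all exponential estimates at the level of the full probability $Q$, and only at the end pass to $P$-a.s.\ statements about $Q^\omega$ via the disintegration $Q = \int Q^\omega \, \d P$. For (i), set $N_t := \sum_{i=1}^N \int_0^t e^i(Y_s) \d W^i_s$. Under $Q$ the $W^i$ are independent $(\shg_t)$-Brownian motions and $Y$ is $(\shg_t)$-progressive, so $N$ is a continuous $(\shg_t)$-local martingale with quadratic variation $\<N\>_t = \sum_i \int_0^t (e^i(Y_s))^2 \d s \le T \sum_i \norm{e^i}_\9^2$; this uniform bound makes Novikov trivial, hence $M = \she(N)$ is a genuine $(\shg_t)$-martingale on $[0,T]$. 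For (ii), separate the finite-variation drift $\int_0^\cdot e^0(Y_s)\,\d s$ (which contributes to the exponent but carries no quadratic variation) from the martingale part to obtain
\begin{equation*}
\she_t\Big(\int_0^\cdot \mu(\d s, Y_s)\Big) \;=\; M_t \exp\Big(\int_0^t e^0(Y_s)\, \d s\Big);
\end{equation*}
taking $E^Q$ and combining $E^Q[M_t] = 1$ with the pathwise bound $|\int_0^t e^0(Y_s)\,\d s| \le T \norm{e^0}_\9$ yields (ii).

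For (iii), write $M_t^2 = \she_t(2N)\exp(\<N\>_t)$: since $\<2N\> = 4\<N\>$ is still uniformly bounded, $\she(2N)$ is again a true $Q$-martingale, so $E^Q[\she_t(2N)] = 1$, and the deterministic bound on $\<N\>_t$ gives $E^Q[M_t^2] \le \exp(T\sum_i \norm{e^i}_\9^2)$, well within the stated constant; $L^2$-boundedness forces $M$ to be uniformly integrable. For (iv), Doob's maximal inequality applied to $M$ under $Q$ gives $E^Q[\sup_{t \in [0,T]} M_t^2] \le 4 E^Q[M_T^2] < \9$, so $\sup_t M_t \in L^2(Q) \subset L^1(Q)$. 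Since $\Gamma_t(\cdot, \omega)$ is a non-negative kernel, its total variation equals its total mass, and using the pathwise factorisation from (ii) we get
\begin{equation*}
\norm{\Gamma_t(\cdot,\omega)}_{\var} = \Gamma_t(\R,\omega) \;\le\; e^{T\norm{e^0}_\9}\, E^{Q^\omega}\Big[\sup_{s\in[0,T]} M_s\Big],
\end{equation*}
and Fubini applied to $Q = \int Q^\omega \, \d P$ shows that the right-hand side is $P$-a.s.\ finite, uniformly in $t$.

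The only mildly delicate point worth flagging is the doubly-random structure: $M$ is a martingale with respect to $(\shg_t)$ under $Q$, not under the individual kernel measures $Q^\omega$, so each integrability estimate must be carried out at the $Q$-level first and then disintegrated; beyond that, the argument reduces to classical stochastic-calculus bookkeeping with bounded coefficients.
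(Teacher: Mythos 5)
Your argument is correct and follows the paper's overall structure: Novikov for (i), $E^Q[M_t]=1$ plus the pathwise bound on $\int_0^t e^0(Y_s)\,\d s$ for (ii), a factorisation of $M_t^2$ into a Dol\'eans exponential times a bounded exponential for (iii), and a maximal estimate on $M$ together with disintegration for (iv). You are also right to flag that everything must be done at the $Q$-level with respect to $(\shg_t)$ before passing to $Q^\omega$ via Fubini.

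The one place you genuinely depart from the paper is item (iv). The paper estimates $E^Q\bigl[\sup_{t\le T} M_t\bigr]$ by the Burkholder--Davis--Gundy inequality followed by Jensen and then (iii); you instead apply Doob's $L^2$ maximal inequality directly, $E^Q\bigl[\sup_t M_t^2\bigr]\le 4E^Q[M_T^2]<\infty$, which is shorter and yields the same conclusion without needing the quadratic-variation estimate. Both are valid; yours is the cleaner route given that (iii) already supplies an $L^2$-bound. Incidentally, your bound in (iii), $\exp\bigl(T\sum_i\norm{e^i}_\9^2\bigr)$, is sharper than the paper's stated $\exp\bigl(3T\sum_i\norm{e^i}_\9^2\bigr)$ and follows correctly from $M_t^2=\she_t(2N)\exp\bigl(\bracket{N}_t\bigr)$; the paper's factor $3$ is simply a looser upper bound.
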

\begin{rem} \label{R29}
Proposition \ref{P28} ii) yields in particular that
 $Y$ always admits 
$\mu$-marginal laws.
\end{rem}
\begin{proof}
\begin{enumerate}[i)]
\item The result follows  since the  Novikov condition
\begin{equation*}
E \(\exp\(\frac12 \sum_{i=1}^N \int^t_s e^i (Y_s)^2 \d s\)\)<\9
\end{equation*}
is verified,   because the functions $e^i, \; i=1\dots N$, are bounded.
\item This follows because $E^Q(M_t)=1 \; \forall t\in [0,T]$.
\item $M^2_t$ is equal to
  $N_t \exp\left(3\sum_{i=1}^N \int_0^t (e^i)^2(Y_s) \d s\right), $
where $N$ is a positive martingale with  $N_0 = 1$.
\item For $t\in [0,T]$,
\begin{align*}
\sup_{t\leq T} \norm{\Gamma_t(\cdot\;,\o)}_{\var} &= \sup_{t\leq T} E^{Q^\o} \(M_t \exp \(\int^t_0 e^0 (Y_s) \d s\)\)\\
&\leq \exp \(T\norm{e^0}_\9 \) \sup_{t \le T} E^{Q^\o} \(M_t\).
\end{align*}
Taking the expectation with respect to $P$ it implies
\begin{eqnarray*}
E^P\(\sup_{t\leq T} \norm{\Gamma^Y_t (\cdot\;,\o)}_{\var} \)
&\le& \exp \( T\norm{e^0}_\9 \) E^P\( \sup_{t\leq T} E^{Q^\o} 
\(M_t\)\)\\
&\le &  \exp \(T\norm{e^0}_\9 \) E^P \(E^{Q^\o}\(\sup_{t\leq T} M_t\)\).
\end{eqnarray*}
By the Burkholder-Davis-Gundy (BDG) inequality this is bounded by
\begin{eqnarray*}
3 \exp \(T\norm{e^0}_\9\) E^Q\( \left\langle M \right\rangle^{\frac12}_T\) & \le &
 3 \exp \(T\norm{e^0}_\9\)
 E^Q\(  \left[ \int^T_0 \d s  \sum_{i=1}^N M^2_s
 e^i (Y_s)^2\right]^\halb \) \\
& \le& {\mathrm C}(e,N,T) E^Q \left( \int^T_0 \d s M_s^2 
 \right), 
\end{eqnarray*}
by Jensen's inequality;
 ${\mathrm C}(e,N,T)$ is a constant
 depending on $N,T$ and $e_i,\; i=0\dots N,$.
By Fubini's Theorem and item iii),
 we have
\begin{equation*}
 E^Q \left( \int^T_0 \d s  M_s^2 \right)
\le T\exp(3T \sum^N_{i=1}\Vert e^i \Vert_\infty). 
\end{equation*}
\end{enumerate}
\end{proof}
We go on introducing the concept of weak-strong existence and uniqueness of a stochastic differential equation.
Let $\gamma:[0,T]\times \R \times \O \to \R$ be an $(\shf_t)$-progressively measurable random fields and
 $x_0$ be a  probability on $\shb(\R).$
\begin{defi}\label{DWeakStrong}
\begin{enumerate}
\item[a)] We say that (DSDE)$(\gamma, x_0)$ admits {\bf weak-strong existence}
 if there is a suitable
 extended probability space $(\Omega_0,\shg, Q)$, i.e.
a  measurable space $(\Omega_1,\shh)$, a
 probability kernel $\(Q(\cdot\;,\o), \o\in\O \)$ on $\shh \times \O$, 
two $Q$-a.s. continuous processes $Y,B$ on $(\O_0,\shg)$ 
where $\O_0=\O_1\times \O,\; \shg = 
\shh \otimes \shf$ such that the following holds.
\begin{description}
\item{1)} For almost all $\o$, $Y(\cdot,\o)$ is a (weak) solution to
\begin{equation} \label{DWS}
\begin{cases}
Y_t (\cdot\;,\o)=Y_0 + \int_0^t \gamma (s,Y_s (\cdot\;,\o),\o) \d B_s 
(\cdot\;,\o), \\
\text{Law} (Y_0)=x_0,
\end{cases} 
\end{equation}
with respect to $Q^{\o}$, where $B(\cdot\;,\o)$ is a $Q^\o$-Brownian motion
for almost all $\o$.

\item{2)} We denote $(\shy_t)$ the canonical  filtration
associated with $(Y_s, 0 \le s \le t)$ and
$\shg_t = \shy_t \vee (\{\emptyset, \Omega_1\} \otimes \shf_t)$.
We suppose that $W^1, \ldots, W^N$ is a $(\shg_t)$-martingale 
under $Q$.

\item{3)}
For every $0 \le s \le T$, for every bounded continuous 
$F: C([0,s]) \rightarrow \R$,
the r.v. $\o \mapsto E^{Q^\o}(F(Y_r(\cdot,\omega), r \in [0,s])) $
is $\shf_s$-measurable.

\end{description}
\item[b)] We say that (DSDE)$(\gamma,x_0)$ admits 
{\bf weak-strong uniqueness} if the following holds. 
Consider a measurable space $(\O_1,\shh)$ (resp. $(\wt \O_1, \wt \shh)$), 
a probability kernel $(Q(\cdot\;,\o),\o\in\O)$ (resp.  $(\wt 
Q(\cdot\;,\o),\o\in\O)$), with processes $(Y,B)$ (resp. $(\wt Y, \wt B)$) 
such that 
(\ref{DWS}) 
holds (resp.
(\ref{DWS}) 
  holds with $(\Omega_0,\shg,Q)$ replaced with $(\wt \Omega_0,
\wt \shg_0, \wt Q),$ $\wt Q$ being 
associated with $(\wt Q(\cdot\;, \o)$)).
Moreover we  suppose that item 2. is verified for $Y$ and $\tilde Y$.
\\
Then $(Y,W^1, \ldots, W^N)$ and $(\wt Y, W^1, \ldots, W^N)$ have the same law.
\item[c)] A process $Y$ fulfilling 
items 1) and 2) under (a)
will be called 
{\bf weak-strong solution of} (DSDE)$(\gamma,x_0)$.
\end{enumerate}
\end{defi}
\begin{rem} \label{R2.10}
\begin{description}
\item{a)} Since for almost all $\omega \in \Omega$, 
$B(\cdot,\omega)$ is a Brownian motion under $Q^\omega$,
it is clear that $B$ is a Brownian motion under $Q$,  
which is independent of $\shf_T$, i.e. independent
of $W^1, \ldots, W^N$. \\
Indeed let $G: C([0,T]) \rightarrow \R$ be a continuous
bounded functional, and denote by $\shw$ the Wiener measure.
Let $F$ be a bounded  $\shf_T$-measurable r.v.
 Since for each $\omega$, $B(\cdot,\omega)$ is
a Wiener process with respect to $Q^\omega$,
we get 
\begin{eqnarray*}
 E^Q(F G(B)) &=& \int_\Omega F E^{Q^\omega}(G(B(\cdot,\omega))) \d P(\omega)
= \int_\Omega F(\omega) \d P(\omega) \int_{\Omega_1} G(\omega_1)
 d\shw(\omega_1) \\
 &=& \int_{\Omega_0} F(\omega) \d Q(\omega_0)
 \int_{\Omega_0} G(\omega_1) \d Q(\omega_0).
\end{eqnarray*}
This shows that $(W^1,\ldots,W^N)$ and $B$ are independent.
Taking $F = 1_\Omega$ in previous expression, the equality between the
left-hand side and the third term, shows that 
$B$ is a Brownian motion under $Q$.
\item{b)} Since for any $ 1\le i,j \le N$,
\begin{equation}\label{EIJ}
 [W^i,W^j]_t = \delta_{ij} t,  \ [W^i, B]= 0, \ [B,B]_t = t,
\end{equation}
 L\'evy's characterization theorem, implies that   
$(W^1,\dots,W^N, B)$ is a $Q$-Brownian motion.
\item{c)} By item a) 2) of Definition \ref{DWeakStrong},  
by L\'evy's characterization theorem and 
again by \eqref{EIJ}, 
 it follows that
$W^1, \ldots, W^N$ are $(\shg_t)$-Brownian motions with respect to $Q$.  
\item{d)} An equivalent formulation to 1) 
in item a) of Definition \ref{DWeakStrong} is the following. 
For $P$ a.e.,  $\o \in  \Omega$, $Y(\cdot\;,\o)$ solves the $Q^\o$-martingale problem with respect to the (random) PDE operator
\begin{equation*}
L^\o_t f(\xi) = \frac{1}{2} \gamma^2 (t,\xi,\o) f''(\xi),
\end{equation*}
and initial distribution $x_0$.
\end{description}
\end{rem}
The lemma below shows that, whenever weak-strong uniqueness
holds, then the marginal laws of any weak solution $Y$
are uniquely determined.
\begin{lemma} \label{LMargU}
Let $Y$ (resp. $\tilde Y$) be a process on a suitable 
enlarged probability space $(\Omega_0,\shg, Q)$
(resp.  $(\tilde \Omega_0, \tilde \shg, \tilde Q)$).
Set $W = (W^1, \ldots, W^N)$.
Suppose that the law $(Y,W)$ under $Q$ and
the law of $(\tilde Y, W)$ under $\tilde Q$ are the same.
Then, the $\mu$-marginal laws of $Y$ under $Q$
coincide a.s. with the   $\mu$-marginal laws of $\tilde Y$ under $\tilde Q$. 
\end{lemma}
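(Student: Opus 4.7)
The plan is to exploit that, for fixed $\omega$, the $\mu$-marginal law $\Gamma_t^Y(\varphi,\omega)$ is essentially a conditional expectation of the form $E^Q(\varphi(Y_t)\,\she_t(M)\mid\{\emptyset,\Omega_1\}\otimes\shf)(\omega)$, where
\begin{equation*}
M_t:=\sum_{i=1}^N\int_0^t e^i(Y_s)\,\d W_s^i+\int_0^t e^0(Y_s)\,\d s,
\end{equation*}
and to show that this conditional expectation is determined, as a $P$-a.s.\ functional of $W$, by the joint law of $(Y,W)$.

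\textbf{Step 1 (distributional identification).} I first show that the law of $(Y,W,\she_\cdot(M))$ under $Q$ equals the law of $(\tilde Y,W,\she_\cdot(\tilde M))$ under $\tilde Q$, where $\tilde M$ is defined from $(\tilde Y,W)$ analogously. For each $1\le i\le N$, the It\^o integral $\int_0^t e^i(Y_s)\,\d W_s^i$ can be realised as the limit in $Q$-probability of Riemann sums $S_n^i=\sum_k e^i(Y_{t_k^n})\bigl(W^i_{t_{k+1}^n\wedge t}-W^i_{t_k^n\wedge t}\bigr)$, which are pathwise measurable functionals $F_n^i(Y,W)$. Setting $\tilde S_n^i:=F_n^i(\tilde Y,W)$, the same approximation converges in $\tilde Q$-probability to $\int_0^t e^i(\tilde Y_s)\,\d W_s^i$. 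Since $(Y,W)$ under $Q$ and $(\tilde Y,W)$ under $\tilde Q$ have the same law, so do $(Y,W,S_n^1,\dots,S_n^N)$ and $(\tilde Y,W,\tilde S_n^1,\dots,\tilde S_n^N)$, and hence so do their limits in probability (tested against bounded continuous functions). The Lebesgue integral $\int_0^t e^0(Y_s)\,\d s$ is a pathwise functional of $Y$, and the Dol\'eans exponential is continuous in its input, so the full claim follows.

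\textbf{Step 2 (conclusion).} Since $\shf$ is generated by $W=(W^1,\dots,W^N)$, any bounded $\shf$-measurable $H$ is of the form $H=h(W)$ for a bounded Borel functional $h$. Let $\varphi$ be bounded Borel on $\R$; by Step 1,
\begin{equation*}
E^Q\bigl(h(W)\,\varphi(Y_t)\,\she_t(M)\bigr)=E^{\tilde Q}\bigl(h(W)\,\varphi(\tilde Y_t)\,\she_t(\tilde M)\bigr).
\end{equation*}
Disintegrating $Q$ via $Q(\d\omega_1,\d\omega)=Q^\omega(\d\omega_1)P(\d\omega)$ (and analogously $\tilde Q$), together with Definition \ref{mylau}, rewrites the two sides as $E^P(h(W)\,\Gamma_t^Y(\varphi,\cdot))$ and $E^P(h(W)\,\Gamma_t^{\tilde Y}(\varphi,\cdot))$. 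Both $\Gamma_t^Y(\varphi,\cdot)$ and $\Gamma_t^{\tilde Y}(\varphi,\cdot)$ being $\shf$-measurable, this identity for arbitrary bounded Borel $h$ forces $\Gamma_t^Y(\varphi,\cdot)=\Gamma_t^{\tilde Y}(\varphi,\cdot)$ $P$-a.s. Choosing $\varphi$ in a countable measure-determining subset of $C_b(\R)$ and $t$ in a countable dense subset of $[0,T]$, then extending to every $t\in[0,T]$ by $Q^\omega$-a.s.\ path continuity of $Y$ combined with dominated convergence (majorant $\|\varphi\|_\infty\sup_{s\le T}\she_s(M)$, which is $Q^\omega$-integrable for $P$-a.e.\ $\omega$ by Proposition \ref{P28}), yields simultaneous equality of the two random kernels outside a single $P$-null set.

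\textbf{Main obstacle.} The only delicate point is Step 1: the stochastic integrals entering $\she_t(M)$ are a priori defined in terms of the underlying measure, so one cannot naively assert that $\she_t(M)$ is a pathwise functional of $(Y,W)$. This is resolved by the Riemann-sum approximation, which is robust because each $e^i$ is bounded; the passage from convergence in probability under two different measures to equality of the laws of the limits uses only that the approximating sequences are identical measurable functionals of the underlying processes on both sides.
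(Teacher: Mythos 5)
Your proof is correct and takes essentially the same route as the paper: both approximate the stochastic integrals entering $\she_t(M)$ by pathwise-measurable functionals of $(Y,W)$ (you via Riemann sums, the paper via the forward time-regularization of the It\^o integral from \cite{russoSem}), deduce equality of the relevant mixed expectations, and then disintegrate with respect to $P$ to obtain $P$-a.s.\ equality of the random kernels. The remaining differences --- testing against general bounded Borel functions of $W$ rather than $1_F$ with $F\in\shf_t$, and extending over $t$ via a countable dense set plus path continuity rather than fixing $t$ --- are cosmetic.
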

\begin{proof}  \
Let $0 \le t \le T$.
Using the assumption, we deduce    that
for any  bounded continuous function $f: \R \rightarrow \R$,
and every $F \in \shf_t$, we have
\begin{equation} \label{EMargU}
E^{Q} \left (1_F f(Y_t) \she_t\left(\sum_{i=0}^N 
\int_0^\cdot  e^i(Y_s) dW^i_s \right)\right) = 
E^{\tilde Q} \left (1_F f(\tilde Y_t) \she_t\left(\sum_{i=0}^N 
\int_0^\cdot  e^i(\tilde Y_s) dW^i_s\right)\right). 
\end{equation}
To show this, using
classical regularization properties of It\^o integral,
see e.g. Theorem 2 in \cite{russoSem},
and  uniform integrability arguments, 
 we first  observe that 
$$ \she_t\left(\sum_{i=0}^N 
\int_0^\cdot  e^i(Y_s) dW^i_s\right)$$
is the limit in $L^2(\Omega_0,Q)$ of 
$$ \she_t \left(\sum_{i=0}^N 
\int_0^\cdot  e^i(Y_s) \frac{W^i_{s+\varepsilon} - W^i_s}{\varepsilon} \d s
\right).$$
A similar approximation property arises replacing
$Y$ with $\tilde Y$ and $Q$ with $\tilde Q$.
Then \eqref{EMargU} easily follows.\\ 
To conclude, it will be enough to show the existence
of a countable well-chosen 
 family $(f_j)_{j \in \N}$ of bounded continuous real functions
for which, for $P$ almost all $\omega \in \Omega$, 
for any $j \in \N$, we have $R_j = \tilde R_j$
where 
\begin{eqnarray}\label{ED1}
R_j(\omega) &=& E^{Q^\omega} \left(f_j(Y_t(\cdot,\omega)) 
 \she_t\left(\sum_{i=0}^N 
\int_0^\cdot  e^i(Y_s(\cdot,\omega)) dW^i_s\right)\right) \nonumber\\
&& \\
R_j(\omega) &=& E^{\tilde Q^\omega} \left(f_j(\tilde Y_t(\cdot,\omega)) 
 \she_t\left(\sum_{i=0}^N 
\int_0^\cdot  e^i(\tilde Y_s(\cdot,\omega)) dW^i_s\right)\right) \nonumber.
\end{eqnarray}
This will follow, since applying \eqref{EMargU}, for any
$F \in \shf_t$, we have
$ E^P(1_F R_j) = E^P(1_F \tilde R_j)$.
\end{proof}

\begin{prop} \label{PB1}
Let  $Y$ be a process  as in Definition \ref{DWeakStrong} a). 
We have the following.
\begin{enumerate}
\item $Y$ is a $(\shg_t)$-martingale
on the product space $(\Omega_0,\shg, Q)$.
\item $[Y, W^i] = 0, \ \forall 1 \le i \le N$. 
\end{enumerate}
\end{prop}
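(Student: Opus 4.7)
The two items both reduce to the fibrewise martingale property of $Y$ under the $Q^{\omega}$'s, combined with the disintegration $Q(d\omega_1, d\omega) = Q^{\omega}(d\omega_1)P(d\omega)$ and the independence of $B$ from $(W^1, \ldots, W^N)$ under $Q$ recorded in Remark~\ref{R2.10}.

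For item 1, the plan is to verify $E^Q(Y_t \mathbf{1}_G) = E^Q(Y_s \mathbf{1}_G)$ for every $G \in \shg_s$ and $s \le t$. By a monotone class argument it suffices to treat the $\pi$-system $\{A \cap (\Omega_1 \times F) : A \in \shy_s,\; F \in \shf_s\}$, which generates $\shg_s$. For $P$-almost every $\omega$, $Y(\cdot, \omega)$ is a continuous $Q^{\omega}$-local martingale with respect to the filtration under which $B(\cdot,\omega)$ is a Brownian motion (cf. Definition~\ref{DWeakStrong} a) 1)), hence also with respect to its own natural filtration on $\Omega_1$ by the tower property. Since the $\omega$-section $A^{\omega}$ is measurable with respect to $\sigma(Y_r(\cdot,\omega),\, 0 \le r \le s)$, on each fiber
\[
E^{Q^{\omega}}\bigl(\mathbf{1}_{A^{\omega}}(Y_t(\cdot,\omega) - Y_s(\cdot,\omega))\bigr) = 0.
\]
Integrating against $\mathbf{1}_F\,dP$ and using the measurability supplied by item 3) of Definition~\ref{DWeakStrong} (first for bounded continuous functionals on path space, then extended to bounded Borel functionals by monotone class) yields $E^Q(\mathbf{1}_G(Y_t - Y_s)) = 0$. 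The fibrewise equality requires $Y(\cdot,\omega)$ to be a true martingale rather than only a local one; this is automatic in the applications of this paper, where $\gamma = \Phi \circ X$ with $\Phi$ bounded, and otherwise is obtained by a routine localization via $\tau_n = \inf\{t : |Y_t| > n\}$.

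For item 2, I would start from the $Q$-a.s. representation $Y_t = Y_0 + \int_0^t \gamma(s, Y_s, \omega)\,dB_s$, inherited from the fibrewise identity. By Remark~\ref{R2.10} b), $(W^1, \ldots, W^N, B)$ is a standard $Q$-Brownian motion, so $[B, W^i] \equiv 0$ for every $i$. The bilinearity of quadratic covariation for continuous semimartingales then gives
\[
[Y, W^i]_t = \int_0^t \gamma(s, Y_s, \omega)\,d[B, W^i]_s = 0.
\]
The main obstacle is a careful bookkeeping of filtrations: one must check that the stochastic integral defined fibrewise under the $Q^{\omega}$'s coincides $Q$-a.s. with a global stochastic integral under a joint filtration extending $\shg_t$ by the past of $B$, in which $B$ remains a Brownian motion by independence and $\gamma(\cdot, Y_\cdot, \omega)$ is predictable. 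This is essentially routine once item 1 is established, but it must be made explicit to legitimize the covariation computation.
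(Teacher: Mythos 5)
Your treatment of item 1 coincides, in structure and substance, with the paper's: disintegration $E^Q = \int_\Omega E^{Q^\omega}\,\d P$, the fibrewise $Q^\omega$-martingale property of $Y(\cdot,\omega)$ with respect to its own natural filtration on $\Omega_1$, and the $\shf_s$-measurability of $\omega \mapsto E^{Q^\omega}\bigl(F(Y_r(\cdot,\omega),\, r \le s)\bigr)$ granted by item a) 3) of Definition~\ref{DWeakStrong}. The paper in fact handles both items in a single computation: it establishes
\[
E^Q \bigl(Y_t W^i_t\, G(Y_r,\, r \le s)\, 1_{F_s}\bigr) = E^Q \bigl(Y_s W^i_s\, G(Y_r,\, r \le s)\, 1_{F_s}\bigr),
\qquad 1 \le i \le N+1,\ W^{N+1}\equiv 1,
\]
via precisely the three ingredients above together with the $(\shg_t)$-martingale property of the $W^i$. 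The case $i = N+1$ gives item 1, and for $1 \le i \le N$ one reads off that $Y W^i$ is a $(\shg_t)$-martingale, hence $[Y, W^i] \equiv 0$ because $Y$ and $W^i$ are continuous $(\shg_t)$-local martingales and $[Y,W^i]$ is their unique continuous finite-variation compensator.

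Your item 2 takes a genuinely different route: you pass to a global $Q$-a.s.\ representation $Y_t = Y_0 + \int_0^t \gamma(s, Y_s, \omega)\,\d B_s$ and compute $[Y, W^i]_t = \int_0^t \gamma\,\d[B, W^i]_s = 0$ from the independence (hence orthogonality) of $B$ and the $W^i$ under $Q$. This is a natural stochastic-calculus shortcut, but it leans on exactly the obstacle you flag in the last paragraph: the fibrewise integral under $Q^\omega$ must be certified as a $Q$-stochastic integral with respect to a filtration under which both $B$ and $(W^1, \ldots, W^N)$ are Brownian motions with $[B, W^i] = 0$. This is not automatic from Definition~\ref{DWeakStrong}, because $B$ need not be $(\shg_t)$-adapted: it is in the non-degenerate construction of Proposition~\ref{P4.1}, where $B_t = \int_0^t \gamma^{-1}(s,Y_s,\omega)\,\d Y_s$, but in the degenerate case of Section~\ref{S7} the paper is forced to supplement $B$ with an auxiliary Brownian motion $\beta$ on the set $\{\gamma = 0\}$, and the resulting $B$ lives only in an enlargement of $(\shg_t)$, in which the martingale property of the $W^i$ would have to be re-verified. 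The paper's argument via the martingale property of $Y W^i$ never touches the stochastic-integral representation of $Y$ and thereby sidesteps this bookkeeping entirely; that is the tangible advantage of its unified computation. Your flagged caveat is thus not a minor formality but the substantive technical issue, and closing it would amount to redoing, in a different guise, the very estimate the paper's argument already contains.
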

\begin{proof} Let $0 \le s < t \le T$, $F_s \in \shf_s$ and
 $G: C([0,s]) \rightarrow \R$ be continuous and bounded. 
We will  prove below  that, for $1 \le i \le N+1$,
setting $W^{N+1}_t = 1$, for all $t \ge 0$,
\begin{equation} \label{EB1}
E^Q (Y_t W^i_t  G(Y_r, r \le s) 1_{F_s} ) = E^Q (Y_s W^i_s 1_{F_s} 
 G(Y_r, r \le s)).
\end{equation}
Then \eqref{EB1} with $i= N+1$ shows  item 1.
Considering \eqref{EB1} with $1 \le i \le N$,
shows that $Y W^i$ is a $(\shg_t)$-martingale, which shows
item 2.
Therefore, it remains to show \eqref{EB1}.\\
The left-hand side of that equality gives
\begin{eqnarray*}
\int_\Omega \d P(\omega) &&  W^i_t(\omega) 1_{F_s}(\omega)  E^{Q^\omega} 
\left(Y_t(\cdot,\omega)  G(Y_r(\cdot,\omega), r \le s) \right) \\
&=&
\int_\Omega \d P(\omega) 1_{F_s}(\omega) W^i_t(\omega) 
 E^{Q^\omega} \left(Y_s(\cdot,\omega) G(Y_r(\cdot,\omega), r \le s)\right),
\end{eqnarray*}
because $Y(\cdot, \omega)$ is a $Q^\omega$-martingale for $P$-almost all
 $\omega$.
To obtain the right-hand side of   \eqref{EB1}
 it is enough to remember that  $W^i$ are 
$(\shg_t)$-martingales and that item a) 3) in Definition \ref{DWeakStrong}
holds.
This concludes the proof of Proposition \ref{PB1}.
\end{proof}

\section{The concept of double stochastic non-linear diffusion.}
\label{S3}

\setcounter{equation}{0}

We come back to the notations and conventions of the introduction and of Section \ref{S2}.
Let $x_0$ be a probability on $\R$.
\begin{defi}\label{DDoubleStoch}
\begin{enumerate}
\item[1)] We say that the double stochastic non-linear diffusion (DSNLD)
 driven by $\Phi$ (on the space $(\O,\shf,P)$ with initial condition $x_0$,
 related to the random field $\mu$ (shortly (DSNLD)$(\Phi, \mu, x_0)$)
 admits {\bf weak existence} if there is 
a measurable  random field $X : [0,T]
 \times \R \times \O \to \R$ with the following properties.
\subitem a) 
The problem (DSDE)$(\gamma,x_0)$  with
 $\gamma = \chi$ for some measurable
$\chi:[0,T] \times \R \times \Omega \rightarrow \R$
such that
 $\chi(t,\xi,\o)\in \Phi (X(t,\xi,\o))\d t \d\xi \d P$ a.e.
admits weak-strong existence.
\subitem b) $X = X(t,\xi, \cdot) \d \xi, t \in ]0,T]$,
 is the family of $\mu$-marginal laws of $Y$. In other words
$X$ constitutes the densities of those $\mu$-marginal laws.
\item[2)] A couple $(Y,X)$, such that $Y$ is a (weak-strong) solution to the \\
(DSDE)$(\chi,x_0)$, with $\chi$ 
as in item 1) a), which also fulfills 1) b),  is 
called {\bf weak solution} to the (DSNLD)$(\Phi,\mu,x_0)$. 
 $Y$ is also called double stochastic representation of the random field $X$.
\item[3)]
Suppose that, given two measurable 
 random fields $X^i: [0,T] \times \R \times \O\to \R,
 i = 1,2$ on $(\O, \shf, P, (\shf_t))$,
  and $Y^i$, on extended probability space $(\Omega_0^i, Q^i), i=1,2$,
such that $(Y^i, X^i)$ is a
  weak-strong solution  of 
(DSDE)$(\chi^i,x_0), i =1,2$ where
 $\chi^i \in \Phi (X^i)\  \d t \d\xi \d P$ a.e.,
we  always have that $(Y^1, W^1, \ldots, W^N)$ 
and   $(Y^2, W^1, \ldots, W^N)$ have the same law. Then 
we say that the (DSNLD)$(\Phi,\mu,x_0)$ admits
 {\bf weak uniqueness}.
\end{enumerate}
\end{defi}
\begin{rem} \label{RRRR}
If (DSNLD)$(\Phi,\mu,x_0)$ admits {\bf weak uniqueness} then 
the $\mu$-marginal laws of $Y$
 is uniquely determined, $P$-a.s., see Lemma \ref{LMargU}.
\end{rem}
The first connection between \eqref{PME} 
with $\psi(u) = \Phi^2(u) u$
and  (DNSLD)$(\Phi,\mu,x_0)$ is the following.
\begin{theo} \label{T32}
Let $(Y,X)$ be a solution of (DSNLD)$(\Phi,\mu,x_0)$. 
Then $X$
 is a solution to the SPDE \eqref{PME}.
\end{theo}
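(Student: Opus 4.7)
The plan is to apply It\^o's formula on the product space $(\Omega_0,\shg,Q)$ to the semimartingale $\varphi(Y_t) Z_t$ for test functions $\varphi \in \shs(\R)$ with compact support, where $Z_t := \she_t(\int_0^\cdot \mu(\d s, Y_s))$, and then take the conditional expectation $E^Q[\,\cdot\mid \shf_T](\omega)=E^{Q^\omega}[\cdot]$ to recover the equation \eqref{EDist} $P$-a.s.

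To get the It\^o decomposition, I would first observe that by Proposition \ref{PB1}, $[Y,W^i]=0$ for $1\le i\le N$ on $(\Omega_0,Q)$, so the cross-variation $[\varphi(Y),Z]$ vanishes. Using $\d Y_s = \chi(s,Y_s,\omega)\,\d B_s$ (with $\chi \in \Phi(X)$ a.e.) and $\d Z_s = Z_s\bigl(\sum_{i=1}^N e^i(Y_s)\,\d W^i_s + e^0(Y_s)\,\d s\bigr)$, integration by parts and It\^o's formula give
\begin{align*}
\varphi(Y_t)Z_t - \varphi(Y_0) =&\; \int_0^t Z_s\varphi'(Y_s)\chi(s,Y_s,\omega)\,\d B_s + \tfrac12 \int_0^t Z_s\varphi''(Y_s)\chi^2(s,Y_s,\omega)\,\d s \\
&\;+ \sum_{i=1}^N \int_0^t \varphi(Y_s)Z_s e^i(Y_s)\,\d W^i_s + \int_0^t \varphi(Y_s)Z_s e^0(Y_s)\,\d s.
\end{align*}

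Next I would apply $E^{Q^\omega}[\cdot]$ to both sides. The left-hand side becomes $\int_\R \varphi(\xi) X(t,\xi,\omega)\,\d\xi$ by the very definition of the $\mu$-marginal law (Definition \ref{mylau}). Three ingredients handle the right-hand side: (i) the $\d B$-integral vanishes under $E^{Q^\omega}$ since $B$ is a $Q^\omega$-Brownian motion by Definition \ref{DWeakStrong} and Remark \ref{R2.10}(a), with integrability coming from boundedness of $\varphi'$ and $\chi$ (as $\Phi$ is bounded on $\R_+^\star$) together with $E^Q[Z_s^2]\le \const$ from Proposition \ref{P28}(iii); (ii) ordinary Fubini on the two $\d s$-terms, combined with Definition \ref{mylau}, produces $\tfrac12\int_0^t\d s\int_\R \varphi''(\xi)\chi^2(s,\xi,\omega) X(s,\xi,\omega)\,\d\xi$ and $\int_0^t\d s\int_\R \varphi(\xi) e^0(\xi) X(s,\xi,\omega)\,\d\xi$; (iii) a conditional stochastic Fubini of the form
\begin{equation*}
E^{Q^\omega}\Bigl[\int_0^t H_s\,\d W^i_s\Bigr] = \int_0^t E^{Q^\omega}[H_s]\,\d W^i_s(\omega),\quad P\text{-a.s.},
\end{equation*}
applied to $H_s = \varphi(Y_s) Z_s e^i(Y_s)$. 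Setting $\eta(s,\xi,\omega) := \chi^2(s,\xi,\omega) X(s,\xi,\omega)$, one has $\eta \in \psi(X)$ a.e.\ since $\chi \in \Phi(X)$ and $\psi(u)=\Phi^2(u)u$ for $u>0$ with $\psi(0)=0$; the integrability of $\eta\,\varphi$ and the $L^2([0,T];L^1_{\mathrm{loc}})$-regularity of $X$ follow from boundedness of $\chi$ and the total-variation estimate of Proposition \ref{P28}(iv), while continuity and progressive measurability of $X$ in $\shs'(\R)$ follow from the continuity in $t$ of $\varphi(Y_t)Z_t$ and the adaptedness observation below. With the convention $W^0_t=t$, the resulting identity is precisely \eqref{EDist}.

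The main obstacle is step (iii), the conditional stochastic Fubini. I would establish it by Riemann sum approximation: for a partition $0=t^n_0<\cdots<t^n_n=t$, $\int_0^t H_s\,\d W^i_s$ is an $L^2(Q)$-limit of $\sum_k H_{t^n_k}(W^i_{t^n_{k+1}}-W^i_{t^n_k})$, and taking $E^{Q^\omega}$ commutes trivially with the finite sum and with the $\shf_T$-measurable Brownian increments. The crucial adaptedness point is that $\bar H_s(\omega) := E^{Q^\omega}[\varphi(Y_s) Z_s e^i(Y_s)]$ is $\shf_s$-measurable: since $\chi$ is $(\shf_t)$-progressively measurable, $Y_{[0,s]}(\cdot,\omega)$ depends on $\omega$ only through $\shf_s$, and $Z_s$ involves $W^i_{[0,s]}(\omega)$ which is itself $\shf_s$-measurable, so integrating out $\omega_1$ preserves this measurability. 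Continuity of $s\mapsto \bar H_s$ (via dominated convergence and path-continuity of $Y, Z$) then allows the Riemann sums on the right to converge in $L^2(P)$ to $\int_0^t \bar H_s\,\d W^i_s(\omega)$, while $L^2$-continuity of $E^Q[\,\cdot\mid \shf_T]$ delivers the matching convergence on the left.
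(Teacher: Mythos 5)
Your proof follows essentially the same route as the paper's: apply It\^o's formula to $\varphi(Y_t)Z_t$ on the product space using $[Y,W^i]=0$ from Proposition~\ref{PB1}, take $E^{Q^\omega}$, and justify the resulting conditional stochastic Fubini identity. The only deviation is the technical device for that Fubini step (the paper's Lemma~\ref{Lmulaw}): where the paper mollifies the Brownian increments via $\frac{W^i_{s+\varepsilon}-W^i_s}{\varepsilon}$ as in Theorem 2 of \cite{russoSem}, you use Riemann-sum approximation, both resting on the same key observation that $\omega\mapsto E^{Q^\omega}[H_s]$ is $\shf_s$-measurable.
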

\begin{proof}
Let $B$ denote the Brownian motion associated to $Y$ as 
a solution   to  (DSDE)$(\chi,x_0)$,
mentioned in item a)1)  of Definition \ref{DDoubleStoch},
with $\gamma = \chi$.
For $t \in [0,T]$, we set
\begin{eqnarray*}
Z_t &=& \she_t\(\int^\cdot_0 \mu (\d s,Y_s)\), \\
M_t &=& Z_t \exp \left(-\int_0^t e^0(Y_s) \d s \right), \ t\in [0,T]. 
\end{eqnarray*}
\begin{enumerate} 
\item We first prove that the first item of Definition \ref{DSPDE}
is verified.
By Proposition \ref{P28}, $(M_t, t \in [0,T])$ is a uniformly
integrable martingale. Consequently $t \mapsto Z_t$ is continuous
in $L^1(\Omega, Q)$. On the other hand the process $Y$ is continuous. 
This implies that $P$ a.e. $\omega \in \Omega$, $X \in C([0,T]; \shm(\R))$,
where $\shm(\R)$ is equipped with the weak topology.
This  implies that   $X \in C([0,T]; \shs'(\R))$.
Furthermore, for $P$ a.e. $\omega \in \Omega$, and $t\in ]0,T]$,
$X(t,\cdot, \omega) \in L^1(\R)$ and 
$\int_\R X(t,\xi,\omega) \d \xi = \Vert \Gamma(t,\cdot,\omega)\Vert_{\rm var}$.
By item iv) of Proposition \ref{P28}, it follows that $P$-a.s.
$$ X \in L^\infty([0,T];L^1(\R)) \subset L^2([0,T];L^1_{\rm loc}(\R)).$$ 
\item We prove now the validity of the third item of 
Definition \ref{DSPDE}. 
Let $\varphi\in \shs(\R)$ with compact support.
For simplicity of the formulation we suppose here $\psi$ to be single-valued.
Taking into account Proposition \ref{PB1}, we apply It\^o's formula to get
\begin{align*}
&\varphi(Y_t) Z_t = \varphi(Y_0)+\int^t_0\varphi' (Y_s) Z_s \d Y_s\\
&+\int^t_0 \varphi(Y_s) Z_s\(\mu (\d s,Y_s) -\frac12 \sum^N_{i=1} (e^i(Y_s))^2 \d s \)\\
&+\frac12 \int^t_0 \varphi''(Y_s) \Phi^2 (X(s,Y_s)) Z_s \d s\\
&+\frac12 \int^t_0 \varphi(Y_s) Z_s \left( \sum_{i=1}^N (e^i (Y_s))^2 \right) \d s.
\end{align*}
Indeed we remark that
\begin{equation*}
\int^t_0 \varphi' (Y_s) \d [Z,Y]_s = 0,
\end{equation*}
because
\begin{equation*}
[Z,Y]_t = \sum_{i=1}^N \int^t_0 e^i (Y_s) Z_s  \d [W^i,Y]_s = 0;
\end{equation*}
in fact $[W^i,Y]=0$ by Proposition \ref{PB1}.
So
\begin{align*}
\varphi (Y_t) Z_t &= \varphi(Y_0) + \int^t_0 \varphi' (Y_s) Z_s \Phi (X(s,Y_s)) \d B_s\\
&+ \int^t_0 \varphi (Y_s) Z_s \mu (\d s, Y_s)\\
&+\frac12 \int^t_0 \varphi'' (Y_s) \Phi^2 (X(s,Y_s)) Z_s \d s.
\end{align*}
Taking the expectation with respect to $Q^\o$ we get $\pas$,
\begin{align*}
\int_\R \d \xi \varphi (\xi) X(t,\xi) &= \int_\R \varphi(\xi)x_0(\d\xi) +
\sum_{i=0}^N \int^t_0 \d W_s^i
 \( \int_\R \d \xi \varphi(\xi) e^i (\xi) X(s,\xi)\)\\
&+ \frac12 \int_0^t \d s \int_\R \d \xi \varphi'' (\xi) \Phi^2 (X(s,\xi)) X(s,\xi),
\end{align*}
which implies the result. Indeed, in the previous equality,  we have used  
the lemma below.
\end{enumerate}
\end{proof}
\begin{lemma} \label{Lmulaw}
Let $ 1 \le i \le N$.
For $P$ a.e. $\omega \in \Omega$, we have
$$ E^{Q^\omega}\left( \int^t_0 \varphi (Y_s) Z_s e^i(Y_s) \d W^i_s \right)
(\cdot,\omega)  
= \int_0^t \d W^i_s(\omega) \int_\R \varphi(\xi) e^i(\xi) X(s,\xi,\omega) \d \xi.
$$
\end{lemma}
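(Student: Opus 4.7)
The plan is to approximate the It\^o integral on $(\Omega_0,\shg,Q,(\shg_t))$ by its Riemann sums and exploit that the increments $W^i_{s_{k+1}}-W^i_{s_k}$ are $\shf$-measurable, so that they pull out of the conditional expectation given $\shf$. The disintegration $Q(A\times F)=\int_F Q^\omega(A)\,dP(\omega)$ identifies $\omega\mapsto E^{Q^\omega}[F]$ with a regular version of $E^Q[F\mid\shf]$, so the claimed identity reduces to an equality between two conditional expectations: on the left, the conditional expectation given $\shf$ of an It\^o integral against $W^i$ computed on $(\Omega_0,Q)$; on the right, a genuine stochastic integral against $W^i$ on $(\Omega,\shf,P)$.

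Set $I := \int_0^t \varphi(Y_s)\, Z_s\, e^i(Y_s)\,dW^i_s$. The integrand is $(\shg_s)$-predictable (because $Y$ is $(\shg_t)$-adapted and $Z$ is a multiplicative functional of $Y$) and belongs to $L^2(Q\otimes ds)$, since $\varphi$ has compact support, $e^i$ is bounded, and $\sup_{s\le T}E^Q[Z_s^2]<\infty$ by Proposition \ref{P28} iii). For a partition $\pi\colon 0=s_0<\cdots<s_n=t$ set
$$
I_\pi := \sum_{k=0}^{n-1}\varphi(Y_{s_k})\, Z_{s_k}\, e^i(Y_{s_k})\,(W^i_{s_{k+1}}-W^i_{s_k});
$$
standard It\^o theory gives $I_\pi\to I$ in $L^2(Q)$ as $|\pi|\to 0$. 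Pulling each $\shf$-measurable increment out of $E^Q[\cdot\mid\shf]$ yields
$$
E^Q[I_\pi\mid\shf](\omega) = \sum_{k} g(s_k,\omega)\bigl(W^i_{s_{k+1}}(\omega)-W^i_{s_k}(\omega)\bigr),
$$
where, by Definition \ref{mylau} of the $\mu$-marginal laws,
$$
g(s,\omega) := E^{Q^\omega}\bigl[(\varphi e^i)(Y_s)\,Z_s\bigr] = \int_\R \varphi(\xi)\,e^i(\xi)\,X(s,\xi,\omega)\,d\xi.
$$

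Condition a)3) of Definition \ref{DWeakStrong} (applied to $\varphi e^i$, after approximation by bounded continuous functionals on path space, together with a uniform integrability argument based on Proposition \ref{P28}) guarantees that $g(\cdot,\cdot)$ is $(\shf_s)$-progressively measurable, and the bound $|g(s,\omega)|\le \norm{\varphi}_\infty\norm{e^i}_\infty\norm{\Gamma_s(\cdot,\omega)}_{\rm var}$ combined with Proposition \ref{P28} iv) yields enough integrability for $\int_0^t g(s,\cdot)\,dW^i_s$ to be well-defined on $(\Omega,\shf,P)$ and for the Riemann sums above to converge to it in $L^2(P)$. Since $E^Q[\cdot\mid\shf]$ is an $L^2$-contraction, one also has $E^Q[I_\pi\mid\shf]\to E^Q[I\mid\shf]$ in $L^2(P)$; matching the two limits of the same sequence $E^Q[I_\pi\mid\shf]$ gives the identity $P$-a.s. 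The main delicate point is precisely the $(\shf_s)$-progressive measurability of $g$, which is exactly what condition a)3) of Definition \ref{DWeakStrong} was designed to provide; everything else is a routine It\^o-Fubini-type computation.
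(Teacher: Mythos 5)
Your proof is correct and follows essentially the same strategy as the paper's: discretize the stochastic integral so that the $\shf$-measurable Brownian increments pull out of the conditional expectation $E^{Q^\omega}[\cdot]=E^Q[\cdot\mid\shf]$, identify the inner expectation with $\int\varphi e^i X(s,\cdot,\omega)\,\d\xi$ via the definition of the $\mu$-marginal law, and pass to the limit on both sides. The only cosmetic difference is that the paper uses the forward-difference regularization of the It\^o integral (Theorem~2 of \cite{russoSem}) with $L^1(Q)$ convergence and an a.e.-convergent subsequence, whereas you use Riemann sums with $L^2(Q)$ convergence and the $L^2$-contraction of conditional expectation; both work because the integrand is a.s.\ continuous and square-integrable.
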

\begin{proof} 
Since $ E^Q (\int_0^T (\varphi(Y_s) Z_s)^2 \d s) < \infty$, again the usual
regularization properties of the It\^o integral (see e.g.
  Theorem 2, \cite{russoSem}), 
give
$$ \lim_{\varepsilon \rightarrow 0} 
E^Q \left \vert \int_0^T \frac{W_{s+\varepsilon} - W_s}{\varepsilon}
\varphi(Y_s) Z_s e^i(Y_s) \d s
- \int_0^T   \varphi(Y_s) Z_s e^i(Y_s) \d W^i_s \right \vert = 0.$$
This implies the existence of a sequence $(\varepsilon_\ell)$ 
such that   $P$ a.e. $\omega \in \Omega$,  
\begin{eqnarray*}
 \lim_{\ell \rightarrow \infty} 
E^{Q^\omega} &&\left \vert \int_0^T 
\frac{W^i_{s+\varepsilon_\ell}(\omega)
 - W^i_s(\omega)} 
{\varepsilon_\ell}
\varphi(Y_s(\cdot,\Omega)) Z_s(\cdot,\omega) e^i(Y_s(\cdot,\omega)) \d s 
\right.\\
&-& \left.  \int_0^T  \varphi(Y_s(\cdot, \omega)) Z_s(\cdot,\omega)
 e^i(Y_s(\cdot,\omega)) \d W^i_s(\omega) \right \vert = 0. 
\end{eqnarray*}
So  $P$-a.e $\omega \in \Omega$, 
\begin{eqnarray} \label{C2}
\lim_{\ell \rightarrow \infty} 
E^{Q^\omega} &&  \left (\int_0^T \frac{W^i_{s+\varepsilon_\ell}(\o) -
 W^i_s(\omega)}
{\varepsilon_\ell}
\varphi(Y_s(\cdot,\o)) Z_s(\cdot,\o) e^i(Y_s(\cdot,\o)) \d s \right. 
\nonumber \\
&&\\
&-& \left. \int_0^T   \varphi(Y_s(\cdot,\o)) Z_s(\cdot,\o)
 e^i(Y_s(\cdot,\o)) \d W^i_s (\o) \right) = 0. \nonumber
\end{eqnarray}
The left-hand side \eqref{C2}, by Fubini's, gives
\begin{eqnarray*} 
&&  \int_0^T \frac{W^i_{s+\varepsilon_\ell}(\o) - W^i_s(\o)}{\varepsilon_l}
E^{Q^\omega} (\varphi(Y_s(\cdot,\o)) Z_s(\cdot,\o) e^i(Y_s(\cdot,\o)))\\
&=&   \int_0^T \frac{W^i_{s+\varepsilon_\ell}(\o) -
 W^i_s(\o)}{\varepsilon_\ell}
\int  \varphi(\xi) e^i(\xi) X(s, \xi,\o) \d \xi \\
&=& \int_0^T \d W^i_s \int  \varphi(\xi) e^i(\xi) X(s, \xi,\o) \d \xi,
\end{eqnarray*}
again by Theorem 2 of \cite{russoSem}.

\end{proof}

\section{The densities of the $\mu$-marginal laws}

\label{S4}

\setcounter{equation}{0}

This section constitutes an important step towards the double probabilistic representation
of a solution to \eqref{PME}, when $\psi$ is non-degenerate. 
Let $x_0$ be a fixed probability on $\R$.
We remind that a process $Y$ (on a suitable enlarged probability space $(\O_0, \shg, Q)$), which 
is a weak solution to the (DSNLD)$(\Phi,\mu,x_0)$, is in particular a weak-strong solution of a (DSDE)$(\gamma,x_0)$ 
where $\gamma:[0,T] \times \R \times \O \rightarrow \R$ is some suitable 
progressively measurable random field on $(\O,\shf,P)$.
The aim of this section is twofold.
\begin{enumerate}
\item[A)] To show that whenever $\gamma$ is a.s. bounded and non-degenerate, (DSDE)$(\gamma,x_0)$ admit weak-strong existence and uniqueness.
\item[B)] The marginal $\mu$-laws of the solution to (DSDE)$(\gamma,x_0)$ admits a density for $P \; \o$ a.s.
\item[A)] We start discussing well-posedness.
\end{enumerate}
\begin{prop} \label{P4.1}
We suppose the existence of random variables $A_1, A_2$ such that
\begin{equation} \label{he4.2}
0< A_1 (\o) \leq \gamma (t,\xi,\o) \leq A_2 (\o) \quad \pas.
\end{equation}
Then (DSDE)$(\gamma,x_0)$ admits weak-strong existence and uniqueness.
\end{prop}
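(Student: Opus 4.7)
The key observation is that, for each frozen $\omega \in \Omega$, the hypothesis \eqref{he4.2} turns the SDE in \eqref{DWS} into a classical one-dimensional SDE with bounded, measurable and uniformly non-degenerate diffusion coefficient $\gamma(\cdot,\cdot,\omega)$ and no drift. So my plan is to build the random kernel $Q(\cdot,\omega)$ by solving, for each $\omega$ separately, the associated martingale problem, and then to check that the pieces fit together measurably in $\omega$ so that the filtration conditions 2) and 3) of Definition \ref{DWeakStrong} (a) are met.

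First, for each $\omega \in \Omega$ consider on the canonical path space $\Omega_1 := C([0,T])$ (equipped with $\shh := \shb(\Omega_1)$ and its canonical filtration $(\shy_t)$) the martingale problem associated with the time dependent operator $L^\omega_t f(\xi) = \tfrac12 \gamma^2(t,\xi,\omega) f''(\xi)$ with initial law $x_0$; see Remark \ref{R2.10} d). Because $\gamma(\cdot,\cdot,\omega)$ is bounded measurable and $\gamma^2 \geq A_1(\omega)^2 > 0$, the Stroock--Varadhan theory (in dimension one one may equivalently invoke Engelbert--Schmidt or a time-change argument) yields a unique solution, which I denote $Q(\cdot,\omega)$. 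The uniqueness of the solution of a well-posed martingale problem, together with a standard measurable-selection argument, ensures that $\omega \mapsto Q(\cdot,\omega)$ is a probability kernel on $\shh \times \Omega$; moreover, since $\gamma$ is $(\shf_t)$-progressively measurable, the restriction of $Q(\cdot,\omega)$ to $\shy_s$ depends on $\omega$ only through $\shf_s$, which is exactly condition 3) in Definition \ref{DWeakStrong} a). On the extended space $(\Omega_0,\shg,Q) = (\Omega_1 \times \Omega,\shh \otimes \shf,Q)$, define the process $Y_t(\omega_1,\omega) := \omega_1(t)$; under $Q^\omega$ the process $Y$ is a continuous semimartingale with quadratic variation $\int_0^t \gamma^2(s,Y_s,\omega)\,\d s$. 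Since $\gamma \geq A_1(\omega) > 0$, the process
\begin{equation*}
B_t(\cdot,\omega) := \int_0^t \gamma(s,Y_s(\cdot,\omega),\omega)^{-1}\,\d Y_s(\cdot,\omega), \qquad t \in [0,T],
\end{equation*}
is well defined, a continuous $Q^\omega$-local martingale with $\langle B\rangle_t = t$, hence a Brownian motion under $Q^\omega$ by L\'evy's characterization; by construction it solves \eqref{DWS} for $\omega$.

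It remains to verify that $W^1,\dots,W^N$ are $(\shg_t)$-martingales under $Q$, where $\shg_t = \shy_t \vee (\{\emptyset,\Omega_1\}\otimes \shf_t)$. Let $0 \le s < t \le T$ and take $H \in \shy_s$, $F \in \shf_s$, $1 \le i \le N$. Using the product structure of $Q$ and the $\shf_s$-measurability of $\omega \mapsto Q^\omega(H)$ established above,
\begin{align*}
E^Q\bigl(1_{H\times F}(W^i_t - W^i_s)\bigr) &= \int_\Omega 1_F(\omega)\,Q^\omega(H)\,(W^i_t(\omega)-W^i_s(\omega))\,\d P(\omega) \\
&= E^P\!\left(1_F\,Q^{\cdot}(H)\,\E^P(W^i_t - W^i_s \mid \shf_s)\right) = 0,
\end{align*}
which gives the martingale property. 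Weak-strong uniqueness follows in the same spirit: if $(\tilde Y,\tilde B)$ on another extension $(\tilde\Omega_0,\tilde\shg,\tilde Q)$ satisfies 1) and 2), then for $P$-a.e.\ $\omega$ the law of $\tilde Y(\cdot,\omega)$ under $\tilde Q^\omega$ solves the same well-posed martingale problem, hence equals $Q^\omega$; combining this $\omega$-wise identification with the independence of $W = (W^1,\dots,W^N)$ from the fibers (Remark \ref{R2.10} a)) yields that the joint laws of $(Y,W)$ and $(\tilde Y,W)$ agree.

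The only delicate point is the $\omega$-measurability of the construction, i.e.\ showing that $\omega \mapsto Q(\cdot,\omega)$ is genuinely a probability kernel and that its restriction to $\shy_s$ is $\shf_s$-measurable. This is where one must invoke the well-posedness of the parametrized martingale problem together with a measurable-selection / uniqueness argument; once these measurability statements are in hand, everything else reduces to one-dimensional, frozen-$\omega$, uniformly elliptic SDE theory plus L\'evy's theorem and a routine Fubini computation as above.
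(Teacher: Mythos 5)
Your overall plan is right, and the uniqueness half matches the paper exactly: freeze $\omega$, observe that for $P$-a.e.\ $\omega$ both $Y(\cdot,\omega)$ and $\tilde Y(\cdot,\omega)$ solve a one-dimensional SDE with measurable, bounded, uniformly non-degenerate coefficient, and invoke Exercise 7.3.3 of Stroock--Varadhan. The existence half, however, takes a genuinely different route, and the difference is concentrated exactly where you flag ``the only delicate point.'' You propose to solve the martingale problem for $L^\omega_t$ separately for each frozen $\omega$ and then appeal to a ``standard measurable-selection argument'' to conclude that $\omega \mapsto Q(\cdot,\omega)$ is a probability kernel whose restriction to $\shy_s$ is $\shf_s$-measurable. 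The paper does not take this abstract route. It instead constructs the kernel explicitly: it mollifies $\gamma$ to obtain Lipschitz coefficients $\gamma_n$, produces strong solutions $Y^n$ by Picard iteration on a product probability space (so bimeasurability of $Y^n$ and adaptedness to $\shf$ are built in at every stage), and then proves (Lemma \ref{Lconv}, via Proposition \ref{A4} in the Appendix) that for $P$-a.e.\ $\omega$ the laws $Q_n(\cdot,\omega)$ converge weakly to a $Q(\cdot,\omega)$ under which the canonical process solves the limiting SDE. Measurability of $\omega \mapsto Q(\cdot,\omega)$ then comes for free, as a pointwise weak limit of measurable kernels (Remark \ref{RConv1}). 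This constructive approach also keeps the dependence of $Q^\omega$ on $\gamma(\cdot,\cdot,\omega)$ restricted to $[0,s]$ explicit, which is what makes condition 3) of Definition \ref{DWeakStrong} a) checkable. Your abstract selection step is plausible but left unsubstantiated: you would need to cite or prove that the solution map of a well-posed parametrized martingale problem is Borel, and that $\shf_s$-measurability of the coefficient on $[0,s]$ propagates to $\shf_s$-measurability of $\omega \mapsto E^{Q^\omega}(G)$ for $G \in \shy_s$. The paper's mollification scheme is designed precisely to avoid having to establish these facts in the abstract. Finally, the Fubini computation with $E^P(W^i_t-W^i_s\mid\shf_s)=0$ that you give to verify that $W^1,\dots,W^N$ are $(\shg_t)$-martingales under $Q$ is the same as the paper's.
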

\begin{proof}
\textit{Uniqueness.} This is the easy part. Let $Y$ and $\wt Y$ be two solutions. Then for $\o$ outside a $P$-null set $N_0, Y(\cdot\;, \o)$ and $\wt Y (\cdot\;, \o)$ are solutions to the same one-dimensional classical SDE with measurable bounded and non-degenerate coefficients. Then, by Exercise 7.3.3
of  \cite{sv} the law of $Y(\cdot\;,\o)$ equals the law of $\wt Y(\cdot\;,\o)$. Then obviously the law of $Y$ equals the law of $\wt Y$.

\medskip
\textit{Existence.} This point is more delicate. In fact one needs 
to solve the random SDE for $P$ almost all $\o$  but in such
a way that the solution produce bimeasurable processes $Y$ and $B$.

First we regularize the coefficient $\gamma$. Let $\phi$ be a mollifier
 with compact support; we set
\begin{equation*}
\phi_n (x) = n\phi (nx), \; x\in \R \; , \; n\in \N.
\end{equation*}
We consider the random fields $\gamma_n : [0,T]
 \times \R \times \O \to \R$ by $\gamma_n (t,x,\o) := \int_\R 
\gamma (t,x-y,\o) \phi_n (y) \d y$. 
\\
Let $(\wt \O_1, \wt \shh_1,\wt P)$ be a probability space where
 we can construct a random variable $Y_0$ distributed according to $x_0$ 
and an independent Brownian motion $B$.

In this way on $(\wt \O_1 \times \O, \wt \shh_1 \otimes \shf, \wt P \otimes P)$ we dispose of a random variable $Y_0$ and a Brownian motion independent of
 $\{\phi,\O\} \otimes \shf$. By usual fixed point techniques, it is possible to exhibit a (strong) 
solution of (DSDE)$(\gamma_n,x_0)$ on the 
overmentioned product probability space.
We can show that there is a unique solution  $Y = Y^n$ of
\begin{equation}\label{e4.1bs}
Y_t  = Y_0 + \int^t_0 \gamma_n (s,Y_s,\cdot) \d B_s.
\end{equation}
In fact, the maps 
\begin{equation*}
\Gamma_n : Z \mapsto \int_0^\cdot \gamma_n (s,Z_s,\o) \d B_s + Y_0,
\end{equation*}
where $\Gamma_n : L^2(\wt \O_1 \times \O; \; \wt P \otimes P) \to L^2 (\wt \O_1 \times \O, \wt P \otimes P)$ are Lipschitz; by usual Picard fixed point arguments one can show the existence of a unique solution $Z=Z^n$ in $L^2(\wt \O_1 \times \O; \; \wt P \otimes P)$.
We observe that, by usual regularization arguments for It\^o integral
 as in Lemma \ref{Lmulaw},
for $\omega$-a.s.,
$Y(\cdot,\omega)$ solves for $P$ a.e. $\omega \in \Omega$, equation
\begin{equation}\label{e4.1}
Y_t (\cdot\;,\o) = Y_0 + \int^t_0 \gamma_n (s,Y_s (\cdot\;,\o),\o) \d B_s,
\end{equation}
on $(\wt \O_1, \wt \shh_1,\wt P)$.
We consider now the measurable space $\O_0 = \O_1 \times \O$, where 
$\O_1 = C([0,T], \; \R) \times \R,$ 
equipped with product $\sigma$-field $\shg = \shb(\Omega_1) \otimes \shf$.
On that measurable space, we introduce  the
 probability measures $Q_n$ where $Q_n = \int_\O \d P(\o) Q_n (\cdot, \o)$ and
 $Q_n (\cdot, \o)$ being the law of $Y^n(\cdot\;,\o)$ for almost all fixed $\o$.
\\
We set $ Y_t(\o_1,\o)=\o_1(t)  $,
where $\omega_1(t) = \omega_1^0(t) + a$, if $\omega^1 = (\omega_1^0,a)$.
We denote by $( \shy_t, \; t\in[0,T])$ (resp. $(\shy^1_t)$)
 the canonical 
 filtration associated with $Y$ on $\Omega_0$ (resp. $\Omega_1$).
 The next step will be the following.

 \begin{lemma} \label{Lconv}
For almost all $\o  \ dP$ a.s.
$Q_n (\o,\;\cdot)$ converges weakly to $Q (\o, \cdot)$, where under
 $Q(\cdot, \o), \; Y(\cdot\;, \o)$ solves the SDE
\begin{equation}\label{e4.2}
Y_t(\cdot\;, \o) = Y_{0} + \int^t_0 \gamma (s,Y_s(\cdot\;, \o),\o) 
\d B_s(\cdot,\omega),
\end{equation}
where $B(\cdot, \omega)$ is an $(\shy^1_t)$-Brownian motion 
on $\Omega_1$.
\end{lemma}
\begin{proof} \
It follows directly from Proposition \ref{A4} of the Appendix.
\end{proof}
\begin{rem} \label{RConv1}
\begin{enumerate}
\item[1)] Since $Q_n (\cdot, \o)$ converges weakly to $Q (\cdot, \o)$,
 $\o \ dP$ a.s., then the limit (up to an obvious modification) is a measurable random kernel.
\item[2)] This also implies that $Y_n(\cdot, \o)$ converges stably to
 $Q (\cdot, \o)$. 
For details about the stable convergence the reader can consult 
\cite[section VIII 5. c]{jacod}.
\end{enumerate}
\end{rem}
The considerations above allow to conclude the proof of Proposition \ref{P4.1}
By Lemma \ref{Lconv}, $Q^\omega  = Q(\cdot,\omega)$ is
a random kernel, being a limit of random kernels.
Let us consider  the associated probability measure on the suitable
enlarged probability space $(\Omega_0,\shg,Q)$.
 We observe that $Y$ on $(\O_0,\shg)$
 is obviously measurable, because it is the canonical process $Y(\o_1,\o)=\o_1$.
Setting
\begin{equation*}
B_t(\cdot, \o)  = \int_0^t \frac{\d Y_s}{\gamma (s, Y_s,\o)} , 
\end{equation*}
we get $[B]_t(\cdot, \o) =t$ under $Q(\cdot\;, \o)$, so,
by L\'evy characterization theorem,  it is a Brownian motion.
 Moreover $B$ is bimeasurable.
The last point to check is that 
$W^1,\ldots, W^N$ are $(\shg_t)$-martingales,
where $\shg_t = (\shf_t \otimes \{\emptyset, \Omega_1\})
  \vee \shy_t, \ 0 \le t \le T$.
 \\
Indeed, we justify this immediately. Condider $0 < s \le t \le T$.
Taking into account monotone class arguments, given $F \in \shf_s$,
$G \in \shy^1_s$, $1 \le i \le N $, 
it is enough to prove that
\begin{equation} \label{ECondExp}
 E^Q ( F G W^i_t) = E^Q(FG W^i_s).
\end{equation}
We first observe that the r.v. 
$\omega \mapsto E^{Q^\omega}(G)$
is $\shf_s$-measurable. 
This happens because $Y$ is, under $Q^\omega$, 
a martingale with quadratic variation \\
$\left(\int_0^t \gamma^2_(s, Y_s(\cdot, \omega),\omega) \d s, 0 \le t \le 
T\right)$, 
i.e. with (random) coefficient which is $(\shf_t)$-progressively measurable. \\
Consequently, also using the fact that
 $W^i$ is an $(\shf_t)$-martingale and that $E^{Q^{\omega}} (G)$
is $\shf_s$-measurable by item a) 3) of Definition \ref{DWeakStrong},
 the left-hand side of previous equality gives
$$
 E^P ( F W^i_t  E^{Q^{\omega}} (G)) =
  E^P ( F W^i_s  E^{Q^{\omega}} (G)), 
$$
which constitutes the right-hand side of
\eqref{ECondExp}. 
This concludes the proof of the proposition.
\end{proof}
We go on now with step B) of the beginning of Section \ref{S4}.
\begin{prop} \label{P4.2}
We suppose the existence of r.v. $A_1,A_2$ such that
\begin{equation} \label{4.2bis}
0 < A_1(\o)\leq \gamma (t,\xi,\o)\leq A_2(\o), \forall
(t,\xi) \in [0,T] \times \R, \quad \text{ a.s.}
\end{equation}
Let $Y$ be a weak-strong solution to (DSDE)$(\gamma,x_0)$
and we denote by $(\nu_t(dy,\cdot), t \in [0,T])$, the $\mu$-marginal laws of process $Y$. 
\begin{enumerate}
\item  There is a  measurable function $q: [0,T] \times \R \times \Omega \rightarrow \R_+$
such that  $\d t \d P $ a.e., 
$\nu_t(\d y,\cdot) = q_t(y,\cdot) \d y$.
In other words the  $\mu$-marginal laws admit densities.
\item
\begin{equation}  \label{e4.3}
\int_{[0,T]  \times \R} q^2_t (y,\; \cdot) \d t \d y < \9 \quad \pas
\end{equation}
\item $q$ is an $L^2(\R)$-valued progressively measurable process.  
\end{enumerate}
\end{prop}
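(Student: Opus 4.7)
The plan is to exploit that, for $P$-a.e.\ fixed $\omega$, the process $Y(\cdot,\omega)$ solves under $Q^\omega$ the one-dimensional SDE
\[
Y_t = Y_0 + \int_0^t \gamma(s,Y_s(\cdot,\omega),\omega)\,dB_s(\cdot,\omega),
\]
whose diffusion coefficient is bounded and strictly non-degenerate, with random ellipticity constants $A_1(\omega), A_2(\omega)$ supplied by \eqref{4.2bis}. For such a uniformly parabolic one-dimensional equation, the classical Aronson upper bound on the transition kernel furnishes, outside a $P$-null set, a density $p_t(\cdot,\omega)$ of the law of $Y_t(\cdot,\omega)$ under $Q^\omega$ satisfying
\[
\|p_t(\cdot,\omega)\|_{L^\infty(\R)} \le \frac{C(\omega)}{\sqrt{t}}, \quad t\in(0,T],
\]
where $C(\omega)$ depends only on $A_1(\omega), A_2(\omega)$ (the bound exploits that $x_0$ is a probability measure).

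The second step transfers this information to the $\mu$-marginal law via the Dol\'eans weight $Z_t := \she_t\bigl(\int_0^\cdot\mu(ds,Y_s)\bigr)$, using Cauchy-Schwarz. For any $\varphi\in C_c(\R)$,
\begin{align*}
\Bigabs{\int_\R \varphi(y)\,\nu_t(dy,\omega)}^2
&= \bigabs{E^{Q^\omega}[\varphi(Y_t)Z_t]}^2 \\
&\le E^{Q^\omega}[\varphi(Y_t)^2]\,E^{Q^\omega}[Z_t^2]
\le \|p_t(\cdot,\omega)\|_{L^\infty}\,\|\varphi\|_{L^2}^2\,E^{Q^\omega}[Z_t^2].
\end{align*}
Thus $\varphi\mapsto\int\varphi\,d\nu_t(\cdot,\omega)$ extends by density to a continuous linear functional on $L^2(\R)$, and Riesz representation yields an $L^2$-density $q_t(\cdot,\omega)$ of $\nu_t(\cdot,\omega)$ satisfying
\[
\|q_t(\cdot,\omega)\|_{L^2}^2 \le \|p_t(\cdot,\omega)\|_{L^\infty}\,E^{Q^\omega}[Z_t^2].
\]
Proposition \ref{P28}~iii), combined with Doob's inequality and Fubini, gives $E^{Q^\omega}[\sup_{t\le T}Z_t^2]<\infty$ for $P$-a.e.\ $\omega$; integrating the previous inequality in $t$ and using the Aronson bound produces
\[
\int_0^T \|q_t(\cdot,\omega)\|_{L^2}^2\,dt
\le 2\,C(\omega)\sqrt{T}\;E^{Q^\omega}\bigl[\textstyle\sup_{t\le T}Z_t^2\bigr] < \infty \quad P\text{-a.s.},
\]
which proves (1) and \eqref{e4.3}.

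For the progressive measurability asserted in (3), one invokes the construction in Proposition \ref{P4.1}: $\omega\mapsto Q^\omega$ is a random kernel whose restriction to the canonical filtration of $(Y_s, s\le t)$ is $\shf_t$-measurable in $\omega$ (item a)~3) of Definition \ref{DWeakStrong}). Consequently the distribution functions $y\mapsto Q^\omega(Y_t\le y)$ and their weighted perturbations by $Z_t$ are jointly measurable in $(t,y,\omega)$ and $\shf_t$-measurable in $\omega$ for each $t$; a measurable choice of Radon-Nikodym derivative in $y$ (e.g.\ as the $\limsup$ of dyadic difference quotients) then produces the required jointly and progressively measurable version of $q_t(y,\omega)$.

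The quenched analytic estimates are fairly mechanical once the Aronson bound and Cauchy-Schwarz are in place; the real subtlety, which I expect to be the main obstacle, is to pass from per-$\omega$ existence of the density to a \emph{genuinely progressively measurable} version of $q$, carefully exploiting the random-kernel structure of $(Q^\omega)$ together with the measurable-selection features already secured by the proof of Proposition \ref{P4.1}.
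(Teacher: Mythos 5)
Your proof takes a genuinely different route from the paper's. To control the $\mu$-marginal law you apply Cauchy--Schwarz to decouple the test function $\varphi(Y_t)$ from the Dol\'eans weight $Z_t$, and you then bound $E^{Q^\omega}[\varphi(Y_t)^2]$ by appealing to an Aronson-type \emph{pointwise} bound $\norma{p_t(\cdot,\omega)}_{L^\infty}\le C(\omega)t^{-1/2}$ on the unweighted marginal density, integrating the resulting inequality in $t$. The paper does something quite different: it estimates the time-integrated weighted occupation $E^{Q^\omega}\bigl[\int_0^T\varphi(s,Y_s)Z_s\,\d s\bigr]$ directly, first bounding the stochastic-integral part of $Z$ pathwise via an It\^o integration by parts (crucially using $[Y,W^i]=0$ from Proposition~\ref{PB1}), which produces an explicit exponential martingale $R$; a Girsanov change of measure then removes $R$ at the cost of adding a bounded drift to $Y$, after which the Krylov occupation-time estimate (Exercise 7.3.3 of \cite{sv}) gives the $L^2([0,T]\times\R)$ bound directly.

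The point you should be careful about is precisely the Aronson step, which you invoke without justification. The classical Aronson Gaussian upper bound, including the Nash/Fabes--Stroock proofs, applies to \emph{divergence-form} parabolic operators with bounded measurable coefficients. The density of $Y_t$ under $Q^\omega$ solves the forward Kolmogorov equation $\partial_t p=\tfrac12\,\partial_{\xi\xi}^2(\gamma^2 p)$, which is in double-divergence (equivalently, whose adjoint is in non-divergence) form, and here $\gamma$ is only bounded and Borel in $(t,\xi)$. For such operators a pointwise bound of Aronson type is not a standard off-the-shelf fact (and in non-divergence form with merely measurable time-dependent coefficients, obtaining pointwise Gaussian bounds is genuinely delicate); you need either a reference covering this specific regularity class in $d=1$, or an independent argument. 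The paper's choice of the weaker Krylov $L^2$-in-$(t,x)$ estimate, which is robust to measurable coefficients and is exactly what item~2 of the proposition requires, looks deliberate on the authors' part: it avoids this issue entirely. So while your Cauchy--Schwarz decoupling is slicker than the paper's Girsanov manoeuvre, your argument as written rests on a claim stronger than what the paper uses and stronger than what the statement demands; as it stands, that step is a gap unless the Aronson bound for this class of 1D measurable-coefficient equations is properly sourced. Your treatment of the measurability in item 3 (dyadic difference quotients) is more ad hoc than the paper's one-line observation that the $\mu$-marginal laws are already an $\shs'(\R)$-valued progressively measurable process by Definition~\ref{DWeakStrong}~a)~3), but it is not itself wrong.
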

\begin{proof}
By 3) of Definition \ref{DWeakStrong}, the $\mu$-marginal laws constitute
an $\shs'(\R)$-valued progressively measurable process.
Consequently  3. holds if 1. and 2. hold.

Let
\begin{equation*}
B_t := \int_0^t \frac{\d Y_s}{\gamma(s,Y_s,\o)}.
\end{equation*}
We denote again $Q^\o:=Q(\cdot\;, \o)$ according to Definition \ref{DWeakStrong}, $\omega \in \Omega$.\\
Let $\o \in\O$ be fixed. Let $\varphi:[0,T] \times \R\to \R$ be a continuous function with 
compact support.
We need to evaluate
\begin{equation}\label{e4.4}
E^{Q^\o} \( \int_0^T \varphi (s,Y_s) Z_s \d s \),
\end{equation}
where $Z_s = \exp \(\int_0^s e^0 (Y_r)\d r \).\; M_s$ and
 $M_s = \she_s \(\sum^N_{i=1} \int^\cdot_0 e^i (Y_r)\d W_r^i \).$
$M_s$ is smaller or equal than
\begin{align*}
& \exp \( \sum^N_{j=1} \int^s_0 e^j \(Y_r\) \d W^j_r\)\\
= & \exp \( \sum^N_{j=1} \left\{ W_s^j e^j(Y_s) -
 \int^s_0 W^j_r (e^j)'(Y_r) 
\d Y_r \right \} \) \\
&\times \exp \( -\frac{1}{2} \int_0^s \sum^N_{j=1} \left\{ W_r^j (e^j)''(Y_r)  
\gamma^2(r, Y_r, \cdot)  \d r 
- \frac{1}{2} W^j_r (e^j)'(Y_r) \gamma^2(r, Y_r, \cdot) \d r 
\right  \} \),
\end{align*}
taking into account the fact that $[Y, W^j] = 0$ for any $1 \le j \le n$,
by Proposition \ref{PB1}.\\
Denoting $\Vert g \Vert_\infty : = \sup_{t \in [0,T]} \vert g(t) \vert,$
for a function $g:[0,T] \rightarrow \R$,
\eqref{e4.4} is  smaller or equal than
$$ \exp \( \sum^N_{j=1}  \Vert W^j \Vert_\infty
 (\norm{e^j}_\infty + \frac{T}{2}  \norm{(e^j)''}_\infty A_2^2(\omega)) \)
 \exp \( - \int_0^s \left[ \sum^N_{j=1}  W^j_r (e^j)'
 (Y_r)
\gamma (r,Y_r,\o)\right]\d B_r\).$$
So \eqref{e4.4} is bounded by
\begin{equation}\label{e4.5}
\varrho(\o) E^{Q^\o} \(\int^T_0 \vert \varphi \vert (s,Y_s) R_s \d s\),
\end{equation}
where
\begin{eqnarray*}
\varrho(\o) &=& \exp \left(T\norm{e_0}_\9 + \sum^N_{i=1} \norm{W^i}_\9 
\norm{e^i}_\9 \right.\\
 &+&  \left. T \frac{A_2^2(\omega)}{2} \sum^N_{i=1} 
(\norm{W^i}^2_\9 \norm{(e^i)'}^2_\9 + \Vert W^i \Vert_\infty \Vert 
(e^i)''\Vert_\infty) \right)
\end{eqnarray*}
and $R$ is the $Q^\o$-exponential martingale
\begin{eqnarray*}
R_t (\; \cdot \;, \o) = \exp \bigl( &-&\int^t_0 \delta (r, \; \cdot \;,\o) 
\d B_r  \\
  &-& \frac12 \int^t_0 \delta^2 (r,\;\cdot\;,\o)\d r \bigr).
\end{eqnarray*}
where
\begin{equation*}
\delta (r,\;\cdot\;,\o) = \sum^N_{j=1} W_r^j (e^j)' \(Y_r(\; \cdot \; , \o)\)
 \gamma \(r,Y_r(\;\cdot\;,\o), \o \).
\end{equation*}
So there is a random (depending on $(\O,\shf)$) constant
\begin{equation}\label{e4.6}
\varrho_1 (\o) := {\rm const} \(T,W^j,\norm{e^j}_\9,\norm{e^{j'}}_\9,
\norm{e^{j''}}_\9,
 \; 1\leq j \leq N,\; A_2(\o)\),
\end{equation}
so that \eqref{e4.5} is smaller than
\begin{equation}\label{e4.7}
\varrho_1(\o) E^{Q^\o} \( \int^T_0 
\vert \varphi(s,Y_s(\;\cdot\;,\o))\vert \d s R_T (\;\cdot\;,\o) \).
\end{equation}
By Girsanov theorem,
\begin{equation*}
\wt B_t(\cdot,\o) = B_t(\cdot, \o) + \int_0^t \delta(r,\;\cdot\;,\o)\d r
\end{equation*}
is a $\wt Q^\o$-Brownian motion with
\begin{equation*}
\d \wt Q^\o = R_T (\;\cdot\;,\o) \d Q^\o.
\end{equation*}
At this point, the expectation in \eqref{e4.7} gives 
\begin{equation}\label{e4.8}
E^{\wt Q^\o} \(\int^T_0  \vert \varphi \vert(s,Y_s (\;\cdot\;,\o)) \d s \),
\end{equation}
where
\begin{align*}
Y_t (\;\cdot\;,\o) &= Y_0 + \int^t_s \gamma (s,Y_s(\;\cdot\;,\o),\o)
 \d \wt B_s\\
&- \int_0^t \gamma (s,Y_s (\;\cdot\;,\o), \o) \delta (s, \;\cdot\;,\o) \d s.
\end{align*}
For fixed $\o\in\O$, $\delta$ is bounded by a random constant $\varrho_2(\o)$ of the type \eqref{e4.6}.
Moreover we keep in mind assumption \eqref{e4.1} on $\gamma$.
By Exercise 7.3.3 of \cite{sv}, \eqref{e4.8} is bounded by
\begin{equation*}
\varrho_3(\o)\norm{\varphi}_{L^2 ([0,T]\times \R)}.
\end{equation*}
where $\varrho_3(\o)$ again depends on the same quantities as in \eqref{e4.6} and $\Phi$.
So for $\o \pas$,
 the map $\varphi \mapsto E^{Q^\o} \(\int_0^T \varphi
 (s,Y_s (\;\cdot\;,\o)) \wt M_s (\;\cdot\;,\o) \d s\right)$ prolongates to $L^2([0,T]\times \R)$. Using Riesz theorem
 it is not difficult to show the existence of an $L^2([0,T]\times\R)$
function $(s,y)\mapsto q_s (y,\o)$ which constitutes indeed the density of the family of the $\mu$-marginal laws.
\end{proof}

\section{On the uniqueness of a Fokker-Planck type SPDE}
\label{S5}

\setcounter{equation}{0}

The theorem below plays the analogous role as Theorem 3.8 in \cite{BRR1}
or Theorem 3.1 in \cite{BCR2}.
It has an interest in itself since it is
 a Fokker-Planck SPDE with possibly degenerate measurable coefficients.

\begin{theo} \label{T51}
Let $z^1, z^2$ be two measurable random fields belonging $\o$ a.s. to 
$C([0,T],\shs'(\R))$ such that $z: ]0,T]\times \O \to \shm (\R)$.
 Let $a:[0,T]\times\R\times\O\to\R_+$ be a bounded measurable random field
such that, for any $t \in [0,T]$, $a(t, \cdot)$ is
$\shb([0,t]) \otimes \shb(\R) \otimes  \shf_t$-measurable. 
We suppose moreover the following.
\begin{enumerate}
\item[i)] $z=z^1-z^2 \in L^2 ([0,T]\times \R)$ a.s.
\item[ii)] $t \mapsto z(t,\cdot)$ is $(\shf_t)$-progressively measurable
$\shs'(\R)$-valued process.
\item[iii)] $z^1,z^2$ are solutions to
\begin{align}\label{e5.1}
\begin{cases} 
\partial_t z(t,\xi) = \partial^2_{\xi\xi} ((a z)(t,\xi)) + z(t,\xi) 
\mu (\d t, \xi),\\
z(0,\;\cdot\;) = z_0,
\end{cases}
\end{align}
where $z_0$ is some distribution in $\shs'(\R)$.
\end{enumerate}
Then $z^1 \equiv z^2$.
\end{theo}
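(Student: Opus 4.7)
The plan is to exploit linearity of \eqref{e5.1} to reduce to the case $z(0,\cdot)=0$, and then to deduce $z\equiv 0$ via a Gronwall estimate on $\E\|z(t,\cdot)\|_{H^{-1}}^2$. Setting $z:=z^1-z^2$, we obtain an $(\shf_t)$-progressively measurable random field which $P$-a.s.\ belongs to $C([0,T];\shs'(\R))\cap L^2([0,T]\times\R)$ and solves \eqref{e5.1} with vanishing initial datum. Since $L^2(\R)\hookrightarrow H^{-1}(\R)$, the quantity $\|z(t,\cdot)\|_{H^{-1}}^2$ is finite, and the identity $(I-\Delta)^{-1}\partial_{\xi\xi}^2=(I-\Delta)^{-1}-I$ turns the drift contribution of $\partial_{\xi\xi}^2(az)$ into a monotone term $-\int az^2\,\d\xi\le 0$ plus a controllable remainder.

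Concretely I would mollify in the spatial variable, $z^\eps:=z*_\xi\rho_\eps$, apply It\^o's formula to $\|z^\eps(t)\|_{H^{-1}}^2=\langle z^\eps(t),(I-\Delta)^{-1}z^\eps(t)\rangle_{L^2}$, and then let $\eps\downarrow 0$. Modulo a local martingale the resulting drift reads
\begin{align*}
\tfrac12\frac{\d}{\d t}\|z(t)\|_{H^{-1}}^2
= -\int_\R az^2\,\d\xi + \langle az,z\rangle_{H^{-1}} + \langle e^0 z,z\rangle_{H^{-1}} + \tfrac12\sum_{i=1}^N\|e^iz\|_{H^{-1}}^2.
\end{align*}
The $\eps\downarrow 0$ passage is standard since $a\in L^\infty$ and $z\in L^2$ imply $(az)^\eps\to az$ and $(e^iz)^\eps\to e^iz$ in $L^2([0,T]\times\R)$.

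The crux is to bound the indefinite cross-term $\langle az,z\rangle_{H^{-1}}$ using only the sign of $a$. I would use the weighted Cauchy--Schwarz inequality, which is allowed because $a\ge 0$:
\begin{equation*}
|\langle az,z\rangle_{H^{-1}}|
= \left|\int (I-\Delta)^{-1}z\cdot az\,\d\xi\right|
\le \|a\|_\infty^{1/2}\Bigl(\int az^2\,\d\xi\Bigr)^{1/2}\|(I-\Delta)^{-1}z\|_{L^2},
\end{equation*}
and since $\|(I-\Delta)^{-1}z\|_{L^2}=\|z\|_{H^{-2}}\le\|z\|_{H^{-1}}$, Young's inequality yields $2|\langle az,z\rangle_{H^{-1}}|\le \int az^2\,\d\xi+\|a\|_\infty\|z\|_{H^{-1}}^2$, which is absorbed by half of the monotonicity contribution $-2\int az^2\,\d\xi$. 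The $H^{-1}$-multiplier hypothesis of Assumption \ref{E1.0} gives $\|e^iz\|_{H^{-1}}\le \shc(e^i)\|z\|_{H^{-1}}$, and analogously for $e^0$, so the full drift is bounded by $C(\omega)\|z(t)\|_{H^{-1}}^2$, where $C(\omega)$ depends on $\|a(\cdot,\omega)\|_\infty$ and on the multiplier norms $\shc(e^i)$.

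Finally I would localize the residual local martingale by stopping times $(\tau_n)$ (using $\int_0^T\|z(s)\|_{H^{-1}}^2\,\d s<\infty$ a.s., a consequence of $z\in L^2([0,T]\times\R)$), take expectations and apply Gronwall's lemma to obtain $\E[\|z(t)\|_{H^{-1}}^2\mathbf{1}_{\{\tau_n>t\}}]=0$ for every $n$, whence $z\equiv 0$. The main obstacle I anticipate is the rigorous justification of It\^o's formula in the $H^{-1}$ framework together with the $\eps\downarrow 0$ limit passage: one must verify that the commutator $(az)^\eps-a\,z^\eps$ converges to $0$ in a topology compatible with the Gronwall estimate and that $\sum_i\|(e^iz)^\eps\|_{H^{-1}}^2\to\sum_i\|e^iz\|_{H^{-1}}^2$ in $L^1(\d t)$; both are amenable to standard mollification arguments given the $L^\infty$ bound on $a$ and the multiplier property of the $e^i$'s.
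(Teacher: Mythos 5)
Your proposal is correct and follows essentially the same path as the paper's proof of Theorem \ref{T51}: spatial mollification, It\^o's formula for $\norm{z_\varepsilon(t)}^2_{H^{-1}}$, the decomposition $(I-\Delta)^{-1}\partial^2_{\xi\xi}=(I-\Delta)^{-1}-I$ to extract the monotone term $-\int az^2\,\d\xi$ plus a controllable remainder, the $H^{-1}$-multiplier bounds on the $e^i$ terms, the passage $\varepsilon\downarrow 0$, and Gronwall. Your weighted Cauchy--Schwarz step arrives at exactly the estimate the paper obtains from $2bc\leq b^2/\norm{a}_\infty+c^2\norm{a}_\infty$ together with $\norm{az}_{L^2}^2\leq\norm{a}_\infty\langle z,az\rangle_{L^2}$.

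The one detail I would flag is the localization. Stopping on $\int_0^t\norm{z(s)}^2_{H^{-1}}\,\d s$ alone does not guarantee that the stopped local martingale is a true martingale with $E(M_{t\wedge\tau_n})=0$: the quadratic variation $\langle M\rangle_t=\sum_i\int_0^t\langle z,e^iz\rangle^2_{H^{-1}}\,\d s$ involves a \emph{quartic} quantity in $z$, which is estimated by $\norm{e^iz}_{L^2}^2\norm{z}_{H^{-2}}^2\leq\norm{e^i}^2_\infty\norm{z}_{L^2}^2\norm{z}_{H^{-2}}^2$. The paper therefore localizes on both $\int_0^t\norm{z(s)}^2_{L^2}\,\d s$ (finite a.s.\ by hypothesis i)) and $\sup_{s\leq t}\norm{z(s)}^2_{H^{-2}}$ (finite a.s.\ by the continuity $z\in C([0,T];H^{-2})$ established in Remark \ref{R5.2} b)), which makes $\langle M\rangle_{\tau_n}$ bounded and the stopped process a genuine $L^2$-martingale. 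With this adjustment your argument goes through unchanged.
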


\begin{rem} \label{R5.2}
\begin{enumerate}
 \item[a)] By solution of equation \eqref{e5.1} we intend,
 as expected, the following: for every $\varphi \in \shs(\R), \ \forall t \in [0,T]$,
\begin{align}\label{e5.2}
\int_\R \varphi(x) z(t,\d \xi) &= \left< x_0, \varphi \right>  + \int^t_0 \d s \int_\R a (s,\xi)\varphi''(\xi) z(s,\d \xi)\\
&+\int_{[0,t]\times \R} \mu (\d s, \xi) z(s,\d \xi) \varphi(\xi) \quad \text{a.s.}
\end{align}
\item[b)] Since $z(\cdot,\;\omega)$ is $\o$ a.s. in 
$L^2([0,T];L^2(\R)) \; \subset L^2([0,T];H^{-1}(\R))$,
 then $\int^t_0\mu(\d s, \; \cdot\;) z(s, \cdot)$ 
belongs  $\o$ a.s. to $C([0,T];H^{-1}(\R))$.
On the other hand  $\int^t_0 (az)''(s, \cdot)\d s$ can be seen as Bochner integral in $H^{-2}(\R)$ and
so $t \mapsto  \int^t_0\mu(\d s, \; \cdot\;) z(s, \cdot)$ belongs to
$ C([0,T];H^{-2}(\R))$ $\o$ a.s.
In particular any solutions $z^1,z^2$ to \eqref{e5.1} are such that 
$z = z^1 - z^2$
admits a modification whose paths belong (a.s.)
to  $C([0,T];H^{-2}(\R)) \cap L^2([0,T];L^2(\R))$. Since $z^i, i =1,2$, are continuous with values in $\shs'(\R)$,
then their difference is indistinguishable  with the mentioned modification.
\\
Consequently for $\o$ a.s. $z(t,\cdot) \in C([0,T];H^{-2}(\R))$.
 Then, outside a $P$-null set $N_0$, for $\o\in N_0$ 
we have (in $\shs'(\R)$ and  $H^{-2}(\R))$ )
\begin{equation}\label{e5.3}
z(t,\cdot) = \int^t_0 (az)''(s,\cdot)\d s+\int_0^t \mu (\d s,\cdot)z(s,\cdot).
\end{equation}
\item[c)] By assumption i),
possibly enlarging the $P$-null set $N_0$ we get the following.
For $\o\notin N_0$,
for almost all $t\in ]0,T]$,
$ \( \int^t_0 (az) (s,\cdot) \d s\right)''\in H^{-1}(\R)$ and so 
$\int^t_0 (az) (s,\;\cdot\;) \d s\in H^1\; \d t$ a.e.
\end{enumerate}
\end{rem} 

\begin{proof}  [Proof of Theorem \ref{T51}]
We fix the null set $N_0$ and so $\o$ will always lie outside $N_0$ related to Remark \ref{R5.2} c).
Let $\phi$ be a mollifier with compact support and $\phi_\varepsilon = \frac{1}{\varepsilon} \phi (\frac{\cdot}{\varepsilon})$ be a  generalized 
sequence of mollifiers converging to the Dirac delta function. We set
$$
g_\varepsilon(t) = \norm{z_\varepsilon (t)}^2_{H^{-1}}
= \int_\R \d \xi z_\varepsilon (t,\xi) ((I-\Delta)^{-1} z_\varepsilon )(t,\xi), 
$$
where $z_\varepsilon(t,\xi) = \int_\R \phi_\varepsilon (\xi-y) z(t,\d y) $.
Since $t \mapsto z(t,\;\cdot\;)$ is continuous in $H^{-2}(\R)$, then 
$t\mapsto z_\varepsilon (t,\;\cdot\;)$ is continuous in $L^2(\R)$ and so also in $H^{-1}(\R)$.
 We look at the equation fulfilled by $z_\varepsilon$. The identity \eqref{e5.3}  produces the  following equality in $L^2(\R)$ and so in $H^{-1}(\R)$:
\begin{align}\label{e5.4}
z_\varepsilon (t,\;\cdot\;) 
&= \int^t_0 \d s \left\{ \left[ \( a(s,\;\cdot\;) z (s,\;\cdot\;) \)
\star \phi_\varepsilon \right]'' - \(a(s,\;\cdot\;)z(s,\;\cdot\;)\)\star 
\phi_\varepsilon\right\}\d s  \nonumber\\
&+ \int^t_0 \d s(a(s,\;\cdot\;)z(s,\;\cdot\;))\star \phi_\varepsilon\\
&+ \sum_{i=1}^N \int^t_0 \d W^i_s  (e_i z)(s,\;\cdot\;) \star \phi_\varepsilon. \nonumber
\end{align}
We apply $(I-\Delta)^{-1}$ and we get
\begin{align} \label{e5.4bis}
(I-\Delta)^{-1} z_\varepsilon (t,\;\cdot\;) &= -\int^t_0 \d s (a(s,\;\cdot\;)
 z(s,\;\cdot\;)) \star \phi_\varepsilon\\
&+ \int^t_0 \d s (I-\Delta)^{-1} \left[ \(a(s,\;\cdot\;) z (s,\;\cdot\;) \)\star \phi_\varepsilon \right]\\
&+ \sum_{i=1}^N \int^t_0 \d W^i_s (I-\Delta)^{-1} (e^i z)(s,\;\cdot\;) \star \phi_\varepsilon.
\end{align}
We apply It\^o's formula for stochastic calculus, with values 
in the Hilbert space $H^{-1}(\R)$. For a general introduction to
 infinite dimensional Hilbert valued calculus, see \cite{dpz} or \cite{rockpre}. We evaluate the $H^{-1}$-norm of $z_\varepsilon(t)$. 
Taking into account, \eqref{e5.4}, \eqref{e5.4bis} and that $\left<f,g\right>_{H^{-1}}=\left<f,(I-\Delta)^{-1}g\right>_{L^2}$, it gives
\begin{align}\label{e5.5}
 g_\varepsilon (t) &= 2\int^t_0 \left<z_\varepsilon (s,\;\cdot\;),\;\d z_\varepsilon (s,\;\cdot\;)\right>_{H^{-1}}  \\
&+ \sum_{i=1}^N \int^t_0 \d s\left< (z (s,\;\cdot\;) e^i) \star \phi_\varepsilon, \;  
(z (s,\;\cdot\;) e^i) 
 \star \phi_\varepsilon  \right>_{H^{-1}} \\
&= -2\int^t_0 \< z_\varepsilon (s,\;\cdot\;),\(a(s,\;\cdot\;)z(s,\;\cdot\;)\) \star \phi_\varepsilon \>_{L^2} \d s  \\
&+ 2\int^t_0 \d s \< z_\varepsilon (s,\;\cdot\;),\; (I-\Delta)^{-1} \( \( a(s,\;\cdot\;)z(t,\;\cdot\;)\)\star\phi_\varepsilon \)\>_{L^2} \\
&+ \sum_{i=1}^N \int^t_0 \d s\left< (z (s,\;\cdot\;) e^i) \star \phi_\varepsilon,
 \;  (z (s,\;\cdot\;) e^i) 
 \star \phi_\varepsilon  \right>_{H^{-1}} \\
&+ 2  \int^t_0 \d s \< z_\varepsilon (s,\;\cdot\;), (ze^0)(s,\cdot)
 \star \phi_\varepsilon \right>_{H^{-1}} + M^\varepsilon_t
\end{align}
 where
\begin{equation}\label{e5.6}
  M^\varepsilon_t = 2 \sum^N_{i=1} \int_0^t 
\left< z_\varepsilon (s,\;\cdot\;),\; (e^i z)(s,\;\cdot\;) \star \phi_\varepsilon
\right>_{H^{-1}} \d W^i_s.
\end{equation}
Below we will justify that \eqref{e5.6} is well-defined.
We summarize \eqref{e5.5} into
\begin{equation*}  
g_\varepsilon(t) = \wt g_\varepsilon(t)+M^\varepsilon_t, t \in [0,T].
\end{equation*}
We remark that 
\begin{align} \label{e5.6bis}
\sum^N_{i=1} \int_0^T \left (\< z
 (s,\;\cdot\;),\; e^i z (s,\;\cdot\;) \>_{H^{-1}}\right)^2  \d s 
&= \sum^N_{i=1} \int_0^T \left (\< e^i z (s,\;\cdot\;),\; (I - \Delta)^{-1} z (s,\;\cdot\;) \>_{L_2}\right)^2  \d s  \\
& \le \sum^N_{i=1} \int_0^T \Vert  e^i z (s,\;\cdot\;)\Vert_{L^2}^2 \Vert z(s,\cdot) \Vert_{H^{-2}}^2 \d s \\
&\le  \sum_{i=1}^N \Vert e^i\Vert_\infty^2
\sup_{s \in [0,T]} \Vert z(s,\cdot) \Vert_{H^{-2}}^2 
 \int_0^T \Vert z (s,\;\cdot\;)\Vert_{L^2}^2  \d s,  
\end{align}
because $z: [0,T] \rightarrow H^{-2}$ is a.s. continuous by Remark \ref{R5.2} b).

Consequently
\begin{equation*}
M_t = \sum^N_{i=1} \int^t_0 \< z (s,\;\cdot\;),\; e^i z(s,\;\cdot\;)\>_{H^{-1}}
 \d W^i_s
\end{equation*}
is a well-defined local martingale.
It is also not difficult to show that 
for $\varepsilon > 0$, 
$$ \int^T_0 \left\{ \sum^N_{i=1} \< z_\varepsilon
  (s,\;\cdot\;),\; (e^i z(s,\;\cdot\;)) \star \phi_\varepsilon \>_{H^{-1}}^2 \right\}^2 \d s < \infty,$$
and so $M^\varepsilon$ is a local martingale.

By assumption we have of course $(\o \not \in N_0)$
\begin{equation}\label{e5.7}
\int_{[0,T]\times \R} (z_\varepsilon (s,\xi)-z(s,\xi))^2 \d s \d \xi \substack{\longrightarrow\\ \varepsilon \to 0} 0,
\end{equation}
\begin{equation}\label{e5.8}
\int_{[0,T]\times \R} ((az) \star \phi_\varepsilon -az)^2 (s,\xi) \substack{\longrightarrow\\ \varepsilon \to 0} 0,
\end{equation}
\begin{equation}\label{e5.8bis}
\int_{[0,T]\times \R} ((z(s,\cdot) e_i) \star \phi_\varepsilon - z(s,\cdot) e_i)^2 (\xi) \substack{\longrightarrow\\ \varepsilon \to 0} 0,
\end{equation}
for every $ 1 \le i \le N$,
because $z,az, e_i z \in L^2([0,T]\times \R), 1 \le i \le N$.
Using \eqref{e5.7} and \eqref{e5.8bis}, it is not difficult to show that $(\o \notin N_0)$
\begin{equation}\label{e5.9}
\sum^N_{i=0} \int^T_0 \(\< z_\varepsilon (s,\;\cdot\;), (e^i z (s,\;\cdot\;) \star \phi_\varepsilon \> -\< z(s,\;\cdot\;), e^i z(s,\;\cdot\;)\>\)^2 \d s
\end{equation}
converge to zero.
As a consequence of  \eqref{e5.8bis}, 
 $(\o \notin N_0)$, 
\begin{equation}\label{e5.10}
\sum^N_{i=0} \int^T_0 \( \norm{z(s,\;\cdot\;) e^i ) \star \phi_\varepsilon}^2_{H^{-1}}-\norm{z(s,\;\cdot\;)e^i}^2_{H^{-1}}\)\d s
\end{equation}
converges to zero.
Taking into account \eqref{e5.7}, \eqref{e5.8}, \eqref{e5.9} and 
\eqref{e5.10} we obtain $(\o \notin N_0)$, that 
$\lim\limits_{\varepsilon \to 0} \wt g_\varepsilon (t) = \wt g (t),\; t \in [0,T]$,
 where 
\begin{align}\label{e5.11}
\wt g (t) = & -2 \int^t_0 \<z(s,\;\cdot\;), a(s,\;\cdot\;) z(s,\;\cdot\;)\>_{L^2} \d s\\
& + 2 \int^t_0 \d s \<z(s,\;\cdot\;), (I-\Delta)^{-1} (a(s,\;\cdot\;) z(s,\;\cdot\;))\>_{L^2}\\
& + 2 \int^t_0 \d s \<z(s,\;\cdot\;),z(s,\;\cdot\;)e^0\>_{H^{-1}}\\
& + \sum^N_{i=1} \int^t_0 \<z(s,\;\cdot\;)e^i, z(s,\;\cdot\;) e^i\>_{H^{-1}} \d s.
\end{align}
The convergence of the second term in the right-hand side of \eqref{e5.5} 
to the second term of the right hand sides of \eqref{e5.11} works again 
using \eqref{e5.7} and \eqref{e5.8} cutting the difference in two pieces and using Cauchy-Schwarz. On the other hand the convergence of \eqref{e5.9} 
to zero implies that 
$M^\varepsilon \to M$  ucp, so that the ucp limit of $\wt g_\varepsilon (t)
+M^\varepsilon_t$ gives $\wt g(t)+M_t$. So after a possible modification of the
 $P$-null set $N_0$, setting $g(t) := \norm{z(t,\;\cdot\;)}^2_{H^{-1}}$,
 for $\o\notin N_0$, we have
\begin{align}\label{e5.12}
g(t)& +2 \int^t_0 \< z(s,\;\cdot\;), a(s,\;\cdot\;)z(s,\;\cdot\;)\>_{L^2} \d s\\
&= 2 \int^t_0 \d s \< (I-\Delta)^{-1} z(s,\;\cdot\;), a(s,\;\cdot\;)z(s,\;\cdot\;)\>_{L^2}\\
&+ 2 \int^t_0 \d s \< z(s,\;\cdot\;), e^0 z(s,\;\cdot\;)\>_{H^{-1}}\\
&+ \sum^N_{i=1} \int^t_0 \< z(s,\;\cdot\;) e^i, z(s,\;\cdot\;) e^i \>_{H^{-1}} \d s\\
&+ M_t.
\end{align}
By the inequality
\begin{equation*}
2bc \leq \frac{b^2}{\norm{a}_\9} + c^2 \norm{a}_\9,
\end{equation*}
$b,c\in\R$, it follows,
\begin{align*}
2 \int^t_0 & < (I-\Delta)^{-1} z(s,\;\cdot\;),(az)(s,\;\cdot\;) >_{L^2} \d s\\
& \leq \norm{a}_\9 \int^t_0 \norm{(I-\Delta)^{-1} z(s,\;\cdot\;)}^2_{L^2} \d s\\
& + \frac{1}{\norm{a}_\9} \int^t_0 < (az)(s,\;\cdot\;),(az)(s,\;\cdot\;)>_{L^2} \d s\\
& \leq \norm{a}_\9 \int^t_0 \norm{z(s,\;\cdot\;)}^2_{H^{-2}} \d s\\
& + \frac{1}{\norm{a}_\9} \norm{a}_\9 \int^t_0 \< z(s,\;\cdot\;), az(s,\;\cdot\;)\>_{L^2} \d s.\\
\end{align*}
Since $\norm{\; \cdot \;}_{H^{-2}} \leq \norm{\; \cdot \;}_{H^{-1}}$, 
\eqref{e5.12} gives now $(\o \notin N_0)$,
\begin{align*}
g(t) & + \int^t_0 \< z(s,\;\cdot\;),(az)(s,\;\cdot\;) \>_{L^2} \d s\\
\leq M_t & + \sum_{i=1}^N \int^t_0 \d s \< z(s,\;\cdot\;) e^i, z(s,\;\cdot\;) e^i\>_{H^{-1}}\\
& + 2 \int^t_0 \d s \< z(s,\;\cdot\;), z(s,\;\cdot\;) e^0\>_{H^{-1}}\\
& + \norm{a}_\9 \int^t_0 \norm{z(s,\;\cdot\;)}^2_{H^{-1}} \d s.
\end{align*}
Since $e^i,\; 0\leq i\leq N$ are $H^{-1}$-multipliers,
 we obtain the existence of a constant $\shc=\shc (e_i,\;1\leq i \leq n,\; \norm{a}_\9)$ such that
 ($\o \notin N_0$)
\begin{align}\label{e5.13}
g(t) & + \int^t_0 \< z(s,\;\cdot\;),(az)(s,\;\cdot\;) \>_{L^2} \d s\\
& \leq M_t + \shc \int^t_0 \norm{z(s,\;\cdot\;)}^2_{H^{-1}} \d s\\
& = M_t + \shc \int^t_0 g(s) \d s, \; \forall \; t\in [0,T].
\end{align}
We proceed now via localization which is possible because 
$\int^T_0 \norm{z(s,\;\cdot\;)}^2 \d s$ and $\sup_{t \in [0,T]} \Vert z(s,\cdot)\Vert_{H^{-2}}$
are $P$ a.s. finite. 
 Let $(\varsigma^\ell)$ be the sequence of stopping times
\begin{equation*}
\varsigma^\ell:= \inf \{ t\in [0,T] | \int^t_0 \d s \norm{z(s,\;\cdot\;)}^2_{L^2} \ge \ell, \Vert z(s, \cdot) \Vert^2_{H^{-2}} \geq \ell \}.
\end{equation*}
If $\{ \; \}=\emptyset$ we convene that $\varsigma^\ell = +\9$. Clearly the stopped processes $M^{\varsigma^\ell}$ are (square integrable) martingales starting at zero. We evaluate \eqref{e5.13} at $t\wedge\varsigma^\ell$.
Taking the expectation we get
\begin{align*}
E(g(t\wedge\varsigma^\ell)) & \leq \underbrace{E(M_{\varsigma^\ell \wedge t})}_{=0} +
\shc E\(\int_0^{t\wedge\varsigma^\ell} g(s) \d s \)\\
& \leq \shc \int_0^t \d s E(g(s\wedge\varsigma^\ell)).
\end{align*}
By Gronwall lemma it follows
\begin{equation*}
 E(g(t\wedge\varsigma_\ell))=0\quad \forall\; \ell \in \N^\star.
\end{equation*}
Since $g$ is a.s. continuous and $\lim_{\ell \to \infty} t\wedge\varsigma_\ell = T$
 a.s., for every $t\in[0,T]$, by Fatou's lemma we get
$$
E(g(t))  = E\(\liminf_{\ell\to \9} g(t\wedge\varsigma_\ell) \) \leq \liminf_{\ell\to \9} E\( g(t\wedge\varsigma_\ell) \)=0.
$$
Finally the result follows.
\end{proof}

\section{The non-degenerate
 case} \label{S6}

\setcounter{equation}{0}

We are now able to discuss the double probabilistic representation 
of a solution to the \eqref{PME} when $\psi$ is non-degenerate 
provided that its solution fulfills some properties.
 We remark that up to now we have not used the first item of Assumption 
\ref{E1.0}. We remind
 that the functions $ e_i. 0 \le i \le N,$ are
$H^{-1}$-multipliers.

\begin{theo}\label{thm6.1}
We suppose the following assumptions.
\begin{enumerate}
\item $x_0$ is a real probability measure. 
\item $\psi$ is non-degenerate.
\item There is only one  random field
 $X:[0,T]\times \R\times \Omega \to \R$ 
solution of \eqref{PME} (see Definition \ref{DSPDE})
such that 
\begin{equation}\label{e6.1}
\int\limits_{[0,T]\times \R} X^2 (s,\xi) \d s \d \xi < \9 \quad \text{a.s.}
\end{equation}
Then there is a unique weak  solution to the (DSNLD)$(\Phi,\mu,x_0)$.
\end{enumerate}
\end{theo}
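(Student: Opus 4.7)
The plan is to chain together the three pillars established in Sections \ref{S4}--\ref{S5}: Proposition \ref{P4.1} (well-posedness of a doubly stochastic SDE with bounded, strictly positive random coefficient), Proposition \ref{P4.2} (the $\mu$-marginal laws of such a solution carry an $L^2$-density), and Theorem \ref{T51} (uniqueness for the linear Fokker--Planck SPDE with bounded coefficient). Hypothesis (3) on the unique $L^2$-solution $X$ of \eqref{PME} both drives the construction and closes the argument.

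\textbf{Existence.} Let $X$ be the unique $L^2$-solution of \eqref{PME} and set $\chi(t,\xi,\o) := \Phi(X(t,\xi,\o))$. By Assumption \ref{E1.0}, $\chi$ is bounded above; by non-degeneracy of $\psi$, $\chi$ is bounded below away from $0$ a.s.\ (modulo the truncation discussed below). Proposition \ref{P4.1} then produces a weak-strong solution $Y$ of (DSDE)$(\chi,x_0)$ on a suitable enlarged space $(\Omega_0,\shg,Q)$, and Proposition \ref{P4.2} supplies a progressively measurable density $q\in L^2([0,T]\times\R)$ a.s.\ for the family of $\mu$-marginal laws of $Y$. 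To identify $q$ with $X$, one applies It\^o's formula to $\varphi(Y_t) Z_t$, with $Z_t = \she_t(\int_0^\cdot \mu(\d s, Y_s))$ and $\varphi \in \shs(\R)$ of compact support, and then takes $Q^\o$-expectation exactly as in the proof of Theorem \ref{T32}, closing the stochastic-integral step by (the obvious analogue of) Lemma \ref{Lmulaw}. This shows that $q$ is a weak solution of the linear Fokker--Planck SPDE
\begin{equation*}
\partial_t q = \tfrac12\,\partial_{\xi\xi}^2(\chi^2 q) + q\,\partial_t \mu, \qquad q(0,\cdot) = x_0.
\end{equation*}
Since $\psi(X) = \chi^2 X$, the random field $X$ itself is a weak solution of the very same linear SPDE with the same initial datum; the coefficient $\chi^2$ is bounded and $q, X \in L^2([0,T]\times\R)$ a.s., so Theorem \ref{T51} applied to $z := q - X$ forces $q \equiv X$. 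Hence $(Y, X)$ is a weak solution of (DSNLD)$(\Phi, \mu, x_0)$.

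\textbf{Uniqueness.} Let $(Y^{(i)}, X^{(i)})$, $i = 1, 2$, be two weak solutions of (DSNLD)$(\Phi, \mu, x_0)$. Theorem \ref{T32} shows that each $X^{(i)}$ solves \eqref{PME}, Proposition \ref{P4.2} places each $X^{(i)}$ in $L^2([0,T]\times\R)$ a.s., and hypothesis (3) then forces $X^{(1)} = X^{(2)} =: X$. Both $Y^{(i)}$ are therefore weak-strong solutions of the common (DSDE)$(\Phi(X), x_0)$, and the weak-strong uniqueness clause of Proposition \ref{P4.1} yields that $(Y^{(i)}, W^1, \ldots, W^N)$, $i = 1, 2$, share the same law, which is exactly the notion of weak uniqueness in Definition \ref{DDoubleStoch}.

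\textbf{Main obstacle.} The delicate technical point is securing the $\o$-a.s.\ positive lower bound on $\chi = \Phi(X)$ required by Propositions \ref{P4.1} and \ref{P4.2}: non-degeneracy only supplies $\Phi \ge c_0 > 0$ on each compact of $\R_+$ together with $\liminf_{u \to 0+} \Phi(u) > 0$, while the mere $L^2$-hypothesis on $X$ does not prevent it from being unbounded in $(t,\xi)$. A natural remedy is to run the whole scheme with the truncated coefficient $\chi_\kappa := \chi \vee \sqrt{\kappa}$, obtain $Y_\kappa$ and the corresponding density $q_\kappa$, and pass to the limit $\kappa \downarrow 0$; the final identification of $q$ with $X$ can still be routed through Theorem \ref{T51}, whose statement requires only boundedness (not positivity) of the Fokker--Planck coefficient.
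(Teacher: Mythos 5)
Your existence and uniqueness arguments are, step for step, the paper's own proof. For existence you set $\gamma=\Phi(X)$, invoke Proposition~\ref{P4.1} for a weak-strong solution $Y$ of (DSDE)$(\gamma,x_0)$, use Proposition~\ref{P4.2} for the $L^2$ density $q$ of its $\mu$-marginal laws, show via It\^o's formula (as in the proof of Theorem~\ref{T32}) that both $q$ and $X$ solve the linear Fokker--Planck SPDE \eqref{e5.1} with coefficient $a=\Phi^2(X)$, and conclude $q\equiv X$ by Theorem~\ref{T51}. For uniqueness you reduce, via Theorem~\ref{T32}, Proposition~\ref{P4.2}, and hypothesis~(3), to a single common coefficient $\gamma=\Phi(X)$ and invoke weak-strong uniqueness from Proposition~\ref{P4.1}. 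This is exactly the paper's argument.

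Your \emph{main obstacle} paragraph, however, is where you depart, and it deserves comment. You are right that Proposition~\ref{P4.1} (and likewise Proposition~\ref{P4.2}) demands $0<A_1(\o)\le\gamma\le A_2(\o)$, that the upper bound is free from Assumption~\ref{E1.0}, and that the phrasing of Definition~\ref{DNond} and the equivalent characterization $\liminf_{u\to 0+}\Phi(u)>0$ in Remark~\ref{DegNonD} only control $\Phi$ near $0$ and on compacts, so that $\Phi^2(u)=\psi(u)/u$ may tend to $0$ as $u\to\infty$ and the $L^2$ hypothesis on $X$ alone does not preclude this. The paper's own proof silently sets $\gamma=\Phi(X)$ and calls Proposition~\ref{P4.1} without addressing this; so your observation flags a real implicit assumption (namely that $\Phi(X)$ is a.s.\ bounded below, e.g.\ because $\Phi$ is globally bounded below or because the $L^2$ solution $X$ of \cite{BRR3} is actually essentially bounded). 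Where your proposal does go wrong is the truncation remedy: if you run the scheme with $\chi_\kappa=\chi\vee\sqrt{\kappa}$, the resulting density $q_\kappa$ solves the Fokker--Planck SPDE with diffusion coefficient $\chi_\kappa^2$, whereas $X$ solves it with coefficient $\chi^2=\Phi^2(X)$; these are different linear equations, so Theorem~\ref{T51} cannot be applied to $z=q_\kappa-X$. To salvage the idea you would have to first establish convergence of $(Y_\kappa,q_\kappa)$ as $\kappa\downarrow 0$ --- tightness, stable convergence, passage to the limit in the martingale problem --- which is precisely the degenerate-case machinery of Section~\ref{S7}, not a one-line patch. The paper avoids all of this in the non-degenerate case by never truncating, relying instead on the a.s.\ lower bound for $\Phi(X)$.
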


\begin{rem} \label{R6.2}  
\begin{enumerate}  
\item 
Suppose that $e^i, 1 \le i \le d$, belong to $W^{1,\infty}$.
 In Theorem 3.4 of \cite{BRR3}, we show that
(even if $x_0$ belongs to $ H^{-1}(\R)$)), when $\psi$  is Lipschitz, there is a solution to \eqref{PME} such that
\begin{equation*}
E\( \int\limits_{[0,T]\times \R} X^2 (s,\xi) \d s \d \xi\) < \9.
\end{equation*}
According to Thereom \ref{TB1}, that solution is unique. \\
In particular item 3. in Theorem \ref{thm6.1} statement holds.
\item
 Theorem \ref{thm6.1} constitutes the converse of Theorem \ref{T32}
 when $\psi$ is non-degenerate.
\item
 Again for simplicity of the formulation, without restriction of 
generality, in the proof we will suppose $\psi$ to be single-valued and $\Phi$ admitting a continuous extension to $\R$. Otherwise one can adopt the techniques of \cite{BRR1}.
\item
 As side-effect of the proof of the weak-strong existence Proposition 
\ref{P4.1}, the space $(\O_0,\shg,Q)$ can be chosen as 
$\O_0=\O_1\times \O,\; \O_1= C([0,T];\; \R) \times \R, \; \shg =\shb(\O_1)\times 
\shf,\; Q(H\times F)=\int\limits_{\O_1\times\O} \d P(\o) 
1_F (\o)Q(\d \o_1,\o)$.
\end{enumerate}
\end{rem}

\begin{proof}
\begin{enumerate}
[1)]
\item We set $\gamma(t,\xi,\o)=\Phi \(X(t,\xi,\o)\right)$. According to
 Proposition \ref{P4.1} there is a weak-strong solution $Y$ to (DSDE)$(\gamma,x_0)$. By Proposition \ref{P4.2} $\o$ a.s. the $\mu$-marginal laws of $Y$ admit  densities $\(q_t (\xi,\o),t\in]0,T],\; \xi \in\R,\; \o \in \O\right)$ such that $\pas$
\begin{equation}\label{e6.2}
\int\limits_{[0,T]\times \R} \d s \d \xi q^2_s (\xi,\;\cdot\;) < \9 \quad \text{a.s.}
\end{equation}
\item Setting 
\begin{equation*}
\nu_t (\xi,\o)= \left(
\begin{array}{ccc}
q_t(\xi,\o) \d \xi &:& t\in ]0,T],\\
x_0 &:& t=0,
\end{array}
\right.
\end{equation*}
$\nu$ is a solution to \eqref{e5.1} with $\nu_0=x_0,\; a(t,\xi,\o)=
\Phi^2(X(t,\xi,\o))$. This can be shown applying It\^o's formula similarly as in the proof of Theorem \ref{T32}.
\item On the other hand $X$ is obviously also a solution of \eqref{e5.1},
which  in particular verifies \eqref{e6.1}.
Consequently $z^1=\nu, \; z^2=X$ verify items i), ii), iii) of
 Theorem \ref{T51}.
So Theorem \ref{T51} implies that $\nu \equiv X$; this shows that $Y$ provides a solution to  (DSNLD)$(\Phi,\mu,x_0)$.
\item Concerning uniqueness, let $Y^1,Y^2$ be two 
solutions to the (DSNLD) related to $(\Phi,\mu,x)$. The 
corresponding random fields $X^1,X^2$ constitute the $\mu$-marginal laws of $Y^1,Y^2$ respectively.
\end{enumerate}
Now $Y^i,\; i=1,2$, is a weak-strong solution of (DSDE)$(\gamma_i,x)$ with
 $\gamma_i(t,\xi,\o)=\Phi(X_i(t,\xi,\o))$, so by Proposition 
\ref{P4.2}
 $X_i,\; i=1,2$ fulfills 
\eqref{e6.1}. By Theorem \ref{T32}, $X_1$ and $X_2$ are solutions to
\eqref{PME}.
 By assumption 3. of the statement,
 $X_1=X_2$. The conclusion follows by Proposition \ref{P4.1}, which  
guarantees the uniqueness of the weak-strong solution of (DSDE)$(\gamma,x_0)$
 with $\gamma_1=\gamma_2$.
\end{proof}

\begin{rem} \label{R6.3}
One side-effect of Theorem \ref{thm6.1} is the following. Suppose $\psi$
 to be non-degenerate. Let $X:[0,T]\times \R\times\O\to\R$ be a solution such that $\pas$
\begin{equation*}
\int\limits_{[0,T]\times \R} X^2 (s,\xi) \d s \d \xi <\9 \quad \text{a.s.}
\end{equation*}
We have the following for $\o \pas$.
\begin{enumerate}[i)]
\item $X(t,\; \cdot\;,\o)\geq 0$ a.e. $\forall\; t\in [0,T]$
\item $E\(\int\limits_\R X(t,\xi)\d \xi\right)=1,\; \forall \; t\in [0,T]$ if $e_0=0$.
\end{enumerate}
\end{rem}
\begin{rem}\label{R6.4}
If \eqref{PME} has a solution, not necessarily unique then (DSNLD) 
with respect to $(\Phi,\mu,x_0)$ still admits existence.
\end{rem}

\section{The degenerate case}
\label{S7}

\setcounter{equation}{0}

The idea consists in proceeding similarly to \cite{BRR2}, which treated
 the case $\mu=0$ and the case when $x_0$ is absolutely continuous
with bounded density.
$\psi$ will be assumed to be strictly increasing after some zero $u_c \ge 0$,
see Definition \ref{DNond}.
We recall that if $\psi$ is degenerate, then necessarily
$\Phi(0):=  \lim_{\varepsilon \rightarrow 0} \Phi(x) = 0$.
\begin{rem} \label{R7.2}
\begin{enumerate}[i)]
\item If $u_c>0$ then $\psi$ is necessarily degenerate
and also $\Phi$ restricted to $[0,u_c]$ vanishes. 
\item Let 
 $x_0$ is a probability on $\R$. Suppose the existence of a 
solution $X$ to \eqref{PME} such that
\begin{equation}\label{e7.1}
E\(\int_{[0,T]\times\R} \d t\d \xi X^2 (t,\xi)\)< \9.
\end{equation}
We recall that, by Definition \ref{DSPDE}, a.s. it belongs to $C\([0,T],\shs'(\R)\)$.
 In this case a.s. $\int\limits^t_0 \psi (X(s,\;\cdot\;))\d s\in H^1 (\R)$
 for every $t\in [0,T]$. See Remark \ref{remB2} vi). 
\item If $X$ is a solution such that \eqref{e7.1} is verified and $x_0 \in H^{-2}$,
then $X \in C([0,T]; H^{-2})$ a.s., see Remark \ref{remB2} vi).
\item \eqref{e7.1} implies in particular that if $X$ is a solution of \eqref{PME}, then
\begin{equation}\label{e7.2}
E\(\int_0^T \d s \norm{X(s,\;\cdot\;)}^2_{H^{-1}}\)<\9.
\end{equation}
\item If $\psi$ is Lipschitz, we remind (Remark \ref{R6.2} 1.) and
 $x_0 \in L^2$,
 there is a unique solution to \eqref{PME} 
such that \eqref{e7.1} is fulfilled, at least if we suppose that
all the $e^i$ belong to $H^1(\R)$, see Theorem 3.4 of
 \cite{BRR3} and Theorem \ref{TB1}.
\end{enumerate}
\end{rem}
\begin{theo} \label{T73}
We suppose the following.
\begin{enumerate}
\item The functions $ e_i. 1 \le i \le N$ 
 belong to $H^1(\R)$. 
\item We suppose that $\psi:\R\to\R$ is non-decreasing, Lipschitz and 
strictly increasing after some zero.
\item Let $x_0$ belong to $L^2(\R)$.
\end{enumerate}
 Then there is a weak solution to the (DSNLD)$(\Phi, \mu, x_0)$.
\end{theo}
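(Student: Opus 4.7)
The strategy is to reduce to Theorem \ref{thm6.1} by a non-degenerate regularization of $\psi$, followed by a compactness argument. For $\kappa \in\, ]0,1]$, set $\Phi^\kappa(u) := \sqrt{\Phi^2(u) + \kappa}$, so that $\psi^\kappa(u) := (\Phi^\kappa)^2(u)\, u = \psi(u) + \kappa u$ is non-degenerate and Lipschitz with constant independent of $\kappa$. Since $x_0 \in L^2(\R)$, Remark \ref{R6.2} item 1 combined with Theorem 3.4 of \cite{BRR3} gives a unique $L^2$-solution $X^\kappa$ of \eqref{PME} with $\psi$ replaced by $\psi^\kappa$, and then Theorem \ref{thm6.1} supplies a weak solution $(Y^\kappa, X^\kappa)$ of (DSNLD)$(\Phi^\kappa,\mu,x_0)$ on the enlarged probability space $(\Omega_0,\shg,Q^\kappa)$ of Remark \ref{R6.2} item 4, whose $\mu$-marginal laws have densities $X^\kappa(t,\xi,\omega)$.

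The first analytic step is to show $X^\kappa \to X$ in $L^2(\Omega \times [0,T] \times \R)$, where $X$ is the unique $L^2$-solution of \eqref{PME} ensured by Remark \ref{R7.2} item v). This is done by applying the $H^{-1}$-energy estimate behind Theorem \ref{T51} to $z = X^\kappa - X$: the additional term $\kappa X^\kappa$ coming from $\psi^\kappa - \psi$ contributes a vanishing $O(\kappa \norm{X^\kappa}_{L^2}^2)$, and Gronwall concludes. Along a subsequence $\kappa_n \downarrow 0$ we may then assume that $X^{\kappa_n} \to X$ both in $L^2$ and $\d t\, \d \xi\, \d P$-almost everywhere. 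The second step is tightness: under $Q^{\kappa_n}$, $Y^{\kappa_n}$ is a continuous martingale whose quadratic variation is bounded by $T(\norm{\Phi}_\infty^2 + 1)$ uniformly in $n$, so, for $P$-a.e.\ $\omega$, Kolmogorov's criterion gives that the kernels $Q^{\kappa_n}(\cdot,\omega)$ form a tight family on $\Omega_1 = C([0,T];\R) \times \R$; extracting a limit along the procedure of Lemma \ref{Lconv} (based on Proposition \ref{A4} of the Appendix) yields a random kernel $Q(\cdot,\omega)$ with limit process $Y$ and limit Brownian motion $B$ on $(\Omega_0, Q)$.

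The main obstacle is identifying the limit as a weak-strong solution of the degenerate SDE with coefficient in $\Phi(X)$, and matching the $\mu$-marginal laws. Here the hypothesis that $\psi$ is strictly increasing after some zero $u_c$ is crucial: $\Phi$ is continuous on $\R_+$, identically zero on $[0,u_c]$ and strictly positive on $]u_c,\infty[$, so $\Phi^\kappa \to \Phi$ uniformly on compacts. Combining this uniform convergence with the $\d t\, \d \xi\, \d P$-a.e.\ convergence of $X^{\kappa_n}$ to $X$, one passes to the limit in the $Q^{\kappa_n,\omega}$-martingale problem
\[
t \mapsto \varphi(Y^{\kappa_n}_t) - \varphi(Y_0) - \tfrac{1}{2} \int_0^t (\Phi^{\kappa_n})^2 \bigl(X^{\kappa_n}(s,Y^{\kappa_n}_s)\bigr)\, \varphi''(Y^{\kappa_n}_s)\, \d s,
\]
for $\varphi \in C_c^\infty(\R)$, obtaining that under $Q^\omega$ the limit $Y$ solves the martingale problem with coefficient $\Phi^2(X(\cdot,Y))$, equivalently $\d Y_t = \Phi(X(t,Y_t))\, \d B_t$. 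Finally, by Proposition \ref{P28} item iii) the Dol\'eans exponentials $\she_t(\int_0^\cdot \mu(\d s, Y^{\kappa_n}_s))$ are uniformly $L^2$-bounded, so the $\mu$-marginal laws of $Y^{\kappa_n}$, namely $X^{\kappa_n}(t,\xi)\, \d \xi$, pass to the limit and, by the first step, equal $X(t,\xi)\, \d \xi$. This shows that $(Y, X)$ is a weak solution of (DSNLD)$(\Phi, \mu, x_0)$, and concludes the proof.
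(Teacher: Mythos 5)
Your overall outline—regularize $\psi$ to $\psi^\kappa(u)=\psi(u)+\kappa u$, invoke Theorem~\ref{thm6.1} on the non-degenerate problem, prove tightness via Kolmogorov's criterion, and pass to the limit in the quenched martingale problem and in the $\mu$-marginal laws—is indeed the strategy of the paper. But there is a critical gap in the first analytic step that propagates through the rest.

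You assert that $X^\kappa\to X$ in $L^2(\O\times[0,T]\times\R)$, and hence (along a subsequence) $\d t\,\d\xi\,\d P$-a.e. This is false, precisely because $\psi$ is degenerate. The $H^{-1}$ energy estimate (paper's Lemma~\ref{L7.4}) yields only three conclusions: (a) $\sup_t E\norm{X^\kappa(t)-X(t)}^2_{H^{-1}}\to 0$, (b) $E\int_0^T\norm{\psi(X^\kappa)-\psi(X)}^2_{L^2}\to 0$, and (c) $\kappa\,E\int|X^\kappa-X|^2\to 0$. Item (c) is \emph{not} $L^2$ convergence: the coercive term $\kappa\int\norm{X^\kappa-X}^2_{L^2}$ appears on the left of \eqref{e7.13} with a factor $\kappa$, so dividing gives a bounded rather than a vanishing $L^2$ norm. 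On the set where $X$ and $X^\kappa$ both lie in $[0,u_c]$ the cross term $\langle X^\kappa-X,\psi(X^\kappa)-\psi(X)\rangle_{L^2}$ gives no control whatsoever, and $L^2$ convergence of $X^\kappa$ genuinely fails there. What \emph{does} hold, and what the paper uses, is the weaker conclusion (Remark~\ref{R7.5}, items~2 and~4): $(X^{\kappa_n}-X)\,1_{\{X>u_c\}}\to 0$ and hence $\Phi^2(X)(X^{\kappa_n}-X)\to 0$ a.e., which is weighted by $\Phi^2(X)$ and vanishes on the degenerate region.

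This changes the identification step materially. Since you cannot invoke a.e.\ convergence of $X^{\kappa_n}$, your passage to the limit in the martingale problem via ``uniform convergence of $\Phi^\kappa$ plus a.e.\ convergence of $X^{\kappa_n}$'' does not go through. The paper instead proves Lemma~\ref{L7.7}: for a.e.\ $\omega$, $\int_0^T\bigl(\Phi_{\kappa_n}(X^{\kappa_n}(r,Y^n_r,\omega))-\Phi(X(r,Y^\infty_r,\omega))\bigr)^2\d r\to 0$ in $L^p(Q^\omega)$. The proof of that lemma is delicate: it rewrites the expectation against the $\mu$-marginal law $X^{\kappa_n}(r,y)\d y$, splits into four terms $I_{11},\dots,I_{14}$, and controls them using exactly the conclusions (b), (c) of Lemma~\ref{L7.4} plus the weighted a.e.\ convergence \eqref{e7.8}. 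It also requires Lemma~\ref{L7.6} (continuity of $\xi\mapsto\Phi(X(t,\xi,\omega))$, deduced from $\psi(X(t,\cdot))\in H^1$), which you do not establish. Finally, your last step—``uniform $L^2$ bound on the Dol\'eans exponentials implies the $\mu$-marginal laws pass to the limit''—elides the real work: the paper applies Proposition~\ref{A3} (Skorohod + uniform integrability), which requires showing convergence in law of the exponent via a decomposition $J_1(n)+J_2(n)$ (with a stable-convergence result from \cite{pages} for the It\^o integral $J_2$) and a separate uniform integrability estimate \eqref{e7.24}. Without the correct convergence mode for $X^{\kappa_n}$, none of this can be shortcut to ``the marginal laws pass to the limit by $L^2$ boundedness.''

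So: right scaffolding, but the $L^2$ (hence a.e.) convergence claim for $X^{\kappa_n}$ is false, and the identification of the limit martingale problem and of the limit $\mu$-marginal laws requires the substitutes (Lemmas~\ref{L7.4}~b)--c), \ref{L7.6}, \ref{L7.7}, and Proposition~\ref{A3}) that you omit.
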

\begin{proof}[Proof]
\begin{enumerate}[1)]
\item We proceed by approximation rendering $\Phi$ non-degenerate. Let $\kappa>0$. We define $\Phi_\kappa:\R\to\R_+$ by
\begin{align*}
\Phi_\kappa (u) = \sqrt{\Phi^2(u)+\kappa}\\
\psi_\kappa (u) = \Phi_\kappa^2(u)\cdot u
\end{align*}
Let $X^\kappa$ be the solution so \eqref{PME} with $\psi_\kappa$ instead of $\psi$.
According to Theorem \ref{thm6.1} and Remark \ref{R6.2} 4.,
 setting $\O_1=C\([0,T],\R\)\times \R, \;\shh$ its Borel $\sigma$-algebra, $Y(\o^1,\o)=\o^1$, there are families of probability kernels $Q^\kappa$ on $\shh\times\O_1$, 
and processes $B^\kappa$ on $\O_0$ such that
\begin{enumerate}[i)]
\item $B^\kappa (\;\cdot\;,\o)$ is a
 $Q^\kappa (\;\cdot\;,\o)$-Brownian motion;
\item 
\begin{equation}\label{e7.3}
Y_t=Y_0 +\int\limits_0^t \Phi_\kappa (X^\kappa(s,Y_s,\o))\d B_s^\kappa; 
\end{equation}
\item $Y_0^\kappa$ is distributed according to $x_0=X^\kappa(0,\;\cdot\;)$.
\item The $\mu$-marginal laws of $Y$ under $Q^\kappa$ are $(X^\kappa(t,\;\cdot\;))$.
\end{enumerate}
We need to show the existence of a probability kernel $Q$ on
 $\shh\times\O_1$, a process $B$ on $\O_0$ such that 
the following holds.
\begin{enumerate}[i)]
\item $B (\;\cdot\;,\o)$ is a $Q (\;\cdot\;,\o)$-Brownian motion.
\item 
\begin{equation}\label{e7.4}
Y_t=Y_0+\int\limits_0^t \Phi (X(s,Y_s,\o))\d B_s^\kappa.
\end{equation}
\item $Y_0$ is distributed according to $x_0$.
\item For every $t\in]0,T],\; \varphi \in C_b(\R)$,
\begin{equation}\label{e7.5}
\int_\R X(t,\xi) \varphi(\xi)\d \xi = E^{Q^\o} \left(\varphi(Y_t)
\she_t\(\int\limits_0^\cdot \mu (\d s, Y_s) X(s,Y_s)\)\right) ,
\end{equation}
where $Q^\o = Q(\;\cdot\;,\o)$.
\end{enumerate}
\item We need to show that $X^\kappa$ approaches $X$ in some sense
 when $\kappa \to 0$, where $X$ is
 the solution to \eqref{PME}. This is given in the Lemma \ref{L7.4} below.
\end{enumerate}

\begin{lemma} \label{L7.4}
Under the the assumptions of Theorem \ref{T73}, let $X$ (resp. $X^\kappa$) be a solution of \eqref{PME}
verifying \eqref{e7.1} with $\psi(u) = u \Phi^2(u)$ (resp. $\psi_\kappa(u)
 = u(\Phi^2(u) + \kappa)$), for $ u > 0$.
We have the following.
\begin{enumerate}[a)]
\item
$
\lim_{\kappa \to 0} \sup_{t\in [0,T]} E\(\norm{X^\kappa (t,\;\cdot\;)-X(t,\;\cdot\;)}^2_{H^{-1}}\)=0;
$
\item
$\lim_{\kappa \to 0}E\(\int^T_0 \d t \norm{\psi\(X^\kappa (t,\;\cdot\;)\)-\psi\(X(t,\;\cdot\;)\)}^2_{L^2}\)=0;$
\item  $\lim_{\kappa \to 0} \kappa  E\(\int_{[0,T]\times \R}
 \d t \d \xi \(X^\kappa (t,\xi) - X(t,\xi)\)^2 \) = 0. $
\end{enumerate}
\end{lemma}
\begin{rem} \label{R7.5}
\begin{enumerate}[1)]
\item a) implies of course
\begin{equation*}
\lim_{\kappa \to 0}E\(\int^T_0 \d t \norm{X^\kappa (t,\;\cdot\;)-X(t,\;\cdot\;)}^2_{H^{-1}}\)=0.
\end{equation*}
\item In particular Lemma \ref{L7.4} b) implies that for each sequence $(\kappa_n)\to 0$ there is a subsequence still denoted by the same notation that
\begin{equation*}
\int\limits_{[0,T]\times\R} \(\psi (X^{\kappa_n} (t,\xi))-\psi(X(t,\xi))\)^2 \d t \d \xi \substack{\longrightarrow\\n\to\9}0 
\end{equation*}
a.s.
\item 
For every $t\in[0,T]$
\begin{equation*}
X(t,\;\cdot\;) \geq 0 \quad  d\xi \otimes dP \text{a.e.}
\end{equation*}
Indeed, for this it will be enough to show that a.s.
\begin{equation}\label{e7.6}
\int\limits_\R \d \xi \varphi(\xi)X(t,\xi) \ge 0 \text{ for every }
 \varphi \in \shs(\R),
\end{equation}
for every $t \in [0,T]$.
Since $X\in C\([0,T];\;\shs'(\R)\)$ it will be enough to show \eqref{e7.6} for almost all $t\in[0,T]$. This follows since item 1) in this Remark
\ref{R7.5}, implies the existence of a sequence $(\kappa_n)$ such that
\begin{equation*}
\int\limits^T_0 \d t\norm{X^{\kappa_n}(t,\cdot) - X(t,\;\cdot\;)}^2_{H^{-1}}
\substack{\longrightarrow\\n\to\9}0, \quad {\rm a.s.} 
\end{equation*}
\item 
Since $\psi$ is strictly increasing after $u_c$, for $P$ almost all $\o$, for almost all $(t,\xi)\in [0,T]\times\R$,
 there is a sequence $(\kappa_n)$ such that
\begin{equation}\label{e7.7}
\(X^{\kappa_n}(t,\xi)-X(t,\xi)\) 1_{\{X(t,\xi)>u_c\}}(t,\xi)
\substack{\longrightarrow\\n\to\9}0.
\end{equation}
This follows from item 2) of Remark \ref{R7.5}.\\
Since $\Phi^2(u)=0$ for $0\leq u\leq u_c$ and $X$ is a.e. non-negative, 
this implies that $dt d\xi dP$ a.e. we have
\begin{equation}\label{e7.8}
\Phi^2\(X(t,\xi)\)\(X^{\kappa_n}(t,\xi)-X(t,\xi)\)\substack{\longrightarrow\\n\to\9}0.
\end{equation}
\end{enumerate}
\end{rem}
\begin{proof}[Proof (of Lemma \ref{L7.4})]
Proceeding similarly as
 in Theorem \ref{T51}, we can write $\pas$ the following  $H^{-2}(\R)$-valued
 equality.
\begin{align*}
\(X^\kappa -X\)(t,\;\cdot\;)= &\int\limits^t_0 \d s\( \psi_\kappa \(X^\kappa(s,\;\cdot\;)\)-\psi \(X(s,\;\cdot\;)\)\)''\\
& + \sum^N_{i=0} \int\limits^t_0  
\(X^\kappa(s,\;\cdot\;)-X(s,\;\cdot\;)\) e^i \d W_s^i.
\end{align*}
So
\begin{align*}
(I-\Delta)^{-1} \(X^\kappa -X\)(t,\;\cdot\;)= & -\int\limits^t_0 \d s\( \psi_\kappa \(X^\kappa(s,\;\cdot\;)\)-\psi \(X(s,\;\cdot\;)\)\)\\
& + \int\limits^t_0 \d s (I-\Delta)^{-1} \( \psi_\kappa \(X^\kappa(s,\;\cdot\;)\)-\psi \(X(s,\;\cdot\;)\)\)\\
& + \sum^N_{i=0} \int\limits^t_0 (I-\Delta)^{-1} \( e^i \(X^\kappa(s,\;\cdot\;)-X(s,\;\cdot\;)\)\)\d W_s^i. 
\end{align*}
After regularization and application of It\^o calculus with values in $H^{-1}$, setting $g^\kappa(t)=\norm{\(X^\kappa - X\)(t,\;\cdot\;)}^2_{H^{-1}}$,
similarly to the proof of Theorem \ref{T51}, we obtain
\begin{align}\label{e7.9}
g^\kappa(t) = & \sum^N_{i=1} \int\limits^t_0 \norm{e^i \(X^\kappa -X\)(s,\;\cdot\;)}^2_{H^{-1}} \d s\\
- & 2 \int\limits^t_0 \<\(X^\kappa -X\)(s,\;\cdot\;), \psi_\kappa \(X^\kappa(s,\;\cdot\;)\)-\psi \(X(s,\;\cdot\;)\)\>_{L^2}\\
+ & 2 \int\limits^t_0 \d s \< \(X^\kappa -X\)(s,\;\cdot\;),(I-\Delta)^{-1}
\left(\psi_\kappa \(X^\kappa(s,\;\cdot\;)\)-\psi \(X(s,\;\cdot\;)\) \right)
\>_{L^2}\\
+ & 2 \int\limits^t_0 \d s \< \(X^\kappa -X\)(s,\;\cdot\;),(I-\Delta)^{-1} e^0 \(X^\kappa -X\)(s,\;\cdot\;)\>_{L^2}\\
& + M^\kappa_t,
\end{align}
where $M^\kappa$ is the local martingale
\begin{equation*}
M^\kappa_t = 2 \sum^N_{i=1} \int\limits^t_0 \< (I-\Delta)^{-1} \(X^\kappa -X\)(s,\;\cdot\;),\(X^\kappa -X\)(s,\;\cdot\;) e^i\>_{L^2} \d W^i_s.
\end{equation*}
Indeed, $M^\kappa$ is a local martingale because, taking into account
\eqref{eFB1} and Remark \ref{R7.2} iii), acting similarly as for the proof 
of \eqref{e5.6bis}, see also \eqref{EFB100} in  Appendix B,
we can prove that
$$  \sum^N_{i=1} \int\limits^t_0 \vert \< \(  X^\kappa  -  X \vert\)
(s,\;\cdot\;),(I-\Delta)^{-1}\(X^\kappa -X\)(s,\;\cdot\;)
 e^i\>_{L^2} \vert^2 \d s < \infty.$$
\eqref{e7.9} gives
\begin{align}
g^\kappa(t) & + 2\int\limits^t_0 \<\(X^\kappa-X\)(s,\;\cdot\;),\;\psi \(X^\kappa(s,\;\cdot\;)\)-\psi \(X(s,\;\cdot\;)\)\>_{L^2} \d s\\
& + 2\kappa \int\limits^t_0 \<\(X^\kappa-X\)(s,\;\cdot\;),\;\(X^\kappa-X\)(s,\;\cdot\;)\>_{L^2} \d s\\
\leq & -2\kappa \int\limits^t_0 \d s\<\(X^\kappa-X\)(s,\;\cdot\;),\; X(s,\;\cdot\;)\>_{L^2} \d s\\
& + \sum^N_{i=1}\int\limits^t_0 \norm{e^i \(X^\kappa -X\)(s,\;\cdot\;)}^2_{H^{-1}} \d s\\
& + 2 \int\limits^t_0 \d s\<(I-\Delta)^{-1}\(X^\kappa -X\)(s,\;\cdot\;),\;\psi \(X^\kappa(s,\;\cdot\;)\)-\psi \(X(s,\;\cdot\;)\)\>_{L^2} 
\end{align}
\begin{align}
& + 2\kappa \int\limits^t_0 \d s\<(I-\Delta)^{-1}\(X^\kappa -X\)(s,\;\cdot\;),\;\(X^\kappa-X\)(s,\;\cdot\;)\>_{L^2}\\
& + 2\kappa \int\limits^t_0 \d s\<(I-\Delta)^{-1}\(X^\kappa -X\)(s,\;\cdot\;),\; X(s,\;\cdot\;)\>_{L^2}\\
& + 2 \int\limits^t_0 \d s\< (I-\Delta)^{-1} \(X^\kappa -X\)(s,\;\cdot\;),\;
\(e^0\(X^\kappa -X\)(s,\;\cdot\;)\)\>_{L^2}  + M^\kappa_t .
\end{align}
We use Cauchy-Schwarz and the inequality
\begin{equation*}
2\sqrt{\kappa} b \sqrt{\kappa}c\leq \kappa b^2 + \kappa c^2
\end{equation*}
with first
$$
b  = \norm{X^\kappa(s,\;\cdot\;)-X(s,\;\cdot\;)}_{L^2}, \quad
c  = \norm{X(s,\;\cdot\;)}_{L^2}
$$
and then
$$
b  = \norm{X^\kappa(s, \cdot) -X (s,\cdot)}_{H^{-2}}, \quad
c  = \norm{X(s,\;\cdot\;)}_{L^2}.
$$
We also take into account the property of $H^{-1}$-multiplier 
for $e^i,\; 0\leq i\leq N$. Consequently there is a constant
 ${\mathrm C}(e)$ depending on $(e^i,\; 0\leq i\leq N)$ such that
\begin{align}\label{e7.11}
g^\kappa(t) & + 2\int\limits^t_0 \<\(X^\kappa-X\)(s,\;\cdot\;),\;\psi \(X^\kappa(s,\;\cdot\;)\)-\psi \(X(s,\;\cdot\;)\)\>_{L^2} \d s\\
& + 2\kappa \int\limits^t_0 \norm{X^\kappa(s,\;\cdot\;)-X(s,\;\cdot\;)}^2_{L^2} \d s\\
\leq & \kappa \int\limits^t_0 \norm{\(X^\kappa -X\)(s,\;\cdot\;)}^2_{L^2} \d s\\
& + \kappa \int\limits^t_0 \d s \norm{X(s,\;\cdot\;)}_{L^2}^2\\
& + {\mathrm C}(e)\int\limits^t_0 \d s \norm{X^\kappa(s,\;\cdot\;)-X(s,\;\cdot\;)}^2_{H^{-1}}\\ 
& + 2\int\limits^t_0 \norm{\(X^\kappa -X\)(s,\;\cdot\;)}_{H^{-2}} \norm{\psi \(X^\kappa(s,\;\cdot\;)\)-\psi \(X(s,\;\cdot\;)\)}_{L^2}\\
& + 2\kappa \int\limits^t_0 \d s g^\kappa(s)\\
& + \kappa \int\limits^t_0 \d s \norm{\(X^\kappa -X\) (s,\;\cdot\;)}_{H^{-2}}^2 
+ \kappa \int\limits^t_0 \d s \norm{X(s,\;\cdot\;)}_{L^2}^2\\
& + M^\kappa_t .
\end{align}
Since $\psi$ is Lipschitz, it follows
\begin{equation*}
\(\psi(r)-\psi(r_1)\)(r-r_1) \geq \alpha\(\psi(r)-\psi(r_1)\)^2,
\end{equation*}
for some $\alpha>0$. Consequently, the inequality
\begin{equation*}
2bc \leq b^2 \alpha+\frac{c^2}{\alpha},
\end{equation*}
with $b,c\in\R$ and the fact that $\norm{\;\cdot\;}_{H^{-2}}\leq \norm{\;\cdot\;}_{H^{-1}}$ give
\begin{align}\label{e7.12}
2 & \int\limits^t_0 \d s \norm{\(X^\kappa -X\)(s,\;\cdot\;)}_{H^{-2}} \norm{\psi \(X^\kappa(s,\;\cdot\;)\)-\psi \(X(s,\;\cdot\;)\)}_{L^2}\\
\leq & \int\limits^t_0 \d s \alpha g^\kappa (s,\;\cdot\;)+ \int\limits^t_0
 \d s\<\psi \(X^\kappa(s,\;\cdot\;)\)-\psi \(X(s,\;\cdot\;)\),\;X^\kappa(s,\;\cdot\;)-X(s,\;\cdot\;)\>_{L^2}.
\end{align}
So \eqref{e7.11} yields 
\begin{align}\label{e7.13}
g^\kappa(t) & + \int\limits^t_0 \< X^\kappa(s,\;\cdot\;)-X(s,\;\cdot\;),\;\psi \(X^\kappa(s,\;\cdot\;)\)-\psi \(X(s,\;\cdot\;)\)\>_{L^2} \d s\\
& + \kappa \int\limits^t_0 \d s \norm{X^\kappa(s,\;\cdot\;)-X(s,\;\cdot\;)}^2_{L^2} \d s\\
\leq & 2\kappa \int\limits^t_0 \d s \norm{X(s,\;\cdot\;)}^2_{L^2}\\
& + M^\kappa_t\\
& + \({\mathrm C}(e) +\alpha+3\kappa\)\int\limits^t_0 g^\kappa (s)\d s.
\end{align}
Taking the expectation we get
\begin{align*}
 E(g^\kappa(t)) &\le  \({\mathrm C}(e) + \alpha + 3\kappa\) \int\limits^t_0 E(g^\kappa (s)) \d s \\
&+ 2 \kappa \int_0^t E(\Vert X(s,\cdot) \Vert^2_{L^2}) \d s,
\end{align*}
for every $t\in[0,T]$.
By Gronwall lemma we get
\begin{equation} \label{e7.14bis}
E\(g_\kappa(t)\) \leq 2 \kappa E \left\{ \int\limits^T_0 \d s
\norm{X(s,\;\cdot\;)}^2_{L^2}\right\}e^{\({\mathrm C}(e) +\alpha+3\kappa\)T}, 
\quad \forall \; t\in [0,T].
\end{equation}
Taking the supremum and letting $\kappa \to 0$, item a) of Lemma \ref{L7.4}
 is now established. \\
We go on with item b). Since $\psi$ is Lipschitz, \eqref{e7.13} implies that,
 for $t\in [0,T]$,
\begin{align*}
& \int\limits^t_0 \d s\norm{\psi \(X^\kappa(s,\;\cdot\;)\)-\psi \(X(s,\;\cdot\;)\)}^2_{L^2}\\
\leq & \frac{1}{\alpha} \d s \<\psi \(X^\kappa(s,\;\cdot\;)\)-\psi \(X(s,\;\cdot\;)\),\; X^{(\kappa)}(s,\;\cdot\;)-X(s,\;\cdot\;)\>_{L^2}\\
\leq & \frac{\kappa}{2\alpha} \int\limits^t_0 \d s \norm{X(s,\;\cdot\;)}^2_{L^2}\\
& + {\mathrm C}(e,\alpha)  \int\limits^t_0 g_\kappa(s) \d s + M^\kappa_t,
\end{align*}
where ${\mathrm C}(e,\alpha)$ is a constant depending on
 $e^i, 0\leq i\leq N$ and $\alpha$.
Taking the expectation for $t=T$, we get
\begin{align*}
& E\(\int\limits^T_0 \d s\norm{\psi \(X^\kappa(s,\;\cdot\;)\)-\psi \(X(s,\;\cdot\;)\)}_{L^2}^2\)\\
\leq & \frac{\kappa}{2 \alpha}  E\(\int\limits^T_0 \d s\norm{X(s,\;\cdot\;)}^2_{L^2}\)+ {\mathrm C}(e,\alpha) \int\limits^T_0 E(g_\kappa(s))\d s.
\end{align*}
Taking $\kappa \to 0$, 
 \eqref{e7.1}  and \eqref{e7.14bis} provide the conclusion of
 item b) of Lemma \ref{L7.4}.
\begin{enumerate}[c)]
\item Coming back to \eqref{e7.13}, and $t=T$, we have 
\begin{align*}
& 
\kappa \int\limits^T_0 \d s \norm{X^\kappa(s,\;\cdot\;)-X(s,\;\cdot\;)}_{L^2}^2\\
\leq & 2 \kappa \int\limits^T_0 \d s \norm{X(s,\;\cdot\;)}_{L^2}^2 + M^\kappa_T\\
& + \({\mathrm C}(e) +\alpha+3\kappa\) \int\limits^T_0 \d s g^\kappa(s).
\end{align*}
Taking the expectation we have
\begin{align*}
& \kappa E\(\int\limits^T_0 \d s\norm{X^\kappa(s,\;\cdot\;)-X(s,\;\cdot\;)}_{L^2}^2\)\\
\leq & 2\kappa E \left( \int\limits^T_0 \d s\norm{X(s,\;\cdot\;)}^2_{L^2}\right)\\
& + \({\mathrm C}(e) +\alpha+3\kappa\) E\(\int\limits^T_0 g^\kappa(s) \d s\).
\end{align*}
Using item a) and the fact that
\begin{equation*}
E\(\int\limits_{[0,T]\times\R} X^2 (s,\xi)\d s \d \xi\)<\9, 
\end{equation*}
the result follows.
Lemma \ref{L7.4} is finally completely established.
\end{enumerate}
\end{proof}
We need now another intermediate lemma concerning the paths of a solution to \eqref{PME}.
\begin{lemma} \label{L7.6}
For almost all $\o\in\O$, almost all $t\in[0,T]$,
\begin{enumerate}[1)]
\item $\xi \mapsto \psi(X(t,\xi,\o))\in H^1(\R)$,
\item $\xi \mapsto \Phi(X(t,\xi,\o))$ is continuous.
\end{enumerate}
\end{lemma}
\begin{proof}[Proof]
Item 1) is established in \cite{BRR3}, see Definition 
3.2 and Theorem 3.4. 1) implies that $\xi 
\mapsto \psi(X(t,\xi,\o))$ is continuous. By the same arguments as in Proposition 4.22 in \cite{BRR2}, we can deduce item 2).
\end{proof}
\begin{description}
\item{3)} We go on with the proof of Theorem \ref{T73}. 
We keep in mind i), ii), iii), iv) near \eqref{e7.3}. Since $\Phi$ is bounded, using Burkholder-Davies-Gundy inequality are obtains
\begin{equation*}
E^{Q^{\kappa}(\;\cdot\;,\o)}\(Y_t-Y_s\)^4 \leq \const(t-s)^2.
\end{equation*}
On the other hand, for all 
$Q^\kappa (\;\cdot\;,\o), \; Y_0$ is distributed according to $x_0$. By Kolmogorov-Centsov criterion, see for instance an easy adaptation of \cite{KARSH}, Problem 4.11 of Section 2.4, for $\o\in\O$ a.s., the probabilities $Q^\kappa (\;\cdot\;,\o), \; \kappa>0$ are tight. Consequently there is a sequence $Q^{\kappa_n} (\;\cdot\;,\o),\;(\kappa_n)$ depending on $\o$, converging weakly to some probability $Q (\;\cdot\;,\o)$ on $C\([0,T];\;\R\)$.
By Skorohod's theorem there is a new probability space $\(\O_1^\o,\shh^\o,
\tilde Q^\o\)$ and processes $Y^n(\;\cdot\;,\o)$ distributed as $Y(\;\cdot\;,\o)$ under $Q^{\kappa_n}(\;\cdot\;,\o)$, converging to some process
 $Y^\9(\;\cdot\;,\o)$ ucp under $\tilde Q^\omega$.
From now on we will denote again $ Q^\omega := \tilde Q^\omega $.
 In particular $Y^n(\;\cdot\;,\o)$ are local martingales with respect to their own filtrations such that $[Y^n(\;\cdot\;,\o)]_t = \int^t_0 \Phi^2_{\kappa_n}
\(X^{\kappa_n}(r,Y^n_r,\o)\)\d r$. We remark that $Y^n(\;\cdot\;,\o)-Y_0^n
(\cdot,\o)$
 are even square integrable martingales.
\item{4)} Let $X$ be the solution to the SPDE  \eqref{PME}. The next step
 consists in showing that for $P$ almost $\o,\; Y^\9(\;\cdot\;,\o)$ is a weak solution to the equation
\begin{equation}\label{e7.15}
Y^\9_t (\;\cdot\;,\o) = Y_0^\9 (\;\cdot\;,\o)+ \int^t_0 \Phi\(X(s,Y^\9_s, \o)\)\d \beta_s^\o,
\end{equation}
for some Brownian motion $\beta^\o$. We need here a technical lemma.
\end{description}
\begin{lemma} \label{L7.7}
For $\o \; \pas$, the random variables
\begin{equation}\label{e7.16}
\int^T_0 \(\Phi_{\kappa_n}\(X^{\kappa_n}(r,Y^n_r,\o)\)-
\Phi\(X (r,Y^\infty_r,\o)\)\)^2\d r
\end{equation}
converge to zero in $L^p(\O_1^\o, Q^\o),\; \forall\; p\geq 1$, 
and consequently in probability.
\end{lemma}
\begin{proof}[Proof]
It is of course enough to show that the $E^{Q^\o}$ 
expectation of \eqref{e7.16} goes to zero up to a subsequence.
This is bounded by $I_1(n)+I_2(n)$ where
\begin{align*}
I_1(n) = & 2E^{Q^\o}\(\int\limits^T_0 \d r\(\Phi_{\kappa_n} \( X^{\kappa_n}(r,Y^n_r,\o)\)-\Phi \(X(r,Y^n_r,\o)\)^2\)\)\\
I_2(n) = & 2E^{Q^\o}\(\int\limits^T_0 \d r\(\Phi \( 
X(r,Y^n,\o)\)-\Phi \(X(r,Y^\infty,\o)\)^2\)\).
\end{align*}
Since $\Phi\(X(r,\;\cdot\;)\)$ is continuous for almost all 
$(r,\o_1)\in[0,T]\times\O_1^\o$, by Lemma \ref{L7.6}, 
then $I_2(n)\substack{\longrightarrow\\n\to\9}0$ by an
 easy application of Lebesgue's dominated convergence theorem. Concerning $I_1(n)$, it is also enough to show the existence of a subsequence $(\kappa_{n_\ell})$ such that
\begin{equation*}
\(\Phi_{\kappa_n}\(X^{\kappa_n}(r,Y^n_r,\o)\)-\Phi\(X(r,Y^n_r,\o)\)\)^2
\substack{\longrightarrow\\n\to \9}0,
\end{equation*}
$\d Q^\o\times \d r$ a.e.. Since the Dol\'eans
 exponential is strictly positive, this will be guaranteed if we show that
\begin{equation*}
\(\Phi_{\kappa_n}\(X^{\kappa_n}(r,Y^n_r,\o)\)-\Phi\(X(r,Y^n_r,\o)\)\)^2 \she_r
 \(\int_0^\cdot \mu(\d s,Y_s^n)\)\substack{\longrightarrow\\n\to \9}0,
\end{equation*}
$\d Q^\o\times \d r$ a.e. Clearly, this will be verified, if we show that, for any $\varphi:[0,T]\times\R\to\R_+ $ continuous, with compact support, we have
\begin{equation}\label{e7.17}
E^{Q^\o}\(\int\limits^T_0 \varphi(r,Y^n_r)\she_r\(\int\limits^\cdot_0
 \mu(\d s, Y_s^n)\)
\(\Phi_{\kappa_n}\(X^{\kappa_n}(r,Y^n_r,\o)\)-\Phi\(X(r,Y^n_r,\o)\)\)^2 dr
\) 
\end{equation}
goes to zero, eventually up to a subsequence. \\
Since $X^{\kappa_n}$ constitute the $\mu$-marginal laws of $Y^n$, previous expression gives
\begin{align*}
& \int\limits^T_0 \d r \int\limits_\R \varphi(r,y)\(\Phi_{\kappa_n}
\(X^{\kappa_n}(r,y,\o)\)-\Phi\(X(r,y,\o)\)\)^2 X^{\kappa_n}(r,y,\o) \d y\\
\leq & I_{11}(n)+I_{12}(n)+I_{13}(n)+I_{14}(n)
\end{align*}
where
\begin{align*}
I_{11}(n) = & \int\limits^T_0 \d r \int\limits_\R \d y \vert \varphi(r,y) \vert 
\abs{\psi\(X^{\kappa_n}(r,y,\o)\)-\psi\(X(r,y,\o)\)},\\
I_{12}(n) = & \int\limits^T_0 \d r \int\limits_\R \d y 
\vert \varphi(r,y) \vert 
\Phi^2 \(X^{\kappa_n}(r,y,\o)\) \left \vert  (X-X^{\kappa_n})(r,y,\o)
 \right \vert,\\
I_{13}(n) = & \int\limits^T_0 \d r \int\limits_\R \d y
\vert  \varphi(r,y) \vert \kappa_n \abs{X^{\kappa_n}-X}(r,y,\o),\\
I_{14}(n) = & \int\limits^T_0 \d r \int\limits_\R \d y \kappa_n
 \abs{X(r,y,\o)}  \vert \varphi(r,y) \vert.
\end{align*}
By Cauchy-Schwarz, $I_{11}^2(n)$ is bounded by
\begin{equation*}
\norm{\varphi}^2_{L^2([0,T]\times\R)} \int\limits^T_0 \d r \int\limits_\R \(\psi \(X^{\kappa_n}(r,y,\o)\)-\psi\(X(r,y,\o)\)\)^2 \d y.
\end{equation*}
This converges to zero according to Remark \ref{R7.5} 2),
after extracting a further subsequence (not depending on $\o$). 
The square of $I_{12}(n)$ is bounded by
\begin{equation*}
\norm{\varphi}^2_{L^2([0,T]\times\R)} \int\limits_{[0,T]\times\R}
\d r\d y \Phi^4 \(X(r,y,\o)\)\abs{X^{\kappa_n}-X}^2(r,y,\o).
\end{equation*}
This goes to zero because of \eqref{e7.8} in Remark \ref{R7.5} 4).\\
$I^2_{13}(n)$ is bounded by
\begin{equation*}
\kappa_n \norm{\varphi}^2_{L^2([0,T]\times\R)}
 \kappa_n \int\limits_{[0,T]\times\R}\d r\d y \abs{X^{\kappa_n}-X}^2(r,y,\o).
\end{equation*}
After extracting a subsequence, previous expression
 converges to zero because of Lemma \ref{L7.4} c). Finally $I_{14}(n) \substack{\longrightarrow\\n\to \9}0$ by Cauchy-Schwarz and the fact that $\int\limits_{[0,T]\times\R}\d r\d y X^2 (r,y,\o)<\9 \; \pas$\\
This establishes the proof of Lemma \ref{L7.7}.
\end{proof}
\medskip
\begin{description}
\item{4)}
We go on 
with the proof of Theorem \ref{T73}. 
We want to prove that $Y^\infty$ is a weak-strong solution of 
 \begin{equation*}
 Y_t = Y_0 + \int\limits^t_0 \Phi \(X(s,Y_s,\;\cdot\;)\)\d B_s.
 \end{equation*}
According to
  Remark \ref{R2.10} d) item 1) of Definition \ref{DWeakStrong}, it is
 enough to show that for 
$\pas \o$ $Y:=Y^\9(\;\cdot\;,\o)$ is a solution of the following (local) martingale problem. For every $f\in C^{1,2}([0,T]\times \R)$ with compact support, the process
\begin{eqnarray}\label{e7.18}
Z_t^f&:=&f(t,Y_t)-f(0,Y_0)-\frac{1}{2} \int\limits^t_0 f'' (Y_s) \Phi^2\(X(s,Y,\o)\)\d s \nonumber\\
\\
&-& \in_0ç t \partial_s f(s,Y_s)
\end{eqnarray}
is a (local) martingale  under $Q^\o$.\\
For this it is enough to prove that 
under $Q^\o,\; Y$ is a local martingale with quadratic variation $[Y^\9(\;\cdot\;,\o)]_t=\int\limits^t_0 \Phi^2\(X(s,Y_s^\9 (\;\cdot\;,\o)\)\d s$.
According to Proposition \ref{A1} of the Appendix, it is enough to show that
\begin{align}\label{e7.19}
\int\limits^T_0 \d t \abs{\Phi_{\kappa_n}^2\(X(s,Y^n_s(\;\cdot\;,\o)\)-\Phi^2\(X(s,Y^\9_s(\;\cdot\;,\o)\)}\d s\\
\substack{\longrightarrow\\n\to \9}0,
\end{align}
in $Q^\o$ probability. Now, the expectation of \eqref{e7.19} is bounded by
\begin{align*}
& 2\norm{\Phi}_\9 E^{Q^\o}\(\int\limits^T_0 \d s \abs{\Phi_{\kappa_n}
\(X(s,Y^n_s(\;\cdot\;,\o)\)-\Phi\(X(s,Y^\9_s(\;\cdot\;,\o)\)}\)\\
\leq & 2\norm{\Phi}_\9 \left\{ E^{Q^\o}\(\int\limits^T_0 \d s 
\abs{\Phi_{\kappa_n}\(X(s,Y^n_s(\;\cdot\;,\o)\)-\Phi\(X(s,Y^\9_s(\;\cdot\;,\o)\)}^2\)\right\}^{\frac{1}{2}}
\end{align*}
This converges to zero by Lemma \ref{L7.7}.\\
This proves \eqref{e7.19} and that $Y^\9(\;\cdot\;,\o)$ is a weak solution of \eqref{e7.16}.
\end{description}
\begin{description}
\item{5)} The solution $Y^\9(\;\cdot\;,\o)$ of \eqref{e7.15} lives
 on a space $\O_1^\o \times \O$ where, 
$(\O_1^\o,\shh^\o, Q^\o)$ is a probability space  depending on $\o$.\\
We choose now  $\O_1 = C\([0,T];\; \R\) \times \R \times
  C\([0,T];\; \R\)$ and we select
 $Q^{\omega} := Q(\;\cdot\;,\o)$ being the law of
 $Y^\9(\;\cdot\;,\o)$ on $\O_1$. Again 
we set $Y_t(\o_1,\o)= \omega_1^0(t) + a$, this time with 
 $\omega_1 = (\omega_1^0,a,\omega_1^1)$.\\
We have to show that $Y$  is a weak-strong solution of
 \begin{equation*}
 Y_t = Y_0 + \int\limits^t_0 \Phi \(X(s,Y_s,\;\cdot\;)\)\d B_s.
 \end{equation*}
For the moment we have shown that
$Y_t(\cdot,\omega) - Y_0(\cdot,\omega)$ is a martingale 
under $Q^\omega$ for almost all $\omega$
with  $Q^\omega$-quadratic variation given by 
$\int_0^t \Phi^2(X(s, Y(\cdot,\omega)_s) \d s.$
We need to construct a process $B$ on $\Omega \times \Omega_1$,
such that for almost all $\omega$, $B$ is a 
 $Q^\omega$-Brownian motion and
\eqref{DWS} holds for $\gamma(t,\cdot, \omega) = \Phi(X(t,\cdot,\omega))$.
Let $\beta_t(\omega_1)  =   \omega_1^1(t),$ a supplementary Brownian motion
on $\Omega_1$ which is $Q^\omega$-independent on $Y$ and
we remind that $Y_t(\omega_1) = \omega_1^0(t) + a$.
$\beta$ can be also considered as a Brownian motioon on $\Omega \times 
\Omega_1$ which is $Q$-independent of $Y$ and $(\shf_t)$.\\
We define
$$ B_t(\cdot,\omega) = \int_0^t \d Y_s(\cdot,\omega) 
1_{\{\gamma(s,\xi,\omega) \neq 0\}} \frac{1}{\gamma(s,\xi,\omega)}
+ \int_0^t   1_{\{\gamma(s,\xi,\omega) = 0\}} \d \beta_s.$$ 
 Now for $Q^\omega$-a.s. the quadratic variation of 
the $Q^\omega$-martingale $B(\cdot,\omega)$
is $t$, so that, by L\'evy characterization theorem,
 $B(\cdot, \omega)$ is a Brownian motion under $Q^\omega$.

It remains to show  items 2) and 3) of the definition  of  weak-strong solution.
Let $(\shy_t)$ be the canonical filtration of the process 
$Y(\cdot, \omega)$. Item 3) follows because of item 1)
and because $\gamma(t,\cdot, \omega) = \Phi(X(t,\cdot,\omega))$
is progressively measurable.
Concerning item 2) we see that 
under $Q$ defined by $P$ and the kernel $Q(\cdot,\omega)$,
$W^1, \ldots,W^N$ are $Q$-martingales with $(\shg_t)$
as defined in Definition \ref{DWeakStrong}.
Indeed let $F$ be a bounded $\shf_s$-measurable random variable and $A$ be a
bounded $\shy_s$-measurable r.v. 
Let $1 \le i \le N$. 
By item 3) $E^{Q^{\omega}} (A)$ is $\shf_s$-mesurable, so 
$$ E^Q((W^i_t - W^i_s) F A) =  E^P((W^i_t - W^i_s) F 
E^{Q^{\omega}} (A) ) =  0, $$
since $W^i$ is an  $\shf_s$-martingale.


\item{6)}
 The final step consists in proving that $X$ is the family of $\mu$-marginal laws of $Y$, under $Q$ defined by
  $\int_\O Q(\cdot\;,\o) dP(\o) $. Let $\o\in\O$ outside some $P$-null set.\\
By step 1) of the proof of this Theorem \ref{T73},
we know that $X^\kappa$ fulfills, for almost all $\o$,
$$
\int\limits_\R \d \xi X^\kappa (t,\xi)\varphi(\xi)
=  E^{Q^\kappa (\;\cdot\;,\o)} \(\varphi(Y_t)\she_t \(\int\limits^\cdot_0
 \mu (\d s,Y_s)\)\),
$$
for all $\varphi\in\shs(\R)$.
We recall that, according to the lines before step 3) of this proof,
after use of Skorohod theorem and a change of 
probability space (depending on $\o$), there is a sequence $(\kappa_n)$
and processes $Y^n(\cdot, \o)$ converging ucp to $Y^\infty(\cdot, \o)$
such that
\begin{equation}\label{e7.20}
\int\limits_\R \d \xi X^{\kappa_n} (t,\xi)\varphi(\xi)=E^{Q^\o}
 \(\varphi(Y^n)\she_t \(\int\limits^\cdot_0 \mu (\d s,Y^n)\)\),
\end{equation}
for every $\varphi\in\shs(\R)$.
It remains to show, for every $t\in[0,T],\; \varphi\in \shs(\R)$, that
\begin{equation}\label{e7.21}
\int\limits_\R \d \xi\varphi(\xi) X(t,\xi,\o) = E^{Q^\o} \(\varphi(Y^\9_t)\she_t
 \(\int\limits^\cdot_0 \mu (\d s,Y^\9_s)\)\).
\end{equation}
Let $\o\in\O$ outside a $P$-null set.\\
Since $t \mapsto X(t,\;\cdot\;)$ is continuous from $[0,T]$ to 
$\shs'(\R)$ and the right-hand side is continuous on $[0,T]$ for fixed $\varphi\in\shs(\R)$, it is enough to show \eqref{e7.21} for almost all $t\in[0,T]$.\\
Now for almost all $t$, the left-hand side of \eqref{e7.21} is approached by the left-hand side of \eqref{e7.20}. It remains to show that the right-hand side of \eqref{e7.21} is the limit of the right-hand side of \eqref{e7.20}.
\end{description}

According to Proposition \ref{A3} in the Appendix it will be enough to show the following.
\begin{enumerate}[i)]
\item
\begin{equation}\label{e7.22}
\sum^N_{i=1} \int\limits^t_0 e^i (Y^n_s) \d W^i_s -\frac{1}{2} \int\limits^t_0 e^i (Y^n_s)^2 \d s
\end{equation}
converges in law (with respect to $Q^\o$) to
\begin{equation}\label{e7.23}
\sum^N_{i=1} \(\int\limits^t_0 e^i (Y_s) \d W^i_s -\frac{1}{2} \int\limits^t_0 e^i (Y_s)^2 \d s\).
\end{equation}
\item $\varphi(Y^n_t) \she \(\int\limits^\cdot_0 \mu (\d s,Y^n_s)\) $ is a sequence which is uniformly integrable. 
\end{enumerate}
 We check now those properties.\\
i) \eqref{e7.22} equals
\begin{equation*}
J_1(n) + J_2(n)
\end{equation*}
where
\begin{align*}
J_1(n) = & \sum^N_{i=1} \left\{W^i_s e^i (Y^n_s)-\frac{1}{2} \int\limits^t_0 e^i (Y^n_s)^2 \d s\right.\\
& \left. -\frac{1}{2} \int\limits^t_0 W^i_s (e^i)'' (Y^n_s) \Phi^2_{\kappa_n} \(X^{\kappa_n} (s,Y^n_s,\o)\)\d s\right\}\\
J_2(n) = & - \sum^N_{i=1} \int\limits^t_0 W^i_s (e^i)' (Y^n_s) \d Y^n_s.
\end{align*}
Lemma \ref{L7.7} implies that
\begin{align*}
\int\limits^T_0 \abs{\Phi_{\kappa_n}^2\(X^{\kappa_n}(s,Y^n_s,\o)\)-\Phi^2\(X(s,Y_s,\o)\)}\d s \underbrace{\longrightarrow}_{n \rightarrow \infty}
0
\end{align*}
in probability. Consequently, since $Y^n\to Y$ ucp, it follows that
$J_1(n)$ converges in probability to
\begin{align*}
J_1  := & \sum^N_{i=1} \(W^i_t e^i (Y_t)-\frac{1}{2} \int\limits^t_0 e^i 
(Y_s)^2 \d s\right.\\
& -\frac{1}{2} \int\limits^t_0 W^i_s (e^i)'' (Y_s) \Phi^2 \(X (s,Y_s,\o)\)\d s.\\
\end{align*}
According to \cite{pages},
$J_2(n) \to J_2$
 in law,
where $J_2 =  \int\limits^t_0 W^i_s (e^i)' (Y_s)\d Y_s$. 
 This implies that \eqref{e7.22} converges in law to \eqref{e7.23} and item i) is established.\\
To prove ii) we only need to prove that
\begin{equation*}
\sup_n E^{Q^\o} \(\she \(\int\limits^t_0 \mu (\d s,Y_s^n)\)^2\)<\9.
\end{equation*}
The integrand of previous expectation gives
\begin{equation*}
\exp \( 2(J_1(n)+J_2(n))\).
\end{equation*}
For each $\o,\; \exp \( 2(J_1(n)\) $ is bounded, so it remains to prove that,
 for every $0\leq i\leq N$
\begin{equation}\label{e7.24}
\sup_n E^{Q^\o} \(\exp \( -2\int\limits^t_0 W^i_s (e^i)' (Y^n_s)\d Y^n_s\)\)<\9.
\end{equation}
Since $-2\int\limits^t_s W^i_s (e^i)' (Y^n_s)\d Y^n_s $ is a $Q^\o$-martingale,
\begin{equation*}
\she_t^n :=  \exp\( -2\int\limits^t_0 W^i_s (e^i)' (Y^n_s)\d Y^n_s
 - 2\int\limits^t_0 (W^{i})^2_s (e^i)^{'2} (Y^n_s)
\Phi^2_{\kappa_n}\(X^{\kappa_n}(s,Y_s^n,\o)\)\d s\)
\end{equation*}
is an (exponential) martingale.
Consequently \eqref{e7.24} is bounded by
\begin{align*}
& \sup_n E^{Q^\o} \(\she_t^n \exp \(2\int\limits^t_0 (W^{i})^2_s (e^{i'})^{2}
 (Y^n_s)
 \Phi^2_{\kappa_n}(X^{\kappa_n}(s,Y_s^n,\o))\d s\) \)\\
\leq & \exp \(2\norm{e^{i'}}^2_\9 \(\norm{\Phi}^2_\9 + \kappa_n\) 
\int\limits^T_0 (W^{i}_s)^2 \d s\).
\end{align*}
This quantity is bounded for each $\o$, ii) is now established and so is
 the step 6) of Theorem \ref{T73}.
\end{proof}

\begin{appendix}
\section{Technicalities}

\setcounter{equation}{0}

\begin{prop} \label{A1}
Let $(\O,\shh,Q_1)$ be a probability space. Let $(Y^n)$ be a sequence of continuous local martingales such that $Y^n \to Y$ ucp; we suppose the existence of a
  adapted continuous process $A$, adapted to the canonical filtration
associated with $Y$ such that the total variation 
of $[Y^n]_t-A_t$ goes to zero in probability for every $t\in [0,T]$.\\
Then $Y$ is a local martingale whose quadratic variation is $A$.
\end{prop}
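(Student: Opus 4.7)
The plan is to establish simultaneously that $Y$ is a local martingale and that its quadratic variation equals $A$, by localizing and transferring both the martingale property of $Y^n$ and the martingale property of $(Y^n)^2 - [Y^n]$ to the limit. More precisely, I will show that there exists a sequence of stopping times $\sigma_k \uparrow T$ (or $\uparrow +\infty$, depending on the convention) adapted to the canonical filtration of $Y$ such that, for every $k$, both $Y_{\cdot \wedge \sigma_k}$ and $Y^2_{\cdot \wedge \sigma_k} - A_{\cdot \wedge \sigma_k}$ are genuine martingales. By the uniqueness of the quadratic variation of a continuous local martingale, this forces $[Y] = A$.

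The localization goes as follows. Set $\sigma_k := \inf\{t \in [0,T] : |Y_t| + A_t \geq k\}$, with the usual convention $\sigma_k = T$ if the set is empty; since $Y$ and $A$ are continuous and adapted to the canonical filtration of $Y$, these are stopping times and $\sigma_k \uparrow T$ a.s. To localize $Y^n$ in a compatible way, I set $\sigma_k^n := \inf\{t \in [0,T] : |Y^n_t| + [Y^n]_t \geq k+1\}$. The ucp convergence $Y^n \to Y$ and the hypothesis that $\sup_{s\leq T}|[Y^n]_s - A_s|\to 0$ in probability (which follows from the total variation assumption) imply that, for each $k$, $Q_1\{\sigma_k^n < \sigma_k\} \to 0$ as $n \to \infty$. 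On the event $\{\sigma_k^n \geq \sigma_k\}$ we have $|Y^n_{t\wedge \sigma_k^n}| \leq k+1$ and $[Y^n]_{t\wedge \sigma_k^n}\leq k+1$, and since $Y^n_{\cdot\wedge\sigma_k^n}$ is a bounded continuous martingale, $E[(Y^n_{t\wedge\sigma_k^n})^2] = E[[Y^n]_{t\wedge\sigma_k^n}] \leq k+1$. This uniform $L^2$ bound will provide the uniform integrability needed below.

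Now for $0 \le s < t \le T$ and any bounded $\shh_s$-measurable $F$, one has
\begin{equation*}
E\bigl[(Y^n_{t\wedge\sigma_k^n} - Y^n_{s\wedge\sigma_k^n})F\bigr] = 0,
\quad
E\bigl[\bigl((Y^n_{t\wedge\sigma_k^n})^2 - [Y^n]_{t\wedge\sigma_k^n} - (Y^n_{s\wedge\sigma_k^n})^2 + [Y^n]_{s\wedge\sigma_k^n}\bigr)F\bigr] = 0.
\end{equation*}
I will pass to the limit in both identities. Continuity of $Y$ and $A$ together with $Y^n \to Y$ ucp and $[Y^n]\to A$ ucp imply that $\sigma_k^n \wedge u \to \sigma_k \wedge u$ in probability for every $u \in [0,T]$, hence $Y^n_{u\wedge\sigma_k^n}\to Y_{u\wedge\sigma_k}$ and $[Y^n]_{u\wedge\sigma_k^n}\to A_{u\wedge\sigma_k}$ in probability; combined with the uniform $L^2$ bound above, Vitali's theorem yields $L^1$-convergence. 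Passing to the limit gives $E[(Y_{t\wedge\sigma_k}-Y_{s\wedge\sigma_k})F]=0$ and $E[(Y^2_{t\wedge\sigma_k}-A_{t\wedge\sigma_k}-Y^2_{s\wedge\sigma_k}+A_{s\wedge\sigma_k})F]=0$, so that $Y_{\cdot\wedge\sigma_k}$ and $Y^2_{\cdot\wedge\sigma_k}-A_{\cdot\wedge\sigma_k}$ are martingales. Letting $k\to\infty$ delivers the conclusion.

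The main obstacle I anticipate is the joint control of the two different localizing sequences $\sigma_k$ and $\sigma_k^n$: one must show that the stopped processes converge in a sense strong enough to transfer the identities above. Continuity of $Y$ and $A$ makes the map $\omega\mapsto \sigma_k(\omega)$ lower semicontinuous with respect to uniform convergence, and the bound $\sigma_k^n\geq\sigma_k$ with overwhelming probability is precisely what rules out the pathological case where $\sigma_k^n$ would collapse to zero. The hypothesis on \emph{total variation} convergence of $[Y^n]-A$ (not merely pointwise) is essential here, since it guarantees uniform-in-$t$ control of $[Y^n]_t$ up to the stopping time $\sigma_k$ and thus the uniform $L^2$ bound used in Vitali's theorem.
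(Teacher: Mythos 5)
Your localization strategy and the idea of proving simultaneously that $Y$ and $Y^2-A$ are local martingales is a legitimate alternative to the paper's martingale-problem formulation (the paper tests against $f(Y_t)-\tfrac12\int_0^t f''(Y_r)\,\d A_r$ for $f\in C^2$; testing against $Y$ and $Y^2-A$ is equivalent). However, your proof has a genuine gap in the passage to the limit, and it is precisely the point that the paper's Remark~\ref{A2}~ii) flags as the real difficulty: \emph{no common filtration is specified}. You write the martingale identity
$E\bigl[(Y^n_{t\wedge\sigma_k^n}-Y^n_{s\wedge\sigma_k^n})F\bigr]=0$
for ``any bounded $\shh_s$-measurable $F$.'' But $(\Omega,\shh,Q_1)$ is just a probability space, not a filtered space, and each $Y^n$ is only a local martingale with respect to \emph{its own} natural filtration. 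The identity above therefore only holds when $F$ is $\sigma(Y^n_r:r\le s)$-measurable, whereas what you need in the limit is the identity tested against $F$ measurable with respect to $\sigma(Y_r:r\le s)$. These are different $\sigma$-algebras and you cannot simply insert the same $F$ on both sides. The paper's device is exactly designed to bridge this: take $F=\Theta(Y_r:r\le s)$ with $\Theta:C([0,s])\to\R$ bounded continuous, use $F^n:=\Theta(Y^n_r:r\le s)$ in the $Y^n$-identity (which \emph{is} $\sigma(Y^n)$-measurable), and then exploit the ucp convergence $Y^n\to Y$ together with the continuity of $\Theta$ to get $F^n\to F$ in probability. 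Your proof does not address this, so as written the step ``passing to the limit gives $E[(Y_{t\wedge\sigma_k}-Y_{s\wedge\sigma_k})F]=0$'' is not justified.

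A second, more minor, issue is your claim that $\sigma^n_k\wedge u\to\sigma_k\wedge u$ in probability. Exit times are not in general continuous functionals of the path in the uniform topology: if the limiting path grazes the level $k$ without crossing it, a small perturbation can change the hitting time discontinuously. The standard repair is to note that for a.e.\ $\omega$ the increasing map $k\mapsto\sigma_k(\omega)$ has at most countably many discontinuities, so that for all but countably many $k$ the convergence $\sigma^n_k\to\sigma_k$ holds a.s.\ (hence in probability), and this suffices because one ultimately sends $k\to\infty$ along any such dense set. You should either invoke this argument explicitly or, better, follow the paper and avoid stopped processes on the $Y^n$ side altogether by working with the unstopped martingale identity against $\Theta^n_s$ and controlling the term $\int_s^t f''(Y^n_r)\,\d[Y^n]_r$ via the BDG bound $E[[Y^n]_T]\le C\,E[\sup_t|Y^n_t|]$ and the total variation hypothesis on $[Y^n]-A$, which is what the paper does in its terms $I_4(n)$ and $I_5(n)$.
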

\begin{rem} \label{A2}
\begin{enumerate}[i)]
\item A local martingale is in particular a local martingale with respect to its own filtration.
\item 
This proposition should be known in the literature but for the moment
 we cannot find the reference.
The difficulty is that there is no filtration specified.
\end{enumerate}
\end{rem}
\begin{proof}[Proof of Proposition \ref{A1}]

After a localization procedure, we can suppose that
the process $Y$ is bounded and $Y^n$ are bounded by the same
constant. 
By classical approximation techniques we only need to show that
\begin{equation}\label{eFA2}
\tag{A.1} 
f(Y_t) - \frac{1}{2}  \int^t_0 f''(Y_s) \d s
\end{equation}
is a martingale, for every $f\in C^2$ with compact support.
Let $0\leq s<t\leq T$ and $\Theta:C([0,s])\to \R$ be a bounded and continuous
 functional.
We need to show that 
\begin{equation}\label{eA3}
\tag{A.2}
E\( (f(Y_t)-f(Y_s)-\frac{1}{2} \int^t_s f''(Y_r) \d A_r) \Theta_s\)=0,
\end{equation}
where $\Theta_s =\Theta \( Y_r:r\leq s\right)$. The left-hand side of
 \eqref{eA3} equals
\begin{equation*}
I_1(n)+ I_2(n),
\end{equation*}
where
\begin{equation*}
I_1(n) = E\( \(f(Y_t)-f(Y_s)\) \Theta_s -\(f(Y_t^n)-f(Y^n_s) \)\Theta^n_s\)
\end{equation*}
and
\begin{equation*}
\Theta^n_s = \Theta (Y^n_r:r\leq s),\; \Theta_s = \Theta (Y_r:r\leq s),
\end{equation*}
\begin{align*}
I_2(n) & = E\( \(f(Y_t^n)-f(Y_s^n) -\frac12 \int^t_s f''(Y_r^n) \d [Y^n]_r \) \Theta^n_s\),\\
I_3(n) & = E\( \frac12 \int^t_s f''(Y_r^n) \d [Y^n]_r 
\(\Theta_s^n - \Theta_s\)\)\\
I_4(n) & = E\( \left\{ \frac12 \int^t_s (f''(Y_r^n) - f''(Y_r))
\d [Y^n]_r\right\} 
\Theta_s\),\\
I_5(n) & = E\( \left\{ \frac12 \int^t_s (f''(Y_r))\d ([Y^n]-A_r)\right\} 
\Theta_s\).
\end{align*}
Observe that $I_2(n)=0$ since $Y^n$ is a martingale.\\
Taking into account the ucp convergence of $Y^n$
to $Y$, it is not difficult to show that
\begin{equation*}
I_i(n)\to 0,\; i=1,3.
\end{equation*}
By BDG inequality there is a constant $C>0$ such that
\begin{equation} \label{BDG1}
\tag{A.3}
 E([Y^n]_T) \le C E( \sup_{t \le [0,T]} \vert Y^n_t \vert).
\end{equation}
As far as $I_5(n)$ is concerned, we have 
$[Y^n] \rightarrow A $ ucp, taking into account the fact that
$A$ is a continuous process, the convergence in probability
and a Dini type argument, see e.g. Lemma 3.1 of \cite{rv4}.
So after extraction of subsequences we get
\begin{equation} \label{E38}
\tag{A.4}
\int_s^t Z_r (\d [Y^n] - \d A)_r \rightarrow 0,
\end{equation}
in probability, for the continuous process $Z_r = f''(Y_r)$.
On the other hand
$$ E( \int_s^t Z_r \d [Y^n]_r)^2 \le C \Vert f'' \Vert_\infty 
   E([Y^n]T),$$
is uniformly bounded by \eqref{BDG1} and so the family of r.v. in \eqref{E38}
is uniformly integrable.
Finally \eqref{E38} also holds in $L^1$. \\
Concerning $I_4(n)$, since $f''(Y^n)$ converges to $f''(Y)$
a.s. uniformly, by \eqref{BDG1}, the integral inside the expectation 
converges to zero a.s.  Then, by Lebesgue dominated convergence theorem
its expectation, i.e.
$I_4(n)$ converges to zero.


Finally the result follows.
\end{proof}
\begin{prop} \label{A3}
Let $(Z_n)$ be a sequence of random elements with values in a Banach
 space $E$ converging in law to some random element  $Z$ 
still with values in $E$.
Let $\psi:E\to \R$ continuous, such that $(\psi(Z_n))$ are uniformly
 integrable. Then $\lim\limits_{n\to \infty} E\(\psi (Z_n)\right) = E\(\psi (Z)\right)$.
\end{prop}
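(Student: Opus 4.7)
\textbf{Proof plan for Proposition \ref{A3}.} This is a classical Vitali type statement and the plan is to reduce it to the case of bounded continuous test functions (where weak convergence applies directly) by a truncation argument, with uniform integrability controlling the tails.

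First, I would apply the continuous mapping theorem: since $\psi:E\to\R$ is continuous and $Z_n\to Z$ in law, the real valued random variables $\psi(Z_n)$ converge in law to $\psi(Z)$. Next, observe that $\psi(Z)$ is integrable: uniform integrability gives $M:=\sup_n E(|\psi(Z_n)|)<\infty$, and the Portmanteau theorem applied to the lower semicontinuous functional $x\mapsto |\psi(x)|$ (writing it as the supremum of bounded continuous truncations) yields $E(|\psi(Z)|)\le \liminf_n E(|\psi(Z_n)|)\le M$.

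The key step is the truncation. For $K>0$ set $\tau_K(x)=(x\wedge K)\vee(-K)$; then $\tau_K\circ\psi:E\to\R$ is bounded and continuous, so by the very definition of weak convergence
\begin{equation*}
E(\tau_K(\psi(Z_n)))\xrightarrow[n\to\infty]{} E(\tau_K(\psi(Z))).
\end{equation*}
The estimate $|\psi(Z_n)-\tau_K(\psi(Z_n))|\le |\psi(Z_n)|\,\mathbf 1_{\{|\psi(Z_n)|>K\}}$, combined with the uniform integrability hypothesis, gives
\begin{equation*}
\sup_n |E(\psi(Z_n))-E(\tau_K(\psi(Z_n)))|\le \sup_n E\bigl(|\psi(Z_n)|\,\mathbf 1_{\{|\psi(Z_n)|>K\}}\bigr)\xrightarrow[K\to\infty]{} 0,
\end{equation*}
and the analogous bound holds on the limit side thanks to $\psi(Z)\in L^1$ (dominated convergence as $K\to\infty$).

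A triangle inequality then concludes: given $\varepsilon>0$, choose $K$ so that both tail terms are below $\varepsilon/3$, then choose $n$ large so that $|E(\tau_K(\psi(Z_n)))-E(\tau_K(\psi(Z)))|<\varepsilon/3$. I do not foresee a genuine obstacle; the only point requiring some care is to handle the limit side without invoking Skorokhod representation (which would force a separability assumption on $E$), and this is precisely why I prefer the truncation approach above over a direct a.s.\ convergence argument.
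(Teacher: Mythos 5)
Your proof is correct, but it follows a genuinely different route from the paper's. The paper invokes the Skorohod representation theorem: it passes to a new probability space $(\wt\Omega,\wt\shf,\wt P)$ carrying copies $\wt Z_n,\wt Z$ with the same laws such that $\wt Z_n\to\wt Z$ almost surely, then uses continuity of $\psi$ together with uniform integrability of $(\psi(\wt Z_n))$ to conclude $\psi(\wt Z_n)\to\psi(\wt Z)$ in $L^1(\wt P)$, whence the convergence of expectations. Your argument avoids Skorohod altogether: you use the continuous mapping theorem to get $\psi(Z_n)\to\psi(Z)$ in law as real-valued variables, truncate with $\tau_K$, apply the definition of weak convergence to the bounded continuous map $\tau_K\circ\psi$, and control both tails uniformly (via the UI hypothesis on the $n$-side, and via integrability of $\psi(Z)$ obtained from Portmanteau on the limit side). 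Each approach has its merits: the Skorohod route is shorter and more conceptual, but it quietly requires the Skorohod representation theorem to apply, which in its classical form needs $E$ to be Polish (or at least the limit law to have separable support) --- a hypothesis the proposition does not state, though it is satisfied in all the paper's applications (e.g.\ $E=C([0,T];\R)$). Your truncation argument is more elementary, is self-contained, and works for an arbitrary Banach space $E$ without any separability assumption, which is precisely the point you flag at the end. Both proofs are valid; yours is marginally more general.
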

\begin{proof}[Proof of Proposition \ref{A3}]
According to Skorohod theorem, there is a new probability space 
$(\wt \O,\wt \shf,\wt P)$, random elements
$ \wt Z_n (\text {resp. }\wt Z)$ on $(\wt \O,\wt \shf,\wt P)$ with
 the same distribution as $Z_n (\text{resp. }Z)$ and $\wt Z_n \to \wt Z$ 
a.s. So $\psi(\wt Z_n)\to \psi (\wt Z)$ a.s.
Clearly the sequence $\psi(\wt Z_n)$ is also uniformly integrable; finally $\psi(\wt Z_n)\to \psi(\wt Z)$ in $L^1(\wt P)$. In particular $E^{\wt P}(\psi(\wt Z_n)) \substack{\to\\n\to \9} E^{\wt P}(\psi(\wt Z))$, and so the result follows.
\end{proof}
\begin{prop} \label{A4}
Let $Y_0$ be distributed according to $x_0$. Let $a:[0,T]\times\R\to\R$ be a Borel function such there are $0<c<C$ with
\begin{equation}
\label{eFA4}
\tag{A.5}
c \leq a(s,\xi)\leq C, \quad \forall \; (s,\xi)\in[0,T]\times \R.
\end{equation}
We fix $0 \le r  \le t \le T$.
We set $a_n(t,x)=\int\limits_\R \rho_n (x-y) a(t,y)\d y$ where $(\rho_n)$
 is the usual sequence of mollifiers converging to the Dirac delta.
The unique solutions $S^n$ to
\begin{equation}\label{eFA5}
S^n_t=Y_0 + \int^t_r a_n (s, S^n_s)\d B_s,
\end{equation}
$B$ being a classical Wiener process, converges in law to the (weak unique solution) of
\begin{equation}\label{eFA6}
S_t=Y_0+\int^t_r a (s, S_s)\d B_s,
\end{equation}
\end{prop}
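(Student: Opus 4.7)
Since $a_n$ is smooth and bounded (mollification preserves the two-sided bound $c\le a_n\le C$), \eqref{eFA5} admits a unique strong solution $S^n$, and the law of $(S^n)$ on $C([r,T];\R)$ is well defined. The plan is a standard Stroock--Varadhan martingale-problem scheme: establish tightness, identify any weak limit as a solution of the martingale problem associated with $\tfrac12 a^2 \partial_{\xi\xi}^2$, and conclude by the weak uniqueness already quoted from Exercise~7.3.3 of \cite{sv} in the proof of Proposition~\ref{P4.1}.

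\textbf{Tightness.} Using $\|a_n\|_\infty\le C$, the Burkholder--Davis--Gundy inequality yields
\[
E\bigl|S^n_t-S^n_s\bigr|^4 \le \mathrm{const}\cdot C^4\,|t-s|^2,\qquad r\le s\le t\le T,
\]
uniformly in $n$. Since $S^n_r=Y_0$ has the fixed law $x_0$, the Kolmogorov--Centsov criterion gives tightness of $(S^n)$ in $C([r,T];\R)$. Extract a subsequence, still denoted $(S^n)$, converging in law to some continuous process $S$ with $S_r$ distributed as $x_0$, and pass to a Skorohod realization where the convergence is a.s.\ uniform on $[r,T]$.

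\textbf{Passage to the martingale problem.} For $f\in C^2_c(\R)$, It\^o's formula applied to \eqref{eFA5} shows that
\[
M^{n,f}_t := f(S^n_t)-f(Y_0)-\tfrac12\int_r^t a_n^2(s,S^n_s)\,f''(S^n_s)\,\d s
\]
is a martingale with respect to the natural filtration of $S^n$. To pass to the limit the essential tool is the Krylov-type occupation-time estimate built into Exercise 7.3.3 of \cite{sv}: because the diffusion coefficient satisfies the uniform bound $c^2\le a_n^2\le C^2$, there is a constant $K=K(c,C,T)$ such that for every bounded Borel $g:[r,T]\times\R\to\R_+$ with compact support,
\[
\sup_{n}E\int_r^T g(s,S^n_s)\,\d s\;\le\;K\,\|g\|_{L^2([r,T]\times\R)}.
\]
Approximate $a$ in $L^2_{\mathrm{loc}}([r,T]\times\R)$ by a sequence $\alpha^m$ of bounded \emph{continuous} functions with the same two-sided bound, and split
\[
a_n^2(s,S^n_s)f''(S^n_s)-a^2(s,S_s)f''(S_s) = I^{n,m}_1+I^{n,m}_2+I^{n,m}_3,
\]
where $I^{n,m}_1=(a_n^2-(\alpha^m)^2)(s,S^n_s)f''(S^n_s)$, $I^{n,m}_2=((\alpha^m)^2(s,S^n_s)-(\alpha^m)^2(s,S_s))f''(S_s)$ and $I^{n,m}_3=((\alpha^m)^2-a^2)(s,S_s)f''(S_s)$. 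The time-integrated $L^1$-norms of $I^{n,m}_1$ and $I^{n,m}_3$ are controlled by the Krylov estimate applied to $(S^n)$ and to $S$ respectively, hence become arbitrarily small for $m$ large, uniformly in $n$; and $I^{n,m}_2$ vanishes in $L^1$ as $n\to\infty$ for each fixed $m$ by dominated convergence using a.s.\ uniform convergence $S^n\to S$ and continuity of $\alpha^m$. Therefore
\[
\int_r^\cdot a_n^2(s,S^n_s)f''(S^n_s)\,\d s\;\xrightarrow[n\to\infty]{L^1}\;\int_r^\cdot a^2(s,S_s)f''(S_s)\,\d s,
\]
so $M^{n,f}_t\to M^f_t:=f(S_t)-f(Y_0)-\tfrac12\int_r^t a^2(s,S_s)f''(S_s)\,\d s$ in $L^1$ and the martingale property passes to the limit. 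Thus $S$ solves the martingale problem for $\tfrac12 a^2\partial^2_{\xi\xi}$ with initial law $x_0$ at time $r$.

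\textbf{Conclusion and main obstacle.} By Exercise 7.3.3 of \cite{sv}, this martingale problem is well-posed under the assumption \eqref{eFA4}, so the law of $S$ is uniquely determined and coincides with the law of the weak solution of \eqref{eFA6}. As every subsequential limit of $(S^n)$ is this unique law, the whole sequence converges in law to $S$. The only nontrivial point is the identification of the limit drift $\tfrac12\int a_n^2(s,S^n_s)f''(S^n_s)\d s\to\tfrac12\int a^2(s,S_s)f''(S_s)\d s$; all difficulty is concentrated there because $a$ is merely Borel, and this is where the Krylov-type occupation estimate is unavoidable.
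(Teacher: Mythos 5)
Your proposal is essentially the same argument as the paper's: tightness from a fourth-moment BDG bound plus Kolmogorov--Centsov, identification of the limit via the martingale problem, and --- decisively --- the Krylov-type occupation-time estimate from Exercise 7.3.3 of \cite{sv} to control time-integrals of $L^2$ functions of the process, which is precisely what makes the merely Borel coefficient $a$ tractable. The organizational difference is in how the approximation error is split: the paper decomposes into two terms (coefficient error under $P^{r,y}_n$, plus process error), represents the first via the $L^2$-densities $q_n$ of the one-dimensional marginals obtained from the Krylov bound, and handles the second by a Banach--Steinhaus/density extension from continuous to $L^2$ test functions; you instead insert a continuous intermediary $\alpha^m$ and use a three-term triangle inequality, invoking the Krylov bound for $S^n$ and for the limit $S$.

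Two small points to tighten. First, the claim that the time-integrated $L^1$-norm of $I^{n,m}_1$ is ``arbitrarily small for $m$ large, uniformly in $n$'' is not quite right: by Krylov and Cauchy--Schwarz it is $\leq K\bigl(\norm{(a_n^2-a^2)f''}_{L^2}+\norm{(a^2-(\alpha^m)^2)f''}_{L^2}\bigr)$, and the first summand is only bounded, not small, for small $n$. The correct statement is $\limsup_{n\to\infty} E\int|I^{n,m}_1|\leq K\norm{(a^2-(\alpha^m)^2)f''}_{L^2}$, which suffices. Second, you invoke the Krylov estimate for the limit process $S$ in order to bound $I^{n,m}_3$; but $S$ is only known so far as a subsequential weak limit, not as a solution of the martingale problem, so the estimate for $S$ is not a priori available from Exercise 7.3.3. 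It does hold, but one must first pass the uniform-in-$n$ Krylov bound for $S^n$ to the limit (via weak convergence on continuous $g\geq 0$ with compact support, then a monotone-class or density extension to $L^2$). The paper's decomposition into $I_1(n)$ and $I_2(n)$ sidesteps this by applying Krylov only to $P^{r,y}_n$ and using Banach--Steinhaus to convert weak convergence on $C_c$ into convergence for $L^2$ test functions --- a cleaner route on this point, and worth adopting if you want to avoid the extra limiting step.
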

\begin{proof}
\begin{enumerate}[i)]
\item Let $r\in[0,T[,\; y\in \R$. According to Problem 7.3.3 of \cite{sv}, the equation
\begin{equation}\label{eFA7}
\tag{A.6}
S_t=y+\int^t_r a(s,S_s)\d B_s,
\end{equation}
admits a solution, which is unique in law. We denote by $P^{r,y}$ the law on $C([r,T])$ of the corresponding canonical process. The equation
\begin{equation}\label{eFA8}
S_t=y+\int^t_r a^n (s,S_s)\d B_s
\end{equation}
admits even a strong solution since $a^n$ is Lipschitz with linear growth. We denote with 
$P_n^{r,y}$ the corresponding law. Moreover, those processes are
 Markovian.
\item By the same mentioned problem 7.3.3 of \cite{sv}, there is a constant $C_1$ only depending on $c,C$ in \eqref{eFA4} and $T$ such that
\begin{equation}\label{eFA9bis}
\tag{A.7}
E^{P_n^{r,y}} \( \int_{[r,T]\times \R} f(r,S_r) \d r\) \leq C_1 \norm{f}_{L^2([0,T]\times \R)},
\end{equation}
for every $n\in \N \cup \{\9\},(r,y)\in [0,T[\times \R$, and every bounded
function $f:[0,T]\times \R \to\R$ with compact support.
For convenience in \eqref{eFA9bis} we set $P^{r,y}:=P_\infty^{r,y}$.
\item From ii), there are Borel functions $q_n:[0,T]^2\times \R \to \R, n\in \N \cup \{\9\}$ such that $(t,z)\to q_n(r,t,y,z)\in L^2([r,T]\times\R)$
 for every $(r,y)\in [0,T]\times \R$ and the law of $S_t$ under $P^{r,y}_n$ equals $q_n(r,t;y,\;\cdot\;) \ dt$ \ a.e.
Moreover
\begin{equation*}
\int_{[r,T]} q^2_n (r,t;y.z)\d t \d z \leq C_1^2.
\end{equation*}
\item For $0\leq r < t \leq T,\; y\in \R$ the laws $\(P^{r,y}_n \)$ are tight.
In fact, by Burkholder-Davies-Gurdy inequality, there are constants $C_2,C_3 >0$ such that
\begin{align*}
E^{P_n^{r,y}}\((S_t-S_r)^4\)\leq & \\
C_2 E^{P_n^{r,y}}\(\(\int^t_r a^2_n(s,S_s)\d s\)^2\) \leq & C_3 (t-r)^2.
\end{align*}
A slight adaptation of Problem 4.11 associated with Theorem 4.10 Chapter 2 of \cite{KARSH} implies the tightness.
\item In particular, for each subsequence there is a subsubsequence converging weakly.
\item In fact for every $0\leq r<t\leq T,\; y\in\R$, the sequence $\(P_n^{r,y}\)$ converges weakly to $P^{r,y}$.\\
To prove this, by point v) and the uniqueness in law of \eqref{eFA7}, it is enough to show that the limit of a weakly converging subsequence of $\(P_n^{r,y}\)$ (still denoted in the same manner) fulfills the martingale problem related to \eqref{eFA7}.
Let $Q^{r,y}$ be such a limit. Using the Markov property we only need to show that for every $f \in C^\9_0 ([0,T] \times \R)$
\begin{equation}\label{eFA9}
E^{Q^{r,y}}\(f(t,S_t) - f(0,y)-\int^t_r  \partial_{xx}f(s,S_s)\frac{a^2}{2}
 (s,S_s)\d s\) = 0
\end{equation}
For this, taking into account the fact that $P_n^{r,y} \to Q^{r,y}$ and the fact that
\begin{equation}\label{eFA10}
E^{P_n^{r,y}} \(f(t,S_t) - f(0,y)-\int^t_r \partial_{xx}f(s,S_s)\frac{a_n^2}{2}
 (s,S_s)\d s\),
\end{equation}
it will be enough to show that
\begin{align}\label{eFA11}
\tag{A.8}
E^{P_n^{r,y}} \(\int^t_r\partial_{xx}f  (s,S_s)\frac{a_n^2}{2} (s,S_s)\d s\),\\
\substack{\rightarrow\\ n\to \9}E^{Q^{r,y}}\( \int^t_r \partial_{xx}f(s,S_s)
\frac{a^2}{2} (s,S_s)\d s\).
\end{align}
The left hand side of \eqref{eFA11} equals $I_1(n)+I_2(n)$ where
\begin{align*}
I_1(n) & = E^{P_n^{r,y}} \(\int^t_r \partial_{xx}f(s, S_s)\frac{a_n^2-a^2}{2} (s,S_s)\d s\)\\
I_2(n) & = E^{P_n^{r,y}} \(\int^t_r\partial_{xx}f (s,S_s)\frac{a^2}{2} (s,S_s)\d s\)\\
& - E^{Q^{r,y}} \(\int^t_r  \partial_{xx}f(s,S_s)\frac{a^2}{2} (s,S_s)\d s\).
\end{align*}
By item iii) $I_1(n)$ gives
\begin{equation}\label{eFA12}
\int_{[r,T]\times \R} \d t \d z \partial_{xx}f (t,z) \frac{a_n^2-a^2}{2}(t,z)q(r,t;y,z).
\end{equation}
Since $ \partial_{xx}f$ has compact support, $\abs{a_n^2-a^2}\leq 2 C_1$, together with $a_n^2 \to a^2 \d t \d z$ a.e.,
 Cauchy-Schwarz and Lebesgue dominated convergence theorem imply that \eqref{eFA2} goes to zero when $n\to \9$.
Concerning $I_2(n)$, we only need to prove that for every
 $f:[0,T]\times \R\to\R$ bounded measurable with compact support verifies
\begin{equation}\label{eFA13}
\tag{A.9}
\lim_{n\to\9} E^{P_n^{r,y}} \(\int_r^T f (s,S_s)\d s\) = E^{Q^{r,y}}
 \(\int^T_r f (s,S_s)\d s\).
\end{equation}
Indeed we can prove \eqref{eFA13} holds for $f \in L^2([0,T] \times \R)$.
In fact, by the convergence in law, \eqref{eFA13} holds for every $f\in C^{0}(r,T)\times\R)$
with compact support.
\eqref{eFA9bis} and Banach-Steinhaus allow to establish \eqref{eFA13} and therefore the conclusion.
\end{enumerate}
\end{proof}

\section{Uniqueness for the porous media equation 
with  noise}

\setcounter{equation}{0}

We state here a general uniqueness lemma when the coefficient
$\psi: \R \rightarrow \R$ is Lipschitz.
Since the paper concerns one-space dimension porous media
type equation, we remain in that framework.
However, the theorem below easily extends to
the multi-dimensional case.

We consider here an infinite
nomber of modes for the random field $\mu$, i.e.
 $\mu(t,\xi)=\sum\limits^{\9}_{i=0} e^i(\xi)W^i_t$ where $W^i$ are
 independent Brownian motions, $e^i :\R^d\to\R\; \in 
L^1(\R^d)$ being $H^{-1}$ multipliers with norm $\shc(e^i)$,
$W^0_t  = t$.
\begin{theo} \label{TB1}  Let $x_0 \in \shs'(\R^d)$ and
 make the following assumptions.
\begin{enumerate}[i)]
\item $\psi$ is Lipschitz,
\item 
$ \sum_{i=1}^\infty  \left(\shc(e_i)^2 
 +  \Vert e^i\Vert_\infty^2 \right) < \infty.$
\end{enumerate}
Then equation \eqref{PME} admits at most one
 solution among the random fields $X:]0,T]\times\R\times\O\to\R$ such that 
\begin{equation}\label{eFB1}
\tag{B.1}
\int_{[0,T]\times\R} X^2(s,\xi)\d s\d \xi <\9 \quad \text{a.s.}
\end{equation}
\end{theo}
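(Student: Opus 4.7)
My plan is to reduce Theorem~\ref{TB1} to (a suitable infinite-mode extension of) Theorem~\ref{T51}, by linearising the nonlinearity via a difference quotient.

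Let $X^1, X^2$ be two solutions of \eqref{PME} satisfying \eqref{eFB1}, and set $z := X^1-X^2$. The first step is to define
\[
a(t,\xi,\omega) := \begin{cases} \dfrac{\psi(X^1(t,\xi,\omega))-\psi(X^2(t,\xi,\omega))}{X^1(t,\xi,\omega)-X^2(t,\xi,\omega)}, & X^1(t,\xi,\omega)\neq X^2(t,\xi,\omega),\\[1mm] 0, & \text{otherwise}. \end{cases}
\]
Since $\psi$ is Lipschitz and monotone increasing on $\R_+$ (and by construction odd/filled as in Remark~\ref{Rint}), $a$ is a bounded measurable $(\shf_t)$-progressively measurable random field with $0\leq a(t,\xi,\omega)\leq L$, where $L$ is the Lipschitz constant of $\psi$. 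Moreover, $\psi(X^1)-\psi(X^2)=a\,z$, so that in $\shs'(\R)$ we have, for all $t\in[0,T]$, $P$-a.s.,
\[
z(t,\cdot)=\tfrac12\int_0^t \bigl(a(s,\cdot)z(s,\cdot)\bigr)''\,\d s + \sum_{i=0}^\infty \int_0^t (e^i z)(s,\cdot)\,\d W^i_s.
\]
This places us exactly in the framework of Theorem~\ref{T51}, except that the noise has countably many modes rather than finitely many.

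The second step is to repeat the $H^{-1}$-energy argument of Theorem~\ref{T51} in this extended setting. I would regularise $z$ by a mollifier $\phi_\varepsilon$, apply the Hilbert-space It\^o formula to $g_\varepsilon(t):=\norm{z_\varepsilon(t,\cdot)}^2_{H^{-1}}$, and pass $\varepsilon\to 0$ exactly as in \eqref{e5.5}--\eqref{e5.11}. The only new point is that the martingale bracket and the drift produce series
\[
\sum_{i=1}^{\infty} \int_0^t \norm{(e^i z)(s,\cdot)}^2_{H^{-1}}\,\d s \;\leq\; \Bigl(\sum_{i=1}^\infty \shc(e^i)^2\Bigr) \int_0^t \norm{z(s,\cdot)}^2_{H^{-1}}\,\d s,
\]
and an analogous control for the local martingale part using $\sum_i \norm{e^i}^2_\infty<\infty$ to ensure the bracket is $L^1$ after localisation. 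Assumption ii) of the theorem is precisely what is needed to make these series absolutely convergent, so the limit identity (the analogue of \eqref{e5.12}) holds in the infinite-mode setting.

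The third step is to apply the elementary inequality $2bc\leq M b^2+M^{-1}c^2$ (with $M=\norm{a}_\infty\vee 1$) and the $H^{-1}$-multiplier property to derive, outside a $P$-null set,
\[
g(t)+\int_0^t \langle z,a\,z\rangle_{L^2}\,\d s \;\leq\; M_t + \shc\int_0^t g(s)\,\d s,
\]
where $M$ is a local martingale and $\shc$ depends only on $\norm{a}_\infty=L$ and on the summed multiplier/supremum norms of $(e^i)$. Then I localise along the stopping times $\varsigma^\ell:=\inf\{t:\int_0^t\norm{z(s,\cdot)}_{L^2}^2\,\d s\geq \ell\text{ or }\sup_{s\leq t}\norm{z(s,\cdot)}_{H^{-2}}^2\geq \ell\}$, which tend to $T$ a.s.\ by \eqref{eFB1} (and the fact that $\norm{\cdot}_{H^{-2}}\leq\norm{\cdot}_{L^2}$), take expectations, apply Gronwall, and let $\ell\to\infty$ via Fatou, exactly as at the end of the proof of Theorem~\ref{T51}. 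This yields $E(g(t))=0$ for every $t\in[0,T]$, hence $z\equiv 0$.

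The main obstacle I expect is purely technical: justifying the infinite-sum manipulations in the It\^o formula (convergence of $M^\varepsilon_t \to M_t$ in ucp as $\varepsilon\to 0$ when the sum over $i$ is infinite, and interchange of sum and integral in the quadratic variation). This is where the summability hypothesis ii) is essential; once it is in place, dominated convergence gives all the required passages. The rest is a routine adaptation of the proof of Theorem~\ref{T51}.
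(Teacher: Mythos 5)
Your proposal is correct and follows essentially the same $H^{-1}$-energy argument as the paper's own proof of Theorem~\ref{TB1}: mollification, Hilbert-space It\^o formula for $\Vert z_\varepsilon\Vert_{H^{-1}}^2$, absorption of the good term $\langle z,\psi(X^1)-\psi(X^2)\rangle_{L^2}$, multiplier bounds using hypothesis ii), localization along $\varsigma^\ell$, Gronwall, and Fatou. The only difference is presentational: you introduce the difference quotient $a$ explicitly and phrase the argument as an infinite-mode extension of Theorem~\ref{T51}, whereas the paper keeps $\psi(X^1)-\psi(X^2)$ intact and uses the algebraically equivalent coercivity inequality of Remark~\ref{remB2}~ix).
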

\begin{rem}\label{remB2bis} 
We observe that condition ii) is compatible with (3.1) of \cite{BRR3}.
\end{rem}

\begin{rem}\label{remB2} Let $X$ be a solution of \eqref{PME}.
\begin{enumerate}[i)]
\item There is  a $P$ null set $N_0$, so that for $\o \not \in N_0,\;
 X(t,\;\cdot\;)\in L^2(\R)$ for almost all $t\in[0,T]$.
\item Condition \eqref{eFB1} also implies that
\begin{equation}\label{eFB2}
\int^T_0 \norm{X(s,\;\cdot\;)}^2_{H^{-1}} \d s <\9 \quad \text{a.s.}
\end{equation}
\item Since $\psi$ is Lipschitz and $\psi(0) = 0$, \eqref{eFB1} implies that
$\int_0^T\Vert \psi(X(r,\cdot))\Vert_{L^2}^2 \ dr < \infty$. So  
$\int^t_0 \d s \psi (X(s,\cdot))$ is a Bochner integral with values in 
$L^2(\R)$.
\item Consequently, 
 $t\mapsto \Delta \( \int\limits^t_0 \d s \psi (X(s,\;\cdot\;))\)$
 is continuous from $[0,T]$ to  $H^{-2}(\R)$ and so also in
 $\shs'(\R)$;
since $e^i, 1 \le i \le N,$ are $H^{-1}$-multipliers, then
$t\mapsto \int\limits^t_0 \mu (\d s, \cdot) X(s,\cdot)$ belongs to $C\left([0,T]; H^{-1}(\R) \right)$;
since $x_0 \in \shs'(\R)$  and $X \in C([0,T]; \shs'(\R))$ a.s.,
 it follows that
for $\o$ not belonging to a null set, we have
\begin{equation}\label{eFB3}
X(t,\;\cdot\;)=x_0+\Delta \(\int\limits^t_0 \psi (X(s,\;\cdot\;))\d s \right)
 + \int\limits^t_0 \mu ( \d s , \cdot) X(s,\cdot) \quad t\in [0,T],
\end{equation}
as an identity in $\shs'(\R)$.
\item If $x_0 \in H^{-1}$ then $X\in C\([0,T];H^{-2}\)$, for $\o \not \in N_0$,
$N_0$ a $P$-null set.
 Moreover, if $x_0 \in L^2$ or $\psi$   is non-degenerate, then, 
by Theorem 3.4 of \cite{BRR3}, 
then  $t \mapsto  \int_0^t \psi(X(s,\cdot)) ds \in  C([0,T]; H^1(\R))$. 
\item If $x_0 \in H^{-s}$ for some $s \ge 2$ then $X\in C\([0,T];H^{-s}\)$, 
for $\o \not \in N_0$,
$N_0$ a $P$-null set.
\item Let $\varepsilon >0$ and consider a sequence of mollifiers $(\phi_\varepsilon)$ converging to the Dirac measure. Then $X^\varepsilon
 (t,\;\cdot\;) = X(t,\cdot) \star \phi_\varepsilon$ belongs a.s. 
to $C\([0,T];L^2(\R)\)$.
\item All the previous items hold provided there exists a solution $X$ verifying \eqref{eFB1}.
\item Since $\psi$ is Lipschitz, there is $\alpha>0$ such that
\begin{equation*}
\(\psi(r)-\psi(\bar r)\) (r-\bar r)\geq \alpha \( \psi (r)-\psi(\bar r)\)^2.
\end{equation*}
\end{enumerate}
\end{rem}

\begin{proof}[Proof]
Let $(\phi_\varepsilon, \varepsilon > 0)$ be a sequence of mollifiers as 
in Remark \ref{remB2} 
vii).
Let $X^1,X^2$ be two solutions of \eqref{PME}.
For $i = 1,2$,  we set 
$(X^i)^\varepsilon(t,\cdot) = X^i(t,\cdot) \star \phi_\varepsilon.$
We denote $X=X_1 - X_2$ and   $X^\varepsilon=(X^1)^\varepsilon-(X^2)^\varepsilon$
 which a.s. belongs to $C\([0,T],\;L^2(\R)\) \subset C\([0,T];\;H^{-1}\)$.
We expand
\begin{align*}
g_\varepsilon(t) := & \norm{X^\varepsilon(t,\;\cdot\;)}^2_{H^{-1}}\\
& = \int_\R \((I-\Delta)^{-1} X^\varepsilon (t,\;\cdot\;) \) (\xi) X^\varepsilon (t,\xi) \d \xi.
\end{align*}
\end{proof}
It\^o formula gives
\begin{align}\label{eFB4}
\tag{B.2}
g_\varepsilon(t) = & 2\int^t_s < X^\varepsilon (s,\;\cdot\;), X^\varepsilon (\d s,\;\cdot\;) >_{H^{-1}}\\
& + \sum^\9_{i=1} \int^t_s \d s \norm{(e^i X)(s,\cdot)
\star \phi_\varepsilon}^2_{H^{-1}}.
\end{align}
On the other hand we have
\begin{align}\label{eFB5}
\tag{B.3}
X^\varepsilon (t,\;\cdot\;) = & \int^t_0 \d s \Delta \left[ \left\{\psi
 (X^1(s,\;\cdot\;))-\psi (X^2(s,\;\cdot\;))\right\}\star \phi_\varepsilon \right]\\
& + \int^t_0 (\mu(\d s,\;\cdot\;)X)\star \phi_\varepsilon.
\end{align}
\begin{align}\label{eFB6}
\tag{B.4}
(I-\Delta)^{-1} X^\varepsilon (t,\;\cdot\;) & = -\int^t_0 \(\psi(X^1(s,\;\cdot\;))-\psi(X^2(s,\;\cdot\;))\)\star \phi_\varepsilon\\
& + \int^t_0 \d s(I-\Delta)^{-1}\(\psi(X^1(s,\;\cdot\;))-\psi(X^2(s,\;\cdot\;))\)\star \phi_\varepsilon\\
& + \sum^\9_{i=0} \int^t_0 \d W^i_s \left[(I-\Delta)^{-1}(e^i X(s,\;\cdot\;))\right]\star \phi_\varepsilon.
\end{align}
We define 
\begin{equation*}
M_t = \sum^\9_{i=1} \int^t_0 < (I-\Delta)^{-1} X (s,\;\cdot\;),
 e^i X(s,\;\cdot\;) >_{L^2} dW^i_s.
\end{equation*}
We observe that previous $M$ is well-defined and it is
a local martingale. Indeed, by Remark \ref{remB2} iv), 
$X \in C([0,T];H^{-2})$, so by   similar arguments as in \eqref{e5.6bis},
\begin{eqnarray} \label{EFB100}
 \sum^\9_{i=1} \int^t_0 < (I-\Delta)^{-1} X (s,\;\cdot\;),
 e^i X(s,\;\cdot\;) >_{L^2}^2 ds &\le& 
\sup_{s \in [0,T]} \Vert X(s,\cdot) \Vert_{H^{-2}}^2 
\sum_{i = 1}^\infty  \Vert e^i\Vert_\infty^2 \\
&&\int_0^T \Vert X (s,\;\cdot\;)\Vert_{L^2}^2  \d s < \infty.
\end{eqnarray}
Using \eqref{eFB4}, \eqref{eFB5} and \eqref{eFB6} we get
\begin{align}\label{eFB7}
\tag{B.5}
g_\varepsilon (t) = & \sum^\9_{i=1}\int^t_0 \d s \norm{(e^i X)(s,\cdot)\star
 \phi_\varepsilon}^2_{H^{-1}}\\
& - 2\int^t_0 < X^\varepsilon (s,\;\cdot\;), \left[\psi (X^1(s,\;\cdot\;))-\psi 
(X^2(s,\;\cdot\;))\right]\star \phi_\varepsilon >_{L^2}\\
& + 2\int^t_0 < X^\varepsilon (s,\;\cdot\;), (I-\Delta)^{-1}\left[\psi
 (X^1(s,\;\cdot\;))-\psi (X^2(s,\;\cdot\;))\right]\star \phi_\varepsilon >_{L^2}\\
& + 2\int^t_0 < X^\varepsilon (s,\;\cdot\;), (I-\Delta)^{-1} \left[e^0 
X(s,\;\cdot\;)\right]\star \phi_\varepsilon >_{L^2}\\
& + M_t^\varepsilon,
\end{align}
where $M^\varepsilon$ is the local martingale defined by
\begin{equation*}
M^\varepsilon_t = \sum^\9_{i=1} \int^t_0 < X^\varepsilon (s,\;\cdot\;), 
(I-\Delta)^{-1}| \(e^i X(s,\;\cdot\;)\) \star \phi_\varepsilon >_{L^2} dW^i_s,
\end{equation*}
which is again well-defined by similar arguments
as for the proof of \eqref{EFB100}.
Taking into account \eqref{eFB1} and the Lipschitz property for $\psi$, we can take the limit when $\varepsilon \to 0$ in \eqref{eFB7} and for $g(t):=\norm{X(t,\;\cdot\;)}^2_{H^{-1}}$, we obtain
\begin{align}\label{eFB8}
\tag{B.6}
g(t) & + 2 \int^t_0 \d s \< X(s,\;\cdot\;),\; \psi \(  X^1(s,\;\cdot\;)\)-\psi \(  X^2(s,\;\cdot\;)\)\>_{L^2}\\
= & \sum^\9_{i=1} \int^t_0 \d s \norm{e^i X(s,\;\cdot\;)}^2_{H^{-1}}\\
& + 2 \int^t_0 \d s \< (I-\Delta)^{-1}X(s,\;\cdot\;),\; \psi \(  X^1(s,\;\cdot\;)\)-\psi \(  X^2(s,\;\cdot\;)\)\>_{L^2}\\
& + 2 \int^t_0 \d s \< X(s,\;\cdot\;)\;e^0 X(s,\;\cdot\;)\>_{H^{-1}} + M_t.
\end{align}
The convergence $M^\varepsilon \to M$ when $\varepsilon \to 0$ holds ucp since
\begin{equation*}
\sum^\9_{i=1} \int^t_0 \d s \big | \< (X^\varepsilon -X)(s,\;\cdot\;),\;
 e^i X(s,\;\cdot\;)\>_{H^{-1}} \big |  ^2 \substack{\longrightarrow\\
 \varepsilon \to 0} 0
\end{equation*}
because of property ii).\\
We take into account the inequality
\begin{equation*}
2ab \leq \frac{a^2}{\alpha}+b^2\alpha,
\end{equation*}
for $a,b\in \R$, $\alpha$ being the constant
 appearing at item vii) of Remark \ref{remB2}. Then the second term 
of the right-hand side of equality \eqref{eFB8} is bounded by
\begin{align*}
& \alpha^2 \int^t_0 \d s \norm{(I-\Delta)^{-1}X(s,\;\cdot\;)}^2_{L^2} + \frac{1}{\alpha^2}\int^t_0 \(\psi \(  X^1(s,\;\cdot\;)\)-\psi \(  X^2(s,\;\cdot\;)\)\)^2_{L^2} \d s\\
\leq & \alpha^2 \int^t_0 \d s \norm{X(s,\;\cdot\;)}^2_{L^2} + \int^t_0 \d s \< \psi \(  X^1(s,\;\cdot\;)\)-\psi \(  X^2(s,\;\cdot\;)\),\; X(s,\;\cdot\;)\>_{L^2}.
\end{align*}
This together with \eqref{eFB8} gives $\pas$
\begin{align}\label{eFB9}
g(t) & +\int^t_0 \d s\< X(s,\;\cdot\;),\;\psi \(  X^1(s,\;\cdot\;)\)-\psi \(  X^2(s,\;\cdot\;)\)\>_{L^2}\\
\leq & 2 \int^t_0 \d s \< X(s,\;\cdot\;)\;e^0 X(s,\;\cdot\;)\>_{H^{-1}}\\
& + \sum^\9_{i=1}\int^t_0 \d s \norm{(e^i X)(s, \cdot)}^2_{H^{-1}}+M_t.\\
\end{align}
Since $e^i,\; i\in \N$ are $H^{-1}$-multipliers and taking into 
account Hypothesis ii), we get
\begin{equation}\label{eFB10}
\tag{B.7}
g(t)\leq M_t + (2 + \sum_{i = 1}^\infty \shc(e_i)^2) \int^t_0 \d s g(s).
\end{equation}
We proceed now by localization.
Let $(\varsigma^\ell)$ be a sequence of stopping times defined by
\begin{equation*}
\varsigma^\ell = \inf \{t\in[0,T]|\int^t_0 \d s \norm{X(s,\;\cdot\;)}^2_{L^2}
 \geq \ell, \Vert X(s,\cdot)\Vert_{H^{-2}} \ge \ell  \},
\end{equation*}
with the convention that $\varsigma^\ell =\9$ if $\{ \;\}=\emptyset$.
Since $\int^T_0\norm{X(s,\;\cdot\;)}^2_{H^{-1}} \d s <\9$ a.s. we have $\O=\bigcup\limits^\9_{\ell=1}\{\varsigma^\ell >T\}$ up to a null set. Clearly the stopped processes $M^\varsigma_\ell$ are martingales starting from zero. We evaluate \eqref{eFB10} at $t\wedge\varsigma^\ell$ and we take the expectation wich gives
\begin{equation*}
E\(g(t\wedge\varsigma_\ell)\)\leq \shc E\(\int_0^{t\wedge\varsigma_\ell} g(s) \d s\).
\end{equation*}
Since $\int\limits_0^{t\wedge\varsigma_\ell} g(s) \d s
\leq \int_0^{\varsigma_\ell} g(s) \d s\leq \ell,\; E\(g(t\wedge\varsigma_\ell)\)$ is finite for every $\ell>0$. Consequently
\begin{equation*}
E\(g(t\wedge\varsigma_\ell)\)\leq \shc \int^t_0 \d s E\(g(s\wedge\varsigma_\ell)\)
\end{equation*}
and by Gronwall lemma it follows
\begin{equation*}
E\(g(t\wedge\varsigma_\ell)\)=0\quad \forall \; \ell \in \N.
\end{equation*}
By \eqref{eFB8} $g$ is a.s. continuous. On the other hand, for every $t\in[0,T],\; \lim\limits_{\ell \to 0} t\wedge\varsigma_\ell =t$ implies that
\begin{align*}
E\(g(t)\)& =E\(\liminf_{\ell \to 0} g(t\wedge\varsigma_\ell)\)\\
& \le \liminf_{\ell \to \infty} E\(g(t\wedge\varsigma_\ell)\)=0
\end{align*}
by Fatou's lemma. This concludes the proof.
\end{appendix}
\bigskip

{\bf ACKNOWLEDGEMENTS} 
\noindent

Financial support through the SFB 701 at Bielefeld University and
NSF-Grant 0606615
is gratefully acknowledged. 
 The third named author was partially supported
by the ANR Project MASTERIE 2010 BLAN 0121 01.
Part of this work was written during a stay of the first
and third named authors at
the Bernoulli Center (EPFL Lausanne). 

\addcontentsline{toc}{section}{References}
\bibliographystyle{plain}
\bibliography{BRR_Bibliography}

\end{document}